	\newcommand{\fk}{\mathfrak{k}}
\newcommand{\fkk}{\fk_\kappa}
\newcommand{\sU}{U}
\newcommand{\HCh}{\on{HCh}}
\newcommand{\HChl}{\on{HCh}^{\on{tame}}}
\newcommand{\per}{\Pos_{\hspace{.2mm} \kappa}}
\newcommand{\hk}{\widehat{\fh}_\kappa}
\newcommand{\Pos}{\on{Pos}}
\newcommand{\PosO}{\fh[\hspace{-.2mm}[z]\hspace{-.2mm}]\mod^{K, \Lambda, \heartsuit}}
\newcommand{\sS}{\mathscr{S}}
\newcommand{\HK}{\sH}
\newcommand{\sH}{\mathscr{H}}
\newcommand{\St}{\on{s}}
\newcommand{\Hom}{\operatorname{Hom}}
\newcommand{\sN}{\mathscr{N}}
\newcommand{\g}{\widehat{\mathfrak{g}}}
\newcommand{\fl}{\mathfrak{l}}
\newcommand{\halpha}{\check{\alpha}}
\newcommand{\fsl}{\mathfrak{sl}}
\newcommand{\ft}{\mathfrak{t}}
\newcommand{\id}{\text{id}}
\newcommand{\fg}{\mathfrak{g}}
\newcommand{\comment}[1]{}
\newcommand{\OO}{\mathscr{O}}
\newcommand{\fb}{\mathfrak{b}}
\newcommand{\gk}{\g_\kappa}
\newcommand{\fn}{\mathfrak{n}}
\newcommand{\sD}{\mathscr{D}}
\newcommand{\on}{\operatorname}
\newcommand{\scc}{\mathscr{C}}
\newtheorem{theo}[subsubsection]{Theorem}
\newtheorem*{theo*}{Theorem}
\newtheorem{lem}[subsubsection]{Lemma}
\newtheorem{lemma}[subsubsection]{Lemma}
\newtheorem{cor}[subsubsection]{Corollary}
\newtheorem{pro}[subsubsection]{Proposition}
\theoremstyle{remark}
\newtheorem{defn}[subsubsection]{Definition}
\newtheorem{re}[subsubsection]{Remark}
\newtheorem{ex}[subsubsection]{Example}
\numberwithin{equation}{section}
\newcommand{\fh}{\mathfrak{h}}
\newcommand{\sC}{\mathscr{C}}
\newcommand{\sM}{\mathscr{M}}
\newcommand{\sHom}{\on{Hom}}
\newcommand{\inv}{\msf{inv}}
\newcommand{\msf}{\operatorname}
\newcommand{\sW}{\EuScript{W}}
\newcommand{\Vect}{\on{Vect}}
\renewcommand{\mod}{\operatorname{-mod}}
\newcommand{\DGCat}{\msf{DGCat}_{\on{cont}}}
\begin{document}


\title[]{Affine Harish-Chandra bimodules and  Steinberg--Whittaker localization}

\author{Justin Campbell} 
\address{ California Institute of Technology \\ Pasadena, CA 91125, USA}

\email{jcampbell@caltech.edu}

\author{Gurbir Dhillon}
\address{Yale University \\ New Haven, CT
06511, USA}

\email{gurbir.dhillon@yale.edu}

\date{\today}

\begin{abstract} We construct categories of Harish-Chandra bimodules for affine Lie algebras analogous to Harish-Chandra bimodules with infinitesimal characters for simple Lie algebras, addressing an old problem raised by I. Frenkel and Malikov. Under an integrality hypothesis, we establish monoidal equivalences between blocks of our affine Harish-Chandra bimodules and versions of the affine Hecke category. To do so, we introduce a new singular localization onto Whittaker flag manifolds. The argument for the latter  specializes to a somewhat alternative treatment of regular localization, and  identifies the category of Lie algebra representations with a generalized infinitesimal character as the parabolic induction, in the sense of categorical representation theory, of a Steinberg module.   \end{abstract}

\maketitle

\setcounter{tocdepth}{1}
\tableofcontents

\section{Introduction}

\subsection{} The study of Harish-Chandra bimodules with generalized infinitesimal characters has been of central importance in the representation theory of semisimple Lie algebras. As an early triumph, it identified categories of principal series representations of complex Lie groups with categories of highest weight representations of semisimple Lie algebras  \cite{dixmier}, \cite{duflo}, \cite{joseph}, \cite{begel}, and thereafter with Hecke categories and Soergel bimodules \cite{beilinson-bernstein}, \cite{soergelhc}, \cite{beilinsonginzburg}. 

The representation theory of affine Lie algebras is by now a mature subject, with well understood theories of highest weight representations, affine Hecke categories, and affine Soergel bimodules. However, a corresponding theory of affine Harish-Chandra bimodules has been elusive. This absence has been felt by multiple groups of experts for at least a quarter century, notably in the work and conjectures of I. Frenkel and Malikov \cite{frenkelmalikov1}, \cite{frenkelmalikov2}. 

\subsection{} In this paper we introduce such a category of affine Harish-Chandra bimodules, and in the process exhibit a partial substitute for infinitesimal characters for Kac--Moody Lie algebras. To show our category of bimodules has the sought-for properties, we provide a new type of singular localization onto Whittaker flag manifolds. Our method of proof for localization, which uses techniques from categorical representation theory, lifts the fundamental identification of the Grothendieck group of a block of Category $\OO$ with an induced representation of the Hecke algebra to an equivalence of the block itself with an induced representation of the Hecke category.

In the remainder of the introduction, we describe this all in more detail; after giving the definition  of the category of bimodules considered in Section \ref{ss:introbimods}, we then discuss our main Theorems \ref{t:hch=hecke} and \ref{t:locintro}.

\subsection{Affine Harish-Chandra bimodules}
\label{ss:introbimods}
\subsubsection{} There are two basic difficulties in giving an analogue of Harish-Chandra bimodules with generalized infinitesimal characters in the affine setting. We now review these, and explain the approach taken in the present work to addressing them.

\subsubsection{} The first problem is to give a good notion of a general affine Harish-Chandra bimodule. To see the difficulty,  fix an almost simple, simply connected algebraic group $G$ with Lie algebra $\fg$. Recall that a Harish-Chandra bimodule is a $\fg \oplus \fg$-module on which the diagonal action of $\fg$ integrates to an action of $G$. 

Let us now pass to the loop group $G(\!(z)\!)$, whose Lie algebra is the loop algebra $\fg(\!(z)\!)$. One may again consider the abelian category of smooth $\fg(\!(z)\!) \oplus \fg(\!(z)\!)$-modules on which the diagonal action is integrable. However, as $G(\!(z)\!)$ has no nontrivial smooth representations, a short argument shows that this implies that all of $\fg(\!(z)\!) \oplus \fg(\!(z)\!)$ acts trivially. So the obtained category is equivalent to vector spaces, and is too small to be of use. 
 
When one passes from the loop algebra to an affine Lie algebra $\gk$, i.e. one introduces the central charge of level $\kappa$, problems of a similar nature arise. 
 
\subsubsection{} To obtain more objects, we follow a  proposal of Gaitsgory \cite{quantum-langlands-summary}. Namely, if one tries to produce more objects by passing to the derived category of bimodules, one encounters the same degeneracy as in the abelian case. However, E. Frenkel and Gaitsgory introduced in \cite{fg} a certain enlargment of the derived category, which we denote below by %
\begin{equation} \label{e:bigcat}
     \fg(\!(z)\!) \oplus \fg(\!(z)\!)\mod.
\end{equation}
This dg-category, sometimes called the {\em renormalized} derived category, contains the usual unbounded derived category as a full subcategory, and carries a $t$-structure for which the bounded below objects agree with the usual bounded below derived category. Said informally, one adjoins objects concentrated in cohomological degree $-\infty$. The role of such objects and such a category had been long anticipated in the rich folklore around semi-infinite mathematics, with important contributions by Arkhipov, Feigin, Finkelberg, E. Frenkel,  Positselski, Voronov, and others; see \cite{vor}, \cite{pos} for a partial indication.\footnote{As a striking example, the reader may wish to compare Voronov's final remarks in \cite{vor} with Krause's renormalization of the category of quasi-coherent sheaves on a Noetherian scheme \cite{krause}, i.e. the category of ind-coherent sheaves \cite{indcoh}.}  

 One then takes the category of affine Harish-Chandra bimodules to be the $G(\!(z)\!)$-equivariant objects in the enlarged category \eqref{e:bigcat}. To make sense of this requires some categorical representation theory of groups. Briefly,  the diagonal adjoint action of $G(\!(z)\!)$ on $\fg(\!(z)\!) \oplus \fg(\!(z)\!)$ induces an action of the category of D-modules $D( G(\!(z)\!))$, viewed as a monoidal dg-category under convolution, on \eqref{e:bigcat}. A general procedure then attaches to any cocomplete dg-category $\sC$ equipped with an action of $D(G(\!(z)\!))$ its category of equivariant objects. If one runs the above instead for the finite dimensional group $G$, this recovers the usual (derived) category of Harish-Chandra bimodules.

\subsubsection{} At the time of Gaitsgory's proposal, the formalism for passing to equivariant objects was not yet available. Plainly, one would like to compute such a category as the totalization of a cobar resolution akin to the simplicial presentation of the equivariant derived category of sheaves on a quotient stack. However, this uses inverse limits of categories, which cannot be sensibly defined working at the triangulated level. The requisite categorical representation theory has been developed over the ensuing fifteen years by Gaitsgory, with significant contributions by Beraldo, Ben-Zvi--Nadler, and Raskin, and building on earlier work of Beilinson and Drinfeld \cite{bdh}, \cite{fg2}, \cite{bznadler}, \cite{beraldo}, \cite{whit}, \cite{mys}, \cite{whitlocglob}. 

The above category of affine Harish-Chandra bimodules, and its analogues at other central charges, figures prominently in the main conjectures of the local quantum Langlands correspondence, cf. \cite{quantum-langlands-summary}, \cite{winter-langlands-summary}. To our knowledge the present work is the first to undertake its study.

\subsubsection{} We now turn to the second basic difficulty. Recall that in finite type to obtain categories of Harish-Chandra bimodules equivalent to Hecke categories and categories of highest weight representations, one  fixes a strict or generalized infinitesimal character for the underlying $\fg\oplus\fg$-module.


This forces us to confront a fundamental issue in the representation theory of affine algebras, which experts have contended with since the early days of the subject. Namely, the center of the completed enveloping algebra $\sU(\hspace{.5mm}\gk)$ is {trivial} for all levels $\kappa$ excepting the critical level $\kappa_c$. Nonetheless, it has long been apparent that its category of modules behaves in certain respects as though there is a nontrivial center. Indeed, in the literature one finds various categories of representations whose linkage principles all take the same form; see \cite{dgk}, \cite{lianzuckerman}, \cite{yakimov}, \cite{adamovic}. Namely, each time at a noncritical level $\kappa$ they are indexed by orbits in the dual abstract Cartan $\ft^*$ of the level $\kappa$ dot action of the affine Weyl group.

So, we must look for an alternative way to isolate, for any noncritical level $\kappa$ and affine Weyl group orbit $\chi$, a full subcategory of modules
\begin{equation} \label{e:subcatl}
     \gk\mod_\chi \subset \gk\mod,
\end{equation}
which acts as though it were the category of modules with `generalized infinitesimal character $\chi$'. 

\subsubsection{} Our approach to this second problem, which is perhaps the basic new idea of the present work, is again via categorical representation theory. 

Recall that, given an algebra $A$ and a module $M$, from any element $m$ of $M$ one can form the $A$-submodule  generated by $m$. Similarly, given a monoidal category and a module category, to any object of the latter one can associate the submodule generated by it.

In this way, we {define} \eqref{e:subcatl} to be the $D_\kappa(G(\!(z)\!))$-submodule of $\gk\mod$ generated by the Verma module of highest weight $\lambda$, for any element $\lambda$ of the orbit $\chi$; the resulting submodule is independent of the choice. If one runs this definition for the finite dimensional group $G$, it recovers the usual derived category of $\fg$-modules with generalized infinitesimal character that of the corresponding Verma module. 
So, {\em we use the categorical action to bypass the absence of infinitesimal characters. }

Combining this and the construction \eqref{e:bigcat}, we obtain for appropriate pairs of weights $\chi, \phi$ and levels $\kappa, \kappa_\circ$ the desired categories of affine Harish-Chandra bimodules.\footnote{The term `appropriate' here means that, just as in finite type, the difference between $(\chi, \kappa)$ and $(\phi, \kappa_\circ)$ should be integral (or the categories would vanish). } Namely, we again pass to $G(\!(z)\!)$-equivariant objects to obtain the submodule
\begin{equation} \label{e:hchwts}
    \on{HCh}_{\chi, \phi} := (\gk\mod_\chi \otimes \hspace{.5mm} \widehat{\fg}_{\kappa_\circ}\mod_\phi)^{G(\!(z)\!)} \subset \gk \oplus \widehat{\fg}_{\kappa_\circ}\mod^{G(\!(z)\!)}. 
\end{equation}

\subsubsection{} We spend the majority of the paper verifying that the categories of Harish-Chandra bimodules \eqref{e:hchwts} introduced above behave in the desired ways. This is not completely transparent, since in the most important cases every single object of $\on{HCh}_{\chi,\phi}$ is a homological phantom, i.e. is concentrated in cohomological degree $-\infty$.

\subsubsection{} Before discussing the bimodules, let us briefly describe some features of the categories of $\gk$-modules appearing in \eqref{e:subcatl}. As $\chi$ runs through the set of affine Weyl group orbits, the subcategories \eqref{e:subcatl} contain all parabolic inductions of modules with infinitesimal characters from proper Levi factors of $\gk$. Moreover, they satisfy the linkage principle 
\begin{equation} \label{e:incgmodchi}
    \underset{\chi} \oplus \hspace{1mm} \gk\mod_\chi \subset \gk\mod,
\end{equation}
i.e. for distinct orbits $\chi_1, \chi_2$ the categories $\gk\mod_{\chi_1}$ and $\gk\mod_{\chi_2}$ are left and right orthogonal to one another. Combining these points confirms a linkage principle for positive energy representations predicted by Yakimov \cite{yakimov}, cf. Section \ref{s:linky} for more details.  

If $\kappa$ is an integral level, under the (conjectural) local geometric Langlands correspondence, one can associate to $\gk\mod$ a sheaf of categories $\sS$ on the moduli space of $\check{G}$-local systems on the punctured disk, where $\check{G}$ denotes the Langlands dual of $G$. The inclusion \eqref{e:incgmodchi} should correspond to the subcategories of $\sS$ supported on formal neighborhoods of local systems with regular singularities. In particular, the inclusion \eqref{e:incgmodchi} does not see the wildly ramified part of $\gk\mod$. However, for our purposes, namely to have the sought-for relations with affine Hecke categories, this is a necessary feature, and not a bug. We turn to these relations next.

\subsection{Equivalences with affine Hecke categories}

\subsubsection{} In the next theorem  we identify our categories of bimodules \eqref{e:hchwts} with variants of the affine Hecke category under an integrality assumption. This result strongly suggests that our definition is the correct one. 

 To state it, we need some notation. Let $\kappa, \kappa_\circ$ be noncritical integral levels. At such levels, the dot action of the affine Weyl group preserves the weight lattice. Denote by $\chi$ and $\phi$ a pair of orbits of weights at levels $\kappa$ and $\kappa_\circ$, respectively. Fix a pinning of the loop group, and in particular an Iwahori subgroup 
\[
   I \subset G(\!(z)\!),
\]
whose prounipotent radical we denote by $\mathring{I} \subset I$. By a standard procedure, one may then associate to $\chi$ and $\phi$ character sheaves on $\mathring{I}$, which we denote by $\psi$ and $\omega$, respectively. Loosely, as the orbit $\chi$ grows more singular the character $\psi$ grows more nondegenerate, cf. Section \ref{ss:parindst} for the details.  

We also need one more definition, which may be new. For a character sheaf $\psi$ as above, consider the corresponding category of Iwahori--Whittaker D-modules on the loop group  
\begin{equation} \label{e:iwcat}
       D\big(G(\!(z)\!) /\mathring{I}, -\psi\big). 
\end{equation}
This is a naturally a $D(G(\!(z)\!))$-module under left convolution. We may therefore consider the submodule generated by the Iwahori--equivariant objects. This is a categorical analogue of the (degenerate) Steinberg module of a finite group of Lie type, and is a colocalization of \eqref{e:iwcat}, i.e. its inclusion admits a right adjoint. 

Since \eqref{e:iwcat} corepresents the functor of taking the Iwahori--Whittaker equivariant objects $\sC^{\mathring{I}, \psi}$ of a categorical representation $\sC$ of the loop group, its Steinberg submodule determines a corresponding full subcategory of {\em Steinberg--Whittaker} equivariant objects 
\[
   \sC^{\mathring{I}, \psi, \St} \subset \sC^{\mathring{I}, \psi}. 
\]
For example, if $\psi$ is the trivial character sheaf, the Steinberg--Whittaker invariants are the Iwahori monodromic objects of $\sC$. Finally, we note that the Steinberg--Whittaker invariants are also introduced in forthcoming work of Arinkin and Bezrukavnikov. 

With these preparations, we may state our basic theorem on affine Harish-Chandra bimodules.

\begin{theo}\label{t:hch=hecke} Let $\chi, \phi$ and $\psi, \omega$ be as above. There are canonical equivalences
\[
   \on{HCh}_{\chi, \phi} \simeq D\big(\hspace{.3mm}(\mathring{I}, \psi, \St \hspace{0mm})\backslash \hspace{.1mm}G(\!(z)\!) \hspace{.1mm}/ \hspace{.3mm}(\mathring{I}, -\omega, \St) \hspace{.3mm}\big) \simeq \widehat{\fg}_{\kappa_\circ}\mod_\phi^{\mathring{I}, \psi}. 
\]
\end{theo}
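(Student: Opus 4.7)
The plan is to reduce both equivalences to the Steinberg--Whittaker Localization Theorem \ref{t:locintro}, after which a short manipulation in categorical representation theory yields the result.

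First, I would use the fact that for any pair of dg-categories $\sC, \sD$ equipped with commuting actions of $D(G(\!(z)\!))$, the formation of diagonally equivariant objects is computed by a relative tensor product, so that
\[
\on{HCh}_{\chi,\phi} \simeq \gk\mod_\chi \underset{D(G(\!(z)\!))}{\otimes} \widehat{\fg}_{\kappa_\circ}\mod_\phi.
\]
By Theorem \ref{t:locintro} applied to $\gk\mod_\chi$, with the character sheaf $\psi$ attached to $\chi$ as in Section \ref{ss:parindst}, there is a $D(G(\!(z)\!))$-linear equivalence
\[
\gk\mod_\chi \simeq D\big((\ur,\psi,\St)\backslash G(\!(z)\!)\big)
\]
realizing $\gk\mod_\chi$ as the parabolic induction of the Steinberg--Whittaker module. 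Plugging this into the relative tensor product and using the defining property of Steinberg--Whittaker invariants gives
\[
\on{HCh}_{\chi,\phi} \simeq \widehat{\fg}_{\kappa_\circ}\mod_\phi^{\ur,\psi,\St}.
\]

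To pass to the right-hand equivalence of the theorem, which drops the $\St$ label, I would check that on the block $\widehat{\fg}_{\kappa_\circ}\mod_\phi$ every Iwahori--Whittaker object is automatically Steinberg. This is because $\widehat{\fg}_{\kappa_\circ}\mod_\phi$ is by definition the $D(G(\!(z)\!))$-submodule generated by a Verma module, which is Iwahori monodromic, hence Steinberg; since convolution with the loop group preserves the Steinberg subcategory of any Iwahori--Whittaker invariants, the Steinberg condition propagates to all of $\widehat{\fg}_{\kappa_\circ}\mod_\phi^{\ur,\psi}$.

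For the middle term, I would apply Theorem \ref{t:locintro} a second time, now to $\widehat{\fg}_{\kappa_\circ}\mod_\phi$, obtaining a $D(G(\!(z)\!))$-linear equivalence
\[
\widehat{\fg}_{\kappa_\circ}\mod_\phi \simeq D\big(G(\!(z)\!)/(\ur,-\omega,\St)\big).
\]
Substituting this into the tensor product computation then produces
\[
\on{HCh}_{\chi,\phi} \simeq D\big((\ur,\psi,\St)\backslash G(\!(z)\!)/(\ur,-\omega,\St)\big),
\]
completing the chain of equivalences.

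The real obstacle, of course, is the Steinberg--Whittaker Localization Theorem \ref{t:locintro} itself, whose proof will occupy the technical heart of the paper: one must construct and analyze a singular Whittaker localization at noncritical integral level, together with its right adjoint, and show it induces an equivalence onto the parabolic induction of Steinberg--Whittaker objects, all in a setting where in the most singular cases every object of $\gk\mod_\chi$ lives in cohomological degree $-\infty$. Granted \ref{t:locintro}, the only other point requiring care is the automatic Steinberg property on Verma-generated blocks, which is as sketched above.
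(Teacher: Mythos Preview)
Your overall strategy—deduce Theorem \ref{t:hch=hecke} formally from Theorem \ref{t:locintro} by substituting the localization into the definition of $\HCh_{\chi,\phi}$—matches the paper's. The difference lies in your very first step, and it is not innocuous.

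You assert that ``the formation of diagonally equivariant objects is computed by a relative tensor product,'' i.e.\ that
\[
(\gk\mod_\chi \otimes \widehat{\fg}_{\kappa_\circ}\mod_\phi)^{G(\!(z)\!)} \;\simeq\; \gk\mod_\chi \underset{D(G(\!(z)\!))}{\otimes} \widehat{\fg}_{\kappa_\circ}\mod_\phi.
\]
This is the statement that strong invariants and coinvariants for the loop group agree. The paper only records this for \emph{compact open} subgroups (Section \ref{s:inv2}); for the full loop group it is a genuinely deep theorem, not a piece of abstract nonsense, and it is not invoked anywhere in the paper. The paper instead uses the elementary identification \eqref{e:homscats}
\[
(\sC \otimes \sD)^{G(\!(z)\!)} \;\simeq\; \Hom_{D(G(\!(z)\!))\mod}(\sC^\vee,\sD),
\]
valid as soon as $\sC$ is dualizable in $\DGCat$, together with the explicit duality $\gk\mod_\chi^\vee \simeq \widehat{\fg}_{-\kappa+2\kappa_c}\mod_{-\chi-2\rho}$ of Proposition \ref{p:gmodduals}. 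One then substitutes Theorem \ref{t:3realz} and Corollary \ref{c:3realz} into the two arguments of $\Hom$, and the Whittaker invariants fall out by the very definition of Steinberg--Whittaker invariants as a $\Hom$ out of $D(G/N,\psi,\St)$. This route requires nothing beyond dualizability.

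Your argument for dropping the Steinberg label is correct in spirit; in the paper it is exactly the observation \eqref{e:homstinvs}, which uses that $\widehat{\fg}_{\kappa_\circ}\mod_\phi$ is generated by its Iwahori invariants together with Corollary \ref{c:pindgben} (or equivalently Remark \ref{r:whitbinvs}). So the only real issue is the first step: replace your tensor-product rewriting by the $\Hom$-via-duality rewriting and the argument goes through cleanly.
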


We now make a few remarks about Theorem \ref{t:hch=hecke}. 
\begin{re} The analogues of the above equivalences for the finite dimensional group $G$ are fundamental results, due in various amounts of generality to Bernstein--Gelfand, Beilinson--Bernstein, Milicic--Soergel, and more recently Ginzburg, Webster, and Losev \cite{begel}, \cite{beilinson-bernstein}, \cite{ms}, \cite{ginz}, \cite{bw}, \cite{los}. However, as far as we are aware, in the case of $\chi$ and $\phi$ both singular, it was only a folklore expectation that there should be a realization as appropriate bi-Whittaker sheaves.

For context, we note that the analogous categories of bi--Whittaker sheaves of general Kac--Moody groups should be Koszul dual to bi-parabolic Hecke categories, and admit equivalences with singular Soergel bimodules. 

We should also mention that in the sheaf--theoretic realization, the Steinberg condition on the Whittaker invariants appearing on one side in fact implies it holds on the other. For this reason, in the literature one does not find the Steinberg--Whittaker invariants explicitly introduced except in the case of the trivial character.

Finally, we emphasize that, to us, the main point of Theorem \ref{t:hch=hecke} is that our category of affine Harish-Chandra bimodules is really the  sought-for one, i.e., it behaves like its finite dimensional counterpart. Unfortunately, to prove that this is so, we cannot mimic the existing arguments. Namely, much of the above work relies on favorable properties of Harish-Chandra bimodules with infinitesimal characters. We only obtain their affine analogues {\em a posteriori} as consequences of the theorem.

\end{re}  

\begin{re} The bimodule and sheaf-theoretic categories appearing above have natural convolution structures. Namely, given affine Harish-Chandra bimodules $\sM_1$ and $\sM_2$ at appropriate levels, one may compose them by forming the semi-infinite homology 
\[
     \sM_1 \underset{\widehat{\fg}}{\overset{ \frac{\infty}{2}}{\otimes}}\hspace{.5mm} \sM_2.  
\]
By construction the equivalence of Theorem \ref{t:hch=hecke} exchanges this with the convolution of the corresponding D-modules on the loop group, cf. Remark \ref{r:monoidaleq} for the details.  \end{re}

\begin{re} We also introduce the analogues of categories with {strict} infinitesimal characters. To see how, recall that in finite type, the category $\fg\mod_\lambda$ is acted on by a completion of $Z(\fg)$, i.e. the log of monodromy. We extend this to affine type by recognizing this as the {\em equivariant Hochschild cohomology} of $\fg\mod_\lambda$. We may then set the monodromy to zero, cf. Section \ref{s:someapps} for the details.
\end{re}

\subsection{Theorem \ref{t:hch=hecke} via Steinberg--Whittaker localization}

\subsubsection{} The idea of the proof of Theorem \ref{t:hch=hecke} is simple. First, some abstract nonsense: if $\sC$ and $\sD$ are categorical loop group representations, such that $\sC$ is {\em dualizable} as an abstract cocomplete dg-category, i.e. as an object of $\DGCat$, we have canonical identifications 
\begin{equation} \label{e:homscats}
     \on{Hom}_{D(G(\!(z)\!))\mod}(\sC, \sD) \simeq \on{Hom}_{\DGCat}(\sC, \sD)^{G(\!(z)\!)} \simeq (\sC^\vee \otimes \sD)^{G(\!(z)\!)},
\end{equation}
cf. Section \ref{s:pn} below for the precise definitions of $D(G(\!(z)\!))\mod$ and $\DGCat$.

We apply this as follows. For Kac--Moody representations, the operation of taking semi-infinite cohomology yields a canonical duality 
\begin{align} 
    & \gk\mod^\vee \simeq \widehat{\fg}_{-\kappa + 2\kappa_c}\mod, 
\intertext{where as before $\kappa_c$ denotes the critical level. Writing $\rho$ for the half sum of the positive roots of $\fg$, this restricts, for any $\lambda \in \ft^*$, to a duality} 
  \label{e:homsgmodl} & \gk\mod_\lambda^\vee \simeq \widehat{\fg}_{-\kappa + 2 \kappa_c}\mod_{-\lambda - 2 \rho}. 
\end{align}
In particular, by combining \eqref{e:homscats} and \eqref{e:homsgmodl}, we recognize our categories of Harish-Chandra bimodules as spaces of intertwining operators between categorical representations.

With this, Theorem \ref{t:hch=hecke} follows formally from rewriting our categories of Kac--Moody representations as categories of D-modules. We do so in the following theorem, which we call {\em Steinberg--Whittaker localization}. 

\begin{theo}\label{t:locintro} Let $\chi, \kappa,$ and $\psi$ be as in Theorem \ref{t:hch=hecke}. Then there is a canonical $D(G(\!(z)\!))$-equivariant equivalence 
\[
    \gk\mod_\chi \simeq D\big( G(\!(z)\!)/ \mathring{I}, \psi, \St \hspace{-.5mm}\big).  
\]

\end{theo}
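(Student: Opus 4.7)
The plan is to use categorical representation theory, since both sides of the equivalence are $D(G(\!(z)\!))$-module categories defined as submodules generated by a distinguished object: the Verma module $\V^\lambda$ for $\lambda \in \chi$ on the Kac--Moody side, and the Iwahori-equivariant objects on the D-module side. So it suffices to build a $D(G(\!(z)\!))$-equivariant functor matching the generators, and then verify fully faithfulness. Concretely, I would construct a localization/global-sections functor
\[
\Gamma^\psi_\kappa : D_\kappa\big(G(\!(z)\!)/\mathring{I}, \psi\big) \to \gk\mod,
\]
with $\psi$ chosen to correspond to $\lambda \in \chi$, as an Iwahori--Whittaker variant of affine Beilinson--Bernstein, building on the affine localization machinery of Frenkel--Gaitsgory and Raskin. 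One checks that the standard (clean) Iwahori-equivariant object on the base Iwahori orbit maps to $\V^\lambda$, which restricts the above to a functor $D\big(G(\!(z)\!)/\mathring{I}, \psi, \St\big) \to \gk\mod_\chi$ between the generated submodules.

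To verify this is an equivalence, I would follow the parabolic induction strategy highlighted in the introduction. Let $L \subset G$ be the (affine) Levi corresponding to the stabilizer of $\lambda$ in $W_{\on{aff}}$ under the dot action, with parabolic $P = LU$. The aim is to realize both sides as a common parabolic induction from $L$:
\[
\gk\mod_\chi \simeq \on{Ind}^{G(\!(z)\!)}_{P(\!(z)\!)}\big(\widehat{\fl}_\kappa\mod_{\lambda, \on{reg}}\big),
\]
\[
D\big(G(\!(z)\!)/\mathring{I},\psi,\St\big) \simeq \on{Ind}^{G(\!(z)\!)}_{P(\!(z)\!)}\big(D(L(\!(z)\!)/\mathring{I}_L,\St)\big),
\]
and then invoke the regular affine Beilinson--Bernstein equivalence for $L$. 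The Kac--Moody presentation should follow from the compatibility of Verma modules with parabolic induction; the D-module presentation is more geometric, since Whittaker degeneration along the roots of $U$ implements the categorical induction. The regular Beilinson--Bernstein step is then handled by identifying the Verma on each side with the standard sheaf on the base orbit and matching endomorphism algebras, both of which are appropriate monodromic completions of the affine Hecke algebra for $L$.

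The main obstacle will be the Kac--Moody presentation as a parabolic induction. The difficulty is that $\gk\mod_\chi$ is defined abstractly as a $D(G(\!(z)\!))$-submodule, and one must show it coincides with the induction from the Levi block without acquiring spurious objects supported on wildly ramified opers. The Steinberg condition on the D-module side is engineered precisely for this match. A further technical input will be controlling the behavior of the functor with respect to the $t$-structure and the renormalization of \eqref{e:bigcat}, since, as emphasized in the introduction, in the most interesting cases objects of $\gk\mod_\chi$ are homological phantoms concentrated in cohomological degree $-\infty$, and the equivalence must be realized on the enlarged dg category rather than at any finite truncation.
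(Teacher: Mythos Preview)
Your parabolic-induction architecture matches the paper's, but you have the base case backwards, and this is the crux of the argument rather than a detail. If $W_L$ is the dot-stabilizer of $\lambda$, then $\lambda$ is \emph{maximally singular} for $L$, not regular; correspondingly $\psi$ is engineered to be trivial on $N_P$ but \emph{nondegenerate} on $N/N_P\simeq N_L$, so on the Levi one lands in the nondegenerate Whittaker category, not the $\psi=0$ one. The paper's proof (Theorem~\ref{t:singloc}) exploits exactly this: in the maximally singular case both $\fl\mod_\lambda^{B_L}$ and $D(B_L\backslash L/N_L^-,\psi)$ are copies of $\Vect$, so the check on Borel-equivariant objects is a one-line computation. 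Your proposed step ``invoke regular Beilinson--Bernstein for $L$'' and ``match endomorphism algebras as monodromic completions of the affine Hecke algebra for $L$'' describes the wrong endpoint; taken literally it is either circular or empty. Relatedly, when $G$ is a loop group the Levi of a parahoric is a \emph{finite-type} reductive group, so $L(\!(z)\!)$ and an ``affine Hecke algebra for $L$'' do not appear.

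The step you correctly flag as the main obstacle, namely $\gk\mod_\chi \simeq \on{pind}_L^G(\fl\mod_\lambda)$, is Theorem~\ref{t:gmodparind}. Its proof is more elementary than you anticipate: there are no $t$-structure or renormalization issues, and no need to rule out wildly ramified contributions. One works entirely on $B$-equivariant objects, shows the functor sends the compact generators $\sM_w$ to $M_{w\cdot\lambda}$ via Lemma~\ref{l:intops}, and verifies fully faithfulness by the single calculation $\Hom(M_{w\cdot\lambda},M_\lambda)\simeq\Hom(M_{w\cdot\lambda},A_\lambda)$, using that the antidominant Verma $M_\lambda$ is simple and hence equals its contragredient dual.
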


  Let us discuss the contents of Theorem \ref{t:locintro} in several cases. 
    
   \subsubsection{}  If $\chi$ is regular of negative level, then $\psi$ is trivial, and Theorem \ref{t:locintro} recovers the usual Kashiwara--Tanisaki localization theorem on the enhanced affine flag variety \cite{ktl}.

\subsubsection{} By contrast, if $\chi$ is singular of negative level, to our knowledge the theorem and its analogue for a reductive group $G$ are both new, and are one of the main results of the present work. 

Let us focus on the maximally singular case, e.g. for $G$ with $\chi = - \rho$. Here, the theorem says that localization onto the nondegenerate Whittaker flag manifold yields an equivalence 
\begin{equation}  \label{e:locmaxsing}
   \fg\mod_{-\rho} \simeq D\big(G/N, \psi, \St).
\end{equation}

To get a feel for what is going on, recall that in the regular case, i.e. Beilinson--Bernstein localization, the underlying quasicoherent sheaves of the obtained D-modules on $G/N$ descend to the flag variety $G/B$. This homogeneous space parametrizes maximal unipotent subalgebras $\fn \subset\fg$, and the fibres of the obtained sheaf store (a generalized weight space in) the corresponding $\fn$-homology of the initial $\fg$-module.

In the maximally singular case \eqref{e:locmaxsing}, the underlying quasicoherent sheaves of the obtained D-modules on $G$ descend to $G/Z_GN$, where $Z_G$ denotes the center of $G$. This homogeneous space parametrizes maximal unipotent subalgebras $\fn$ equipped with a generic additive character $\Psi$, and the fibres of the obtained sheaf store the corresponding $\Psi$-twisted $\fn$-homology of the initial $\fg$-module.

 Since the work of Beilinson--Bernstein, the problem of singular localization has been studied by various authors to various ends, with important contributions by Backelin--Kremnizer, Beilinson--Ginzburg, Bezrukavnikov--Mirkovic--Rumynin, and Kashiwara \cite{krem}, \cite{beilinsonginzburg}, \cite{bmr}, \cite{kashloc}.  

To our knowledge \eqref{e:locmaxsing} may be the first nontrivial identification of $\fg\mod_{-\rho}$ with a category of D-modules. Indeed, for any singular generalized infinitesimal character,  localization onto the enhanced flag variety $G/N$ is not fully faithful. A fully faithful localization functor for singular categories was introduced by Backelin--Kremnizer in the interesting work \cite{krem}. As they note, however, for maximally singular blocks it is a special case of the tautological equivalence between all $\fg$-modules and all weakly $G$-equivariant D-modules on $G$.

\subsubsection{}  Finally, unlike at negative level, where one can show the equivalences are $t$-exact, the equivalences for $\chi$ at positive level are of infinite cohomological amplitude, and make essential use of the renormalization of the underlying triangulated categories. They may be considered an affine analogue of usual localization composed with the long intertwining operator.

\subsection{Theorem \ref{t:locintro} via categorical parabolic induction} Finally, we discuss the proof of Steinberg--Whittaker localization.

\subsubsection{} The basic idea is as follows. Many categories in representation theory are well known to `look like' induced representations at the level of Grothendieck groups.

In the example at hand, blocks of Category $\mathscr{O}$ `look like' antispherical modules for the Hecke category, i.e. inductions of sign characters. This point of view has been emphasized by Soergel and his school, and in particular plays a key role in the recent breakthroughs of Williamson and others in the modular representation theory of reductive groups.

As we show in this article, {\em the induction picture of the Grothendieck group underlies an induction at the level of categories}. Given this, the localization theorems are easy consequences. 

\subsubsection{} Let us formulate the contents of the preceding paragraph precisely. The discussion below is for $G$, the case of the loop group being similar. 

  Fix an antidominant weight $\chi$, and write $P$ for the standard parabolic corresponding the stabilizer of $\chi$ under the dot action of the Weyl group. Write $N_P$ for the unipotent radical of $P$, and $L$ for its Levi quotient, with corresponding Lie algebra $\fl$. Note that $\chi$ is a maximally singular weight for $L$. 
  
  Consider the functor of parabolic induction of Lie algebra representations 
  $
     \on{pind}: \fl\mod_\chi \rightarrow \fg\mod_\chi^{N_P}. 
  $
  By adjunction, this induces a functor 
  \begin{equation} \label{e:pindrepz}
          D(G) \underset{D(P)} \otimes \fl\mod_\chi \rightarrow \fg\mod_\chi,
  \end{equation}
  and we show in Theorem \ref{t:gmodparind} that this is an equivalence.

  \begin{re} On $B$-equivariant objects, \eqref{e:pindrepz} yields a canonical equivalence 
  \[
       D(B \backslash G / B) \underset{D(B \backslash P / B)} \otimes \fl\mod_\chi^B \simeq \fg\mod_\chi^B.
  \]
  That is, blocks of Category $\mathscr{O}$ are inductions of sign representations at the level of categories. Similarly, in geometry we find that convolution induces an equivalence 
  \[
      D(B \backslash G / B) \underset{D(B \backslash P / B)} \otimes D(B \backslash P / N, \psi) \simeq D(B \backslash G / N, \psi),
  \]
  where $\psi$ is a generic additive character of $N/N_P$. 
  \end{re}
  
 \subsubsection{} Let us turn to localization theory. In the maximally singular case \eqref{e:locmaxsing}, after introducing a functor between the two sides, by abstract nonsense it suffices to check the obtained map on $B$-equivariant categories is an equivalence. In this case, both sides are equivalent to the category of vector spaces, so this is an easy check. The general case then follows formally by parabolically inducing  \eqref{e:locmaxsing}. A similar argument also recovers the singular localization of Backelin--Kremnizer.

  \begin{re} In the case of regular $\chi$, one can ask how this treatment of localization compares to earlier ones. The construction of the functors is somewhat different, especially so in affine type. As for the proof, the reduction to an equivalence on Borel-equivariant objects first appeared in work of E. Frenkel--Gaitsgory \cite{fgl}. From there, they rely on an explicit nontrivial calculation by Kashiwara--Tanisaki, while we note it follows directly from the compatibility with the Hecke action. 
  \end{re}

  \begin{re} One can also ask about the strict logical dependence of the arguments in the present paper on prior results.\footnote{Of course, the non-logical dependence on earlier work is immense, and difficult to overstate.} Aside from facts in categorical representation theory (of the flavor of abstract nonsense), our proofs of the main Theorems \ref{t:hch=hecke} and \ref{t:locintro} are self-contained except for some basic facts about the Schubert decomposition of flag varieties and Category $\mathscr{O}$ for $\fsl_2$.

  After this, we do use two nontrivial results from representation theory. To obtain the block decompositions for our categories of $\gk$-modules and bimodules, we use Kac--Kazhdan's calculation of the Shapovalov determinant \cite{kk}. In addition, to obtain the linkage principle for positive energy representations predicted by Yakimov, we need to identify our construction of $\fg\mod_\chi$ with the usual description via infinitesimal characters. This is deduced from Beilinson--Bernstein's calculation of global twisted differential operators on the finite flag manifold \cite{beilinson-bernstein}.

  \end{re}

 \subsubsection{} We would like to finish with a couple comments which lie outside the scope of the article.
 
 First, the presented arguments adapt, {\em mutatis mutandis}, to the case of Kac--Moody groups and blocks containing a simple Verma module with finite Weyl group stabilizer (or the dual of such a block). It would be very interesting to find a similar approach to localization in positive characteristic and for quantum groups. 
 
 Second, while we address the question raised by I. Frenkel--Malikov in \cite{frenkelmalikov1} and \cite{frenkelmalikov2} of finding the desired category of affine Harish-Chandra bimodules, we do not address some striking conjectures of {\em loc. cit.} on the relation with Kazhdan--Lusztig fusion. There indeed is a monoidal functor of the anticipated form from the Kazhdan--Lusztig category to the present category of bimodules. This, along with the closely related theory of projective functors in affine type and principal series Harish-Chandra bimodules, will be discussed in a sequel to the present work.

Third, we would like to mention that forthcoming work of Losev produces categories of Harish-Chandra bimodules for quantum groups. We expect that one has equivalences between these and the categories considered in this paper, compatible with the Kazhdan--Lusztig equivalence.



\subsection{Organization of the paper} Throughout the first half of the paper, we alternate between generalities in categorical representation theory, and their applications to the problem at hand. 

In Section 2, we gather preliminary material and set notation. 

In Section 3, we study the basic adjunctions between representations of Hecke categories and groups. This is then used in Section 4, where we introduce the categories of Lie algebra representations studied in this paper. 

In Section 5, we prove the localization theorems in the maximally singular case. We then study the functor of parabolic induction for categorical representations, and crucially its relation with induction for Hecke categories, in Section 6. This is used in Section 7 to deduce the general case of the localization theorems. 

The remaining sections then apply the previous material to various problems in representation theory, and may be read independently of one another. In Section 8, we obtain a linkage principle for positive energy representations of affine Lie algebras suggested by Yakimov. In Section 9, we introduce equivariant Hochschild cohomology and the analogues of strict infinitesimal characters. Finally, in Section 10, we obtain the main theorems on affine Harish-Chandra bimodules.

\noindent {\bf Acknowledgements.} This paper arises from pairing methods from categorical representation theory with perspectives on Hecke categories and their modules we learned from the work of Soergel and his school. 

For the former subject, we are very grateful to Dennis Gaitsgory, Sam Raskin, and David Yang for stimulating conversations. In particular, we thank Sam Raskin for sharing with us his approach to Kashiwara--Tanisaki localization via convolution with a Verma module, which greatly influenced our thinking. 

For the latter subject, we thank Roman Bezrukavnikov, Ben Elias, Ivan Losev, Shotaro Makisumi, Ben Webster,  Geordie Williamson, and Zhiwei Yun for many helpful conversations. 

We would also like to thank Levent Alpoge, Tony Feng, Sasha Goncharov, David Jordan, Apoorva Khare, Fedor Malikov, Wolfgang Soergel, and Milen Yakimov for stimulating conversations and correspondence related to the present work. 

Finally, particular thanks are due to Igor Frenkel, who shared with us some of his beautiful expectations about affine Harish-Chandra bimodules and principal series for loop groups several years ago.

\section{Preliminaries and notation}
\label{s:pn}

In this section we collect some background material and set notation. The reader may wish to, after possibly glancing at
Sections \ref{sss:grpslies} and \ref{sss:gmodintlevels} for what we denote by $G$, $B$, and $\fg\mod$, move directly to Section \ref{s:hckinv} and refer back only as needed. 

\subsection{Differential graded categories} A reference for the following material is Chapter 1 of \cite{gaitsroz}. 

\subsubsection{}

Throughout the paper, we work over a fixed algebraically closed field $k$ of characteristic zero. We denote by $\DGCat$ the $(\infty, 2)$-category of cocomplete, presentable $k$-linear dg-categories. Unless otherwise specified, by a category $\sC$ we mean an object of $\DGCat$. By a functor between categories $\sC \rightarrow \sD$ we mean an object of the corresponding category\footnote{We view it as a dg-category, using the fact that $\DGCat$ is enriched over itself. It is convenient to express this dg-category as an internal Hom using the Lurie tensor product discussed below. In doing so, one can avoid the theory of $(\infty,2)$-categories entirely if desired.} of maps
$
     \on{Hom}_{\DGCat}(\sC, \sD),
$
i.e. a continuous quasi-functor; see for example \cite{drquo} for a review of quasi-functors.

Throughout the paper, when we consider a limit or colimit of categories, we mean the (co)limit taken in $\DGCat$.

\subsubsection{} We recall that $\DGCat$ carries the structure of a unital symmetric monoidal $\infty$-category with respect to the Lurie tensor product. The unit is given by $\on{Vect}$, the category of $k$-vector spaces, and
we denote the underlying binary product of categories $\sC$ and $\sD$ by $\sC \otimes \sD$. Given objects $c$ of $\sC$ and $d$ of $\sD$, we denote the corresponding object of $\sC \otimes \sD$ by $c \boxtimes d$. 

 If a category $\sC$ is dualizable with respect to the Lurie tensor product, we denote its dual by $\sC^\vee$. Plainly, this is equivalent to, for any category $\sS$, the natural map
 \begin{align*}
      \Hom_{\DGCat}(\sC, \Vect) \otimes \sS &\simeq \Hom_{\DGCat}(\sC, \Vect) \otimes \Hom_{\DGCat}(\Vect, \sS) \\ &\rightarrow \Hom_{\DGCat}(\sC, \sS)
 \end{align*}
 being an equivalence (or in fact, just for $\sS = \sC$), in which case $\sC^\vee \simeq \Hom_{\DGCat}(\sC, \Vect)$. 
 
 Of particular importance to us is the resulting identification of the category of endo-functors of a dualizable category $\scc$
 \begin{equation} \label{e:dualend}
    \scc^\vee \otimes \scc \simeq \on{Hom}_{\DGCat}(\scc, \scc).
 \end{equation}
This is an equivalence of monoidal $\infty$-categories, where on the left the underlying binary product is induced by contraction of tensors using the evaluation pairing
\[
    \on{ev}: \scc \otimes \scc^\vee \rightarrow \on{Vect}.
\]
I.e., it is given by the composition 
\[
   \sC^\vee \otimes \sC \otimes \sC^\vee \otimes \sC \xrightarrow{\on{id}_{\sC^\vee} \otimes \on{ev} \otimes \on{id}_{\sC}} \sC^\vee \otimes \Vect \otimes \hspace{.5mm}  \sC \simeq \sC^\vee \otimes \sC. 
\]

Finally, if $\sC$ and $\sD$ are dualizable, one then has a canonical equivalence 
\[
\on{Hom}_{\DGCat}(\sC, \sD) \simeq \Hom_{\DGCat}(\sD^\vee, \sC^\vee),
\]
and in particular for a functor $F: \sC \rightarrow \sD$
one has a dual functor $F^\vee: \sD^\vee \rightarrow \sC^\vee$. 
\subsection{Groups and Lie algebras}
\label{ss:grpslies}
\subsubsection{}\label{sss:grpslies} Throughout the paper, we use the letter $G$ to denote either (i) a connected reductive group over $k$, or (ii) the loop group of an almost simple, simply connected  group $H$.\footnote{One may deduce from case (ii) similar results, {\em mutatis mutandis}, for (the neutral component of) the loop group of a connected reductive group.} We recall that the latter is an ind-scheme of infinite type over $k$. 

We use the letter $B$ to denote in case (i) a Borel subgroup of $G$, or in case (ii) an Iwahori subgroup. By the latter, we mean the preimage in the arc group of $H$ of a Borel subgroup of $H$. We denote the prounipotent radical of $B$ by $N$, and write $T$ for the abstract Cartan $B/N$. We denote the character and cocharacter lattices of $T$ by $\Lambda$ and $\check{\Lambda}$, respectively, in case (i),  and $\Lambda_f$ and $\check{\Lambda}_f$, respectively, in case (ii). The subscript `$f$' is for `finite'; we will explain what we denote by $\Lambda$ in case (ii) below in Section \ref{ss:liealgreps}. 

\subsubsection{} We denote by $\fg$ the Lie algebra of $G$ in case (i), and the topological Lie algebra in case (ii). Explicitly, if in the latter case we write $\fh$ for the Lie algebra of $H$, we have 
\[
   \fg \simeq \fh \underset k \otimes k(\!(z)\!). 
\]
\subsubsection{} 
Given a level $\kappa$ for $\fh$, i.e. an $\on{Ad}$-invariant symmetric bilinear form on $\fh$, we denote the corresponding Kac--Moody extension by 
\begin{equation} \label{e:centext}
    0 \rightarrow k \cdot \mathbf{1} \rightarrow \widehat{\fh}_\kappa \rightarrow \fh \underset k \otimes k(\!(z)\!) \rightarrow 0. 
\end{equation}
 Recall there exists a {\em basic form} $\kappa_b$, which gives the short coroots squared length two. We call a form $\kappa$ {\em integral} if it is an integral multiple of $\kappa_b$.

Recall that the critical level $\kappa_c$ is by definition minus one half times the Killing form. We say a noncritical level $\kappa$ is {\em positive} if 
\[
        \kappa - \kappa_c \hspace{.5mm} \in \hspace{.5mm} \mathbb{Q}^{> 0} \cdot \kappa_b, 
\]
and otherwise call it {\em negative}.

\subsection{D-modules} Some references for the following material are \cite{rdm} and the very readable \cite{beraldo}. 

\subsubsection{} \label{ss:biggroups} For a quasi-compact, quasi-separated scheme $X$ of finite type over $k$, we denote by $D(X)$ the canonical dg-enhancement of its unbounded derived category of D-modules. 

We will also need categories of D-modules on certain infinite dimensional varieties, which we again denote by $D(X)$; this is denoted by $D^*(X)$ in the above references. 

 For an ind-scheme of ind-finite type, e.g. the affine flag variety, given a presentation as a filtered colimit of varieties of finite type along closed embeddings
$
     X = \varinjlim_\alpha X_\alpha,
$
its category of D-modules is the colimit (in $\DGCat$)
\[
   D(X) \simeq \varinjlim_\alpha D(X_\alpha).
\]
Plainly, $D(X)$ is compactly generated by $*$-pushforwards of coherent complexes of D-modules from all $X_\alpha$, with the natural chain complexes of homomorphisms between such pushforwards. 

We note that $D(X)$, like many categories met in this paper, carries a natural $t$-structure but is not in general (1) the derived category of its heart, nor (2) left complete, i.e. it may contain objects in degrees $\leqslant n$ for all integers $n$ with respect to the $t$-structure.

%
%
%
%
Recall that a scheme $Y$ is called \emph{placid} if it admits a presentation 
$
     Y = \varprojlim_\beta Y_\beta
$
as a cofiltered limit of varieties under affine smooth maps (e.g. the Iwahori subgroup). Its category of D-modules is the colimit 
\[
   D(Y) \simeq \varinjlim_\beta D(Y_\beta).
\]
Plainly, $D(Y)$ is compactly generated by $!$-pullbacks of coherent complexes from all $Y_\beta$, with homomorphisms the colimit of the complexes of homomorphisms of the pullbacks to $Y_{\beta'}$, for all sufficiently large $\beta'$.

Finally, a placid ind-scheme, e.g. the loop group, is an ind-scheme which admits a presentation as a filtered colimit of placid schemes along closed embeddings of finite presentation $Z = \varinjlim_\gamma Z_\gamma$. Its category of D-modules is again the colimit of $*$-pushforwards
\[
     D(Z) \simeq \varinjlim_\gamma D(Z_\gamma).
\]
Given a map $f: X \rightarrow Y$ of placid ind-schemes, one has an associated $*$-pushforward
\[
 f_*: D(X) \rightarrow D(Y). 
\]

\subsubsection{}\label{ss:dualsdmods} Finally, we should note such categories of D-modules are typically self-dual as dg-categories. For a variety $X$, the duality pairing $D(X) \otimes D(X) \rightarrow \on{Vect}$ is given by
\[
  M \boxtimes N \mapsto \Gamma_{\on{dR}}(X, M \overset ! \otimes N). 
\]
For a map of varieties $X \rightarrow Y$, the functors of $*$-pushforward and $!$-pullback are then dual. 

Similarly, for a placid scheme, or ind-scheme of ind-finite type, its category of D-modules is canonically self-dual. For a placid ind-scheme, a choice of {\em dimension theory}, if it exists, yields a self-duality, cf. \cite{rdm}.

\subsection{Categorical representations} Some references for this material are \cite{beraldo}, \cite{mys}, and \cite{whitlocglob}.

\subsubsection{}\label{ss:catreps} If $K$ is a group placid ind-scheme, its category of D-modules $D(K)$ is naturally an algebra object in $\DGCat$, whose underlying binary product is given by convolution
\[
     D(K) \otimes D(K) \simeq D(K \times K) \xrightarrow{ \on{mult}_*} D(K). 
\]
One then has its associated $(\infty, 2)$-category of left modules $D(K)\mod$. In particular, an object of $D(K)\mod$ is a category $\sC$ equipped with an action 
$
     D(K) \otimes \sC \rightarrow \sC
$
which is associative and unital up to coherent homotopy. 

Inversion on $K$ yields a canonical equivalence between $D(K)$ and its opposite algebra. This induces an identification of its categories of left and right modules. In particular, for any $K$-modules $\sC$ and $\sD$ the category of maps $\Hom_{\DGCat}(\sC, \sD)$ is naturally a $K\times K$-module.

As with modules for any algebra object $\sM$ in $\DGCat$, one has a conservative `forgetful' functor
\[
\on{Oblv}: \Hom_{D(K)\mod}(\sC, \sD) \rightarrow \Hom_{\DGCat}(\sC, \sD). 
\]
So, a $D(K)$-equivariant functor $F: \sC \rightarrow \sD$ is a functor between the underlying categories equipped with a datum of $D(K)$-equivariance. 

  We would like to record one useful property of $D(K)$ which is not enjoyed by all algebras. Namely, any datum of lax or oplax equivariance on a functor $F: \sC \rightarrow \sD$ of $D(K)$-modules is automatically strict; this is Lemma D.4.4 of \cite{whitlocglob}. In particular, any adjoint of a $D(K$)-equivariant functor acquires a canonical datum of $D(K)$-equivariance.

\subsubsection{}\label{s:inv1} An important class of categorical representations arises as follows. If $K$ acts on a placid ind-scheme $X$, this yields an action of $D(K)$ on $D(X)$. Moreover, the duality pairing of Section \ref{ss:dualsdmods} carries a canonical datum of $D(K)$-equivariance. 

In particular, taking $X$ to be a point, we obtain the {\em trivial} representation 
\[
  \on{Vect} \in D(K \times K)\mod.\footnote{Just as in the usual representation theory on vector spaces, it is convenient to view the trivial representation as a bimodule, so that the (co)invariants of a representation are again a representation, cf. Equation \eqref{e:defcoinv} below.} 
\]
For any $D(K)$-module $\sC$, we then have its associated invariants and coinvariants, respectively given by 
\begin{equation} \label{e:defcoinv}
      \sC^K := \Hom_{D(K)\mod}(\Vect, \sC) \quad \text{and} \quad \sC_K :=  \Vect \underset {D(K)} \otimes \sC.
\end{equation}
Explicitly, via the bar resolution, one has the semisimplicial presentation  
\begin{equation} \label{e:diag}
\xymatrix{ \sC^{K} \simeq \varprojlim \big( \hspace{.5mm}  \hspace{.5mm} \sC
  \ar@<-.4ex>[r] \ar@<.4ex>[r] &  \hspace{.5mm} \Hom_{\DGCat}(D(K), \sC) \ar@<-.8ex>[r] \ar@<.8ex>[r] \ar[r] & \hspace{.5mm} \Hom_{\DGCat}(D(K) \otimes D(K), \sC)  \hspace{.5mm} \cdots \hspace{.5mm} \big) \hspace{.5mm},   
}
\end{equation}
where the arrows are induced as usual by action and projection; one also has a similar semicosimplicial presentation of the coinvariants. There are tautological `forgetting' and `inserting' $D(K)$-equivariant functors 
\[
   \on{Oblv}: \sC^K \rightarrow \sC \quad \text{and} \quad \on{ins}: \sC \rightarrow \sC_K. 
\]
For $D(K)$-modules $\sC$ and $\sD$, passing from an equivariant functor to the underlying functor between categories yields an equivalence 
\[
     \on{Hom}_{D(K)\mod}(\sC, \sD) \simeq \Hom_{\DGCat}(\sC, \sD)^K,
\]
where on the right-hand side $K$ acts diagonally.

\subsubsection{} \label{s:inv2}If $K$ is an affine group scheme whose prounipotent radical is of finite codimension, then the tautological forgetful functor $\on{Oblv}$ from invariants admits a right `averaging' adjoint
\[
  \on{Oblv}: \sC^K \rightleftarrows \sC: \on{Av}_{*}^K.
\]
Moreover, by adjunction $\on{Av}_{*}^K$ factors as a map 
\[
  \sC \xrightarrow{\on{ins}} \sC_K \rightarrow \sC^K,
\]
and the latter is an equivalence. That is, for `compact open' groups $K$, their invariants and coinvariants are canonically isomorphic, and in particular commute with (co)limits and duality. 

\begin{ex} Suppose for simplicity that $K$ is of finite type, and acts on a variety $X$. Then, by smooth descent for D-modules, one has a canonical equivalence 
\[
   D(X)^K \simeq D(X/K),
\]
where $D(X/K)$ denotes the category of D-modules on the stack quotient $X/K$, cf. \cite{bernlunts}, \cite{drga} for descriptions of the latter. Writing $\pi: X \rightarrow X/K$ for the projection, this exchanges $\on{Oblv}$ with $\pi^*$, and $\on{Av}_*^K$ with $\pi_*$. 
\end{ex}

\subsubsection{} \label{ss:twists}We will also need to consider invariants twisted by a character. So, suppose $\chi$ is a character sheaf on $K$, i.e. a D-module invertible with respect to the $!$-tensor product equipped with an isomorphism
\[
   \on{mult}^! \chi \simeq \chi \boxtimes \chi
\]
which is associative. One has a corresponding $D(K \times K)$-bimodule structure on $\Vect$, which we denote by $\on{Vect}_\chi$, whose associated binary product is given by
\[
    D(K \times K) \otimes \on{Vect}_\chi \simeq D(K \times K) \xrightarrow{\on{mult}_*} D(K) \xrightarrow{ - \overset ! \otimes \chi} D(K) \xrightarrow{\Gamma_{\on{dR}}} \on{Vect}_\chi.
\]
For a $D(K)$-module $\sC$, one then has the twisted invariants  $\sC^{K, \chi}$ and coinvariants $\sC_{K, \chi}$. The results of Sections \ref{s:inv1} and \ref{s:inv2} carry over {\em mutatis mutandis}. 

The only character sheaves we need in this paper arise as follows. On $\mathbb{G}_a$, the exponential D-module exp($z$), concentrated in cohomological degree $-1$, is a character sheaf with respect to a unique structural isomorphism. Given a homomorphism $$\psi: K \rightarrow \mathbb{G}_a, $$we denote by the same letter $\psi$ the character sheaf $\psi^! \on{exp}(z).$ 

Finally, we need to recall the notion of a {generic} additive character of a maximal unipotent group $N$ in a connected reductive group $G$. Namely, the adjoint action of the associated Borel $B$  on $N$ induces an action of the abstract Cartan $T$ on the group scheme of homomorphisms
\begin{equation} \label{e:grphoms}
   \on{Hom}(N, \mathbb{G}_a).
\end{equation}
One says that an additive character $\psi: N \rightarrow \mathbb{G}_a$, viewed as a $k$-point of \eqref{e:grphoms}, is {\em generic} if it lies in the open $T$-orbit. Plainly, if one fixes a pinning of $G$, this identifies \eqref{e:grphoms} with a product of $\mathbb{G}_a$'s indexed by the Dynkin diagram of $G$; a character is generic if and only if its coordinates are all nonzero.

\subsection{Lie algebra representations} Some references for this material are \cite{fg}, \cite{ag}, and  \cite{mys}. 
\label{ss:liealgreps}
\subsubsection{} Fix an affine algebraic group $K$ of finite type. We write $\fk$ for the Lie algebra of $K$, and denote the category of $\fk$-modules by $\fk\mod$. Explicitly, this is the canonical dg-enhancement of the unbounded derived category of its abelian category of representations. 

This carries a canonical structure of $D(K)$-module, which may be seen as follows. Consider $D(K)$ as a $D(K)$-module by right 
convolution, and 
consider the category of weakly equivariant objects
$
D(K)^{K, w}.
$
Taking invariant 
sections of 
the underlying quasicoherent sheaf
yields an equivalence
\[
\Gamma^K: D(K)^{K, w} \simeq \fk\mod.
\]
Since this is important for us, we sketch the argument for the reader's convenience. Note that the left hand side is compactly generated by the algebra of differential operators, with its standard weakly equivariant structure. This is sent via $\Gamma^K$ to the enveloping algebra, and identifies their endomorphisms, which yields the assertion.

In particular, $\fk\mod$ acquires a $D(K)$-action via left convolution. Concretely, the associated action map is of the form 
\begin{equation} \label{e:convgmod}
      D(K) \otimes \fk\mod \rightarrow \fk\mod, \quad M \boxtimes N \mapsto \Gamma(K, M) \underset{\fk} \otimes N,
\end{equation}
where the appearing copy of $\fk$ under the tensor product is the $K \times K$ invariant vertical vector fields with respect to the multiplication map $K \times K \rightarrow K$.

Moreover, by the canonical identification of weak invariants and coinvariants, one has for any $D(K)$-module $\sC$ the identity
\begin{equation}
    \label{e:gmodweakinvs} \Hom_{D(K)\mod}(\fk\mod, \sC) \simeq \sC^{K, w}.
\end{equation}

By restriction, for any subgroup $P \subset K$ the category $\fk\mod$ is a $D(P)$-module. The category of equivariant objects $\fk\mod^{P}$ is canonically equivalent to the derived category of modules for the Harish-Chandra pair $(\fk, P)$, cf. Appendix A of \cite{whit}, Section 5 of \cite{tfle}, or Section 9 of \cite{mys}. In particular, if $K$ is a connected reductive group and $P$ is a Borel subgroup, we obtain a version of Category $\mathscr{O}$, or more precisely the part supported on the weight lattice.

\subsubsection{} \label{sss:infdimliealgreps}Let us turn to the infinite-dimensional case. Suppose $K$ is a group placid ind-scheme which is formally smooth and  {Tate}. Recall that the latter means that $K$ admits a closed embedding $K_\circ \hookrightarrow K$ where $K_\circ$ is an affine group subscheme of $K$ and $K/K_\circ$ is an ind-scheme of ind-finite type. We refer to such a subgroup $K_\circ$ as a compact open subgroup in what follows. 

Under this assumption, the topological Lie algebra $\fk$ of $K$ is a Tate Lie algebra. For a fixed continuous central extension $$0 \rightarrow k \cdot \mathbf{1} \rightarrow \fkk \rightarrow \fk \rightarrow 0,$$its renormalized category of representations is defined as follows. Consider the abelian category 
$
    \fkk\mod^\heartsuit
$
of smooth representations of $\fkk$ on which $\mathbf{1}$ acts by the identity. Within its bounded derived category $\fkk\mod^b$, consider the pretriangulated envelope $\sS$ of all representations induced from finite dimensional smooth representations of compact open subalgebras. By definition, $\fkk\mod$ is the ind-completion of $\sS$. 

Recall that $K$ admits a canonical extension by the multiplicative group, namely its Tate extension$$1 \rightarrow \mathbb{G}_m \rightarrow \widehat{K} \rightarrow K \rightarrow 1.$$Suppose that $\kappa$ is a $k$-multiple of the associated Tate extension of $\fk$. In this case, one has a monoidal category $D_\kappa(K)$ of twisted D-modules, and 
$\fkk\mod$ is naturally a $D_\kappa(K)$-module. Concretely, the action is similar to \eqref{e:convgmod}, with Lie algebra homology replaced by semi-infinite homology, cf. Section 22 of \cite{fg2}.

 For any compact open subgroup $K_\circ \subset K$, the twisting $\kappa$ is canonically trivial upon restriction to $K_\circ$, and the category of equivariant objects 
 \begin{equation} \label{e:hchmod}
    \fkk\mod^{K_\circ}
 \end{equation}
may be described as follows. Within the bounded derived category of modules for the Harish-Chandra pair $(\fkk, K_\circ)$, consider the pretriangulated envelope $\sS$ of modules induced from finite dimensional $K_\circ$-modules. Then \eqref{e:hchmod} is canonically equivalent to the ind-completion of $\sS$. In particular, for $K$ the loop group of an almost simple, simply connected group, and $K_\circ$ the Iwahori subgroup, we obtain a version of affine Category $\mathscr{O}$ at level $\kappa$, or more precisely the part supported on the weight lattice $\Lambda_f$. 

\subsubsection{} \label{sss:gmodintlevels}
We now discuss two pieces of nonstandard notation which will be convenient in what follows. Let $G$ and $\hk$ be as in case (ii) of Section \ref{ss:grpslies}. Note that if $\kappa$ is integral, then $\hk\mod$ carries an action of $D(G)$, i.e. one may trivialize the twisting. We may then define the $D(G)$-module 
\begin{equation} \label{e:defgmod}
  \fg\mod := \underset \kappa \oplus \hspace{1mm} \hk\mod,
\end{equation}
where $\kappa$ runs over the set $\mathscr{K}$ of noncritical integral levels, i.e. 
$
      \mathscr{K} = (\mathbb{Z} \cdot \kappa_b) \setminus \kappa_c.
$
Similarly, let us write $\Lambda$ for the set of highest weights for modules in $\fg\mod^B$. Plainly, we have 
\[
     \Lambda \simeq \Lambda_f \times \mathscr{K},
\]
where the pair $(\lambda_f, \kappa)$ corresponds to the Verma module for $\hk$ of highest weight $\lambda_f$. Recall that at an integral level $\kappa$, the dot action of the affine Weyl group $W$ preserves $\Lambda_f$. In particular, it makes sense to speak of the dot action of $W$ on $\Lambda$. 

As a final piece of notation related to our definition of $\Lambda$, let us write $\rho_f \in \Lambda_f$ for the half sum of the positive roots of $\fh$, and $\rho$ for the element $(\rho_f, \kappa_c)$ of $\Lambda_f \times \mathbb{Z} \cdot \kappa_b$. With this, we have the involution 
\[
     \lambda \mapsto -\lambda - 2 \rho: \Lambda \rightarrow \Lambda,
\]
which plays a basic role in cohomological duality. Explicitly, this sends a pair $(\lambda_f, \kappa)$ to $(-\lambda_f - 2 \rho_f, -\kappa + 2 \kappa_c)$.

\subsubsection{} \label{sss:eqindres}It will be important for us that the usual functors of induction and restriction of Lie algebra representations are maps of categorical group representations. Since we are unsure if this already appears in the literature, we include proofs. 

Suppose $K_1, K_2$ are a pair of groups as in Section \ref{sss:infdimliealgreps}, with corresponding Tate Lie algebras $\fk_1, \fk_2$. Suppose $h: K_1 \rightarrow K_2$ is a homomorphism of group ind-schemes and $\fk_{2, \kappa}$ is a central extension  of $\fk_2$ such that it and its pullback $\fk_{1, \kappa}$ to $\fk_1$ are $k$-multiples of the Tate extension. 

\begin{lem} The associated restriction functor 
\[
  \on{Res}:  \fk_{2, \kappa}\mod \rightarrow \fk_{1, \kappa}\mod
\]
carries a canonical datum of $D_\kappa(K_1)$-equivariance.
\end{lem}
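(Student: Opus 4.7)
The plan is to realize restriction geometrically as the $!$-pullback $h^!$ of twisted D-modules along $h$, for which $D_\kappa(K_1)$-equivariance becomes essentially tautological. Via the Tate and twisted version of the equivalence $\Gamma^K$ sketched in Section \ref{ss:liealgreps}, I would first identify $\fk_{i,\kappa}\mod \simeq D_\kappa(K_i)^{K_i, w}$, where `$K_i$-weakly equivariant' refers to the right-translation action and the ambient $D_\kappa(K_i)$-module structure is by left convolution. In particular, $\fk_{2,\kappa}\mod$ acquires a canonical $D_\kappa(K_1)$-module structure through the monoidal pushforward $h_*: D_\kappa(K_1) \rightarrow D_\kappa(K_2)$. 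Since $\fk_{1,\kappa}$ is by hypothesis the pullback of $\fk_{2,\kappa}$, the twisting on $K_2$ pulls back to that on $K_1$, so $h_*$ and $h^!$ are both well-defined for the appropriate twisted D-modules.

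Because $h$ is a group homomorphism, it intertwines the right-translation action of $K_1$ on itself with that of $K_2$ on itself restricted through $h$; consequently $h^!$ preserves right-weak-equivariance and descends to
\[
    h^!: D_\kappa(K_2)^{K_2, w} \longrightarrow D_\kappa(K_1)^{K_1, w}.
\]
For the equivariance, the same homomorphism property yields a Cartesian square relating the left-multiplication map $m_L: K_1 \times K_1 \to K_1$ with the left-action map $K_1 \times K_2 \to K_2$, $(k_1, k_2) \mapsto h(k_1) k_2$. Base change then produces a natural isomorphism $h^!(h_*(A) \star M) \simeq A \star h^!(M)$ for $A \in D_\kappa(K_1)$ and $M \in D_\kappa(K_2)$. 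Since any lax $D(K)$-equivariance structure on a functor between $D(K)$-modules is automatically strict by Lemma D.4.4 of \cite{whitlocglob} (cf.\ Section \ref{ss:catreps}), this upgrades $h^!$ canonically to a $D_\kappa(K_1)$-equivariant functor.

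It remains to identify this geometric pullback with the classical restriction. Under $\Gamma^{K_i}$, the $\fk_{i, \kappa}$-action on invariant sections of a weakly equivariant twisted sheaf is given by the action of left-invariant vector fields together with the central trivialization determining the twisting. Because $dh$ carries left-invariant vector fields on $K_1$ to left-invariant vector fields on $K_2$ pulled back along $h$, and by hypothesis respects the central extensions, the effect of $h^!$ on invariant sections reproduces restriction along $dh: \fk_{1, \kappa} \rightarrow \fk_{2, \kappa}$, as desired.

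The main obstacle I anticipate lies in the Tate-theoretic underpinnings: namely, establishing the equivalence $\fk_{i, \kappa}\mod \simeq D_\kappa(K_i)^{K_i, w}$ in the twisted placid ind-scheme setting compatibly with the renormalization of Section \ref{sss:infdimliealgreps} (i.e.\ with compact generation by inductions from finite-dimensional representations of compact open subalgebras), and verifying that $h^!$ preserves this compact generation. For the latter one reduces, given a pair of compact open subgroups $K_{1, \circ} \subset K_1$ and $K_{2, \circ} \subset K_2$ with $h(K_{1, \circ}) \subset K_{2, \circ}$, to a routine bar-resolution comparison of the associated induction functors.
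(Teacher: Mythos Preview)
Your approach is essentially the same as the paper's: both identify $\fk_{i,\kappa}\mod \simeq D^!_\kappa(K_i)^{K_i, w}$ and realize restriction geometrically via $h^!$. The paper's version is slightly cleaner, factoring the map as $\on{Oblv}: D^!_\kappa(K_2)^{K_2, w} \to D^!_\kappa(K_2)^{K_1, w}$ followed by $h^!: D^!_\kappa(K_2)^{K_1, w} \to D^!_\kappa(K_1)^{K_1, w}$ and simply observing that each intermediate functor carries a canonical datum of $D_\kappa(K_1)$-equivariance---this obviates your base-change diagram and the lax-to-strict invocation, and the Tate-theoretic foundations you flag are dispatched by citing Section~9 of \cite{mys}.
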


\begin{proof} Following the notation of Section 9 of \cite{mys}, restriction identifies with the composition 
\[
   \fk_{2, \kappa}\mod \simeq D^!_\kappa(K_2)^{K_2, w} \xrightarrow{\on{Oblv}} D^!_\kappa(K_2)^{K_1, w} \xrightarrow{h^!} D^!_\kappa(K_1)^{K_1, w} \simeq \fk_{1, \kappa}\mod,
\]
and the two intermediate functors carry canonical data of $D_\kappa(K_1)$-equivariance.    \end{proof}

By passing to left adjoints, we obtain the following. 

\begin{cor} If $K_1$ is a compact open subgroup of $K_2$, the functor of induction 
\[
\on{ind}: \fk\mod \rightarrow \fk_{2, \kappa}\mod
\]
carries a canonical datum of $D(K_1)$-equivariance. 
\end{cor}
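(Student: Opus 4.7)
The plan is to deduce this from the preceding lemma by invoking the general principle, recalled in Section \ref{ss:catreps} and established as Lemma D.4.4 of \cite{whitlocglob}, that any adjoint of a $D(K)$-equivariant functor carries a canonical datum of $D(K)$-equivariance (since any lax or oplax datum of equivariance for $D(K)$ is automatically strict). The preceding lemma provides $D_\kappa(K_1)$-equivariance on $\on{Res}: \fk_{2,\kappa}\mod \to \fk_{1,\kappa}\mod$, and since $K_1$ is compact open in $K_2$ the twisting $\kappa$ restricts trivially to $K_1$, so $D_\kappa(K_1) \simeq D(K_1)$ and $\fk_{1,\kappa}\mod \simeq \fk\mod$ in the notation of the statement.

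The first step will be to verify that $\on{ind}$ is in fact the left adjoint of $\on{Res}$ at the level of the renormalized dg-categories. At the abelian level this is the classical enveloping-algebra tensor product adjunction. To upgrade it, I would check that $\on{ind}$ preserves the generating subcategories used in the definition of Section \ref{sss:infdimliealgreps}: a module for $\fk_{1,\kappa}$ induced from a finite-dimensional representation of a compact open $K_{1,\circ} \subset K_1$ can be viewed as induced from $K_{1,\circ}$ as a compact open subgroup of $K_2$, hence lies in the pretriangulated envelope $\sS$ on the $K_2$-side. This makes $\on{ind}$ a well-defined continuous functor between ind-completions, and one checks on these compact generators that the $(\on{ind}, \on{Res})$ adjunction persists.

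With the adjunction in place, the second step is immediate: apply the cited principle to the preceding lemma to equip $\on{ind}$ with a canonical $D(K_1)$-equivariant structure, arising from mate/passage-to-adjoint of the $D_\kappa(K_1) = D(K_1)$-equivariance of $\on{Res}$.

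The main subtlety I anticipate is purely the renormalization bookkeeping in the first step, i.e.\ verifying that the adjunction survives the ind-completion procedure of Section \ref{sss:infdimliealgreps} (in particular that $\on{ind}$ of a compact generator is again compact, so that continuity is unproblematic). The equivariance itself requires no additional work beyond quoting the passage-to-adjoint principle, so no further obstacle is expected.
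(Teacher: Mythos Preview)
Your proposal is correct and takes essentially the same approach as the paper, which simply records the corollary as following ``by passing to left adjoints'' from the preceding lemma. Your additional discussion of why the $(\on{ind},\on{Res})$ adjunction survives renormalization is a reasonable elaboration on a point the paper leaves implicit.
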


\subsubsection{} \label{ss:liealgrepsd}As for D-modules, the appearing categories of Lie algebra representations are canonically self-dual. Namely, for any finite dimensional Lie algebra $\mathfrak{k}$, the functor of Lie algebra homology
\[
   \mathfrak{k}\mod \otimes \hspace{.5mm} \mathfrak{k}\mod \rightarrow \Vect, \quad \quad M \boxtimes N \mapsto M \underset{\mathfrak{k}}\otimes N
\]
is a perfect pairing. In particular, one has a canonical equivalence of monoidal $\infty$-categories
\[
   \mathfrak{k} \oplus \mathfrak{k}\mod \simeq \mathfrak{k}\mod \otimes \hspace{.5mm} \mathfrak{k}\mod \simeq \on{Hom}_{\DGCat}( \mathfrak{k}\mod, \mathfrak{k}\mod),
\]
cf. \eqref{e:dualend}. Plainly, this exchanges the tensor product of bimodules with the composition of endofunctors.

Let us turn to the affine Lie algebra $\hk$, cf. Section \ref{ss:grpslies}. Consider the functor of semi-infinite homology, which we normalize with respect to the pro-Lie algebra of its Iwahori subgroup,
\[
    \hk\mod \otimes \hspace{.5mm} \widehat{\fh}_{-\kappa  + 2\kappa_c}\mod \rightarrow \Vect, \quad M \boxtimes N \mapsto M \underset{\widehat{\fh}}{\overset{\frac{\infty}{2}}\otimes} \hspace{.5mm} N.
\]
It was shown by Arkhipov--Gaitsgory that this is a perfect pairing \cite{ag}; for an alternative proof cf. Section 5 of \cite{lpw}. By Section 9 of \cite{mys} this functor carries a canonical datum of $D_\kappa(G)$-equivariance, i.e. the duality is one of categorical representations.  

\begin{re}We remind that the Lie algebra of the Iwahori only appears to normalize the functor; any other compact open subalgebra would do, and the resulting functor would differ by tensoring with a cohomologically graded line. \end{re}

One again has a canonical equivalence of monoidal $\infty$-categories
\[
    \hk \oplus \widehat{\fh}_{-\kappa + 2 \kappa_c}\mod \simeq \hk\mod \otimes \hspace{.5mm} \widehat{\fh}_{-\kappa + 2 \kappa_c}\mod \simeq \on{Hom}_{\DGCat}(\hk\mod, \hk\mod);
\]
this exchanges the semi-infinite tensoring of bimodules with the composition of endo-functors. 

By summing over the noncritical integral levels $\mathscr{K}$, and recalling that $\kappa_c$ is integral, it follows that $\fg\mod$ is self-dual, and that one has a canonical equivalence $$\fg\mod \otimes \hspace{.5mm}\fg\mod \simeq \on{Hom}_{\DGCat}(\fg\mod, \fg\mod).$$
Concretely, to compose bimodules 
\[
   M \in \widehat{\fh}_{\kappa_1} \oplus \widehat{\fh}_{\kappa_2}\mod, \quad \quad N \in \widehat{\fh}_{\kappa_3} \oplus \widehat{\fh}_{\kappa_4}\mod,
\]
one tensors them if the anomaly vanishes, and otherwise sets their composition to zero, i.e. 
\[
    M \star N := \begin{cases}M \underset{\widehat{\fh}}{\overset{\frac{\infty}{2}}\otimes} \hspace{.5mm} N \in \widehat{\fh}_{\kappa_1} \oplus \widehat{\fh}_{\kappa_4}\mod & \on{if } \kappa_2 + \kappa_3 = 2\kappa_c, \\ 0 & \on{otherwise.} \end{cases}
\]

\section{Hecke algebras and invariant vectors}\label{s:hckinv}

In this section we record, in the present categorical context, the fundamental adjunctions between representations of a group and its Hecke 
algebras, as well as their basic properties.

The statements and proofs in this section have the flavor of abstract nonsense. The reader mostly interested in representation-theoretic applications may wish to glance at the statements of Theorem \ref{t:cuff}, Definition \ref{d:genkinvs}, and Corollary \ref{c:checkonhwvs} before proceeding directly to Section \ref{s:catliereps}.

 As with later sections containing abstract nonsense, many of the basic results are known to experts.

\subsubsection{}   We first
recall the analogous theory 
for actions of groups on vector spaces; the reader may wish to skip this and proceed directly to Section \ref{ss:adjs}. 

  Suppose that $H$ is a finite group, and write $H\on{-mod}$ for its category of representations 
over a 
field of characteristic zero. Given a subgroup $K$ of $H$, one can consider the associated functor 
of $K$-invariants. This is co-represented by the space of 
functions $\on{Fun}(H/K)$, whose endomorphisms are the convolution algebra $\on{Fun}(K \backslash H / K).$
In particular, we obtain an adjunction 
\begin{equation} \label{e:fgadj}
    \on{Fun}(H/K) \underset{\on{Fun}(K \backslash H / K)}{\otimes} -  
    :\on{Fun}(K \backslash H/ K)\on{-mod} \rightleftarrows 
        H\on{-mod}: \on{inv}^K. 
\end{equation}
The left adjoint in \eqref{e:fgadj} is fully faithful, with essential 
image generated by the simple representations which 
have nontrivial $K$ invariants. In particular, for a representation $\pi$ of 
$H$, the co-unit 
\begin{equation} \label{e:fgcou}
   \on{Fun}(H/K) \hspace{.5cm} \underset{\hspace{-.5cm}\on{Fun}(K \backslash H 
   / 
   K)}{\otimes} \pi^K 
   \rightarrow \pi
\end{equation}
is an embedding, with essential image the corresponding isotypic components of 
$\pi$, i.e. the submodule generated by $K$ invariants.

We now turn to the analogous results for categorical representations. 

\begin{re} For a $p$-adic group $H$ with a compact open subgroup $K$, an adjunction similar to \eqref{e:fgadj} plays a fundamental role in its representation theory. While this is a more direct analogue of our study of affine Hecke categories and loop groups, the categorical theory behaves more similarly to the case of a finite group, including crucially the fully faithfulness of \eqref{e:fgcou}. 
\end{re}

\subsection{The basic adjunctions} \label{ss:adjs}

\subsubsection{} Suppose $K \rightarrow H$ is a homomorphism of group placid ind-schemes, cf. Section \ref{ss:biggroups}. For 
$D(H)$-modules, the functor of $K$ invariants is given by
\[ \sHom_{D(K)\mod}(\on{Vect}, -) \simeq 
\sHom_{D(H)\mod}(D(H)_K, 
-).
\]
In particular, such invariants carry a right action by 
\[
    \sH := \Hom_{D(H)\mod}(D(H)_K, D(H)_K),
\]
and we obtain a tensor-hom adjunction
\begin{equation} \label{e:invs}
   - \underset {\HK} \otimes \hspace{.5mm} D(H)_K: \on{mod-}\hspace{-.5mm}\HK \rightleftarrows D(H)\mod: -^K.
\end{equation}
In this generality, the right adjoint in \eqref{e:invs} need not be $\DGCat$-linear, nor commute with colimits.

%
%
%
%
	%
	%

\subsubsection{} \label{sss:compactopen}

Let us further suppose that $K$ is 
a affine group subscheme of $H$  whose 
prounipotent radical is of finite codimension. In this 
case, one has a canonical identification of invariants and coinvariants with respect to $K$, cf. Section \ref{s:inv2}. In particular, we may canonically identify  $\HK$ with 
the convolution algebra of bi-$K$-equivariant D-modules
\[ \HK \simeq D(K \backslash H / K).
\]
Inversion on $H$ induces a canonical isomorphism between $\sH$ and 
its opposite algebra, and in particular between its categories of 
left and right 
modules. We use both of these identifications implicitly going forwards.

\begin{pro} \label{p:fftensoringup}Under our assumption on $K$, the left 
adjoint in 
\eqref{e:invs} is fully faithful. 
\end{pro}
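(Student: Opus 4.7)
The plan is to verify directly that the unit of the adjunction is an equivalence on every right $\sH$-module $M$. By definition, the unit at $M$ is the canonical map
\[
\eta_M: M \longrightarrow \left(M \underset{\sH}{\otimes} D(H)_K\right)^K.
\]

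The key input is the compact open hypothesis on $K$: by Section \ref{s:inv2}, we have a canonical identification $(-)^K \simeq (-)_K$ of invariants and coinvariants on $D(K)$-modules. Since $(-)_K \simeq \Vect \underset{D(K)}{\otimes} -$ is manifestly a colimit functor, the same holds for $(-)^K$. With this in hand, the main step is to commute $K$-invariants past the relative tensor product. The relevant point is that $D(H)_K$ is a $(D(H), \sH)$-bimodule on which the $\sH$-action commutes with the $D(K)$-action (as the latter is the restriction of the $D(H)$-action). Presenting $M \underset{\sH}{\otimes} D(H)_K$ as the geometric realization of the two-sided bar complex with terms $M \otimes \sH^{\otimes n} \otimes D(H)_K$, applying $(-)^K$, and pulling the factor $M \otimes \sH^{\otimes n}$ outside, yields
\[
\left( M \underset{\sH}{\otimes} D(H)_K \right)^K \simeq M \underset{\sH}{\otimes} (D(H)_K)^K.
\]

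Finally, I would identify $(D(H)_K)^K$ directly: by the corepresentability of $K$-invariants by $D(H)_K$ and the definition of $\sH$,
\[
(D(H)_K)^K \simeq \Hom_{D(H)\mod}(D(H)_K, D(H)_K) = \sH,
\]
with right $\sH$-action by post-composition, i.e., $\sH$ as the right regular module over itself. Substituting, $\eta_M$ becomes the tautological equivalence $M \simeq M \underset{\sH}{\otimes} \sH$, completing the proof. The one genuinely substantive point is the commutation of invariants with the relative tensor product, which fails for generic subgroups and holds here precisely because compact open-ness forces invariants to coincide with coinvariants.
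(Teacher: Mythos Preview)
Your proof is correct and essentially identical to the paper's. Both arguments use the compact open hypothesis to identify $K$-invariants with $K$-coinvariants, deduce that $(-)^K$ is colimit-preserving and $\DGCat$-linear, commute it past the bar resolution computing the relative tensor product, and then invoke $(D(H)_K)^K \simeq \sH$ to reduce the unit to the tautological isomorphism $M \simeq M \underset{\sH}{\otimes} \sH$.
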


\begin{proof}
    By its identification with $K$ coinvariants, the functor of $K$ invariants 
    commutes with 
	colimits and is $\DGCat$-linear. It follows, e.g. via 
	using the bar resolution, that for any $\HK$-module $\scc$ the 
	unit 
	identifies with the composition
	\[
	\scc \simeq D(K 
	\backslash H / K) \underset{D(K\backslash H / K)}{\otimes} \scc \simeq \big(D(H/K) \underset{D(K \backslash H / K)}\otimes \sC\big)^K.
	\]
\end{proof}

\subsubsection{} \label{sss:flagvar}We continue to assume that $K$ is as in 
Section 
\ref{sss:compactopen}, and make the further assumption that $H/K$ is 
ind-proper (and in particular, of ind-finite type). This yields the following property of the counit, which will be 
important in what follows.

\begin{theo}\label{t:cuff} With our assumptions on $K$, for any 
$D(H)$-module 
$\sC$, the counit
	\begin{equation} \label{e:ffemb}
	  D(H/K) \underset{D(K \backslash H / K)}{\otimes} \sC^K 
	  \xrightarrow{\hspace{2mm} c \hspace{2mm}} \sC
	\end{equation}
	is fully faithful and admits a (continuous) $D(H)$-equivariant right adjoint 
	$c^R$. 
\end{theo}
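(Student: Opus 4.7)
Plan:

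I would build on Proposition \ref{p:fftensoringup}: since the left adjoint $L := - \otimes_{\HK} D(H)_K$ is fully faithful, the unit $\eta: \id \to RL$ of the adjunction $L \dashv R$, with $R = (-)^K$, is an equivalence. The triangle identity $R\epsilon \circ \eta R = \id_R$ then forces $R(c_\sC)$ to be the inverse of $\eta_{R\sC}$, and in particular to be an equivalence of categories for every $\sC \in D(H)\mod$. This is the key bridge connecting full faithfulness of $L$ to information about the counit, applied on $K$-invariants.

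First I would construct $c^R$. Since $c_\sC$ is cocontinuous between presentable dg-categories, it admits a right adjoint in $\DGCat$ by the adjoint functor theorem. To show this adjoint is continuous, equivalently that $c_\sC$ preserves compact objects, I would use ind-properness of $H/K$. The category $LR\sC$ is compactly generated by objects $M \boxtimes \sigma$, where $M = i_{\alpha,*}\underline{M}_\alpha$ is a pushforward of a coherent D-module from a $K$-stable closed subscheme $i_\alpha: X_\alpha \hookrightarrow H/K$ of finite type, necessarily proper by hypothesis, and $\sigma \in \sC^K$ is compact. The forgetful functor $\on{Oblv}: \sC^K \to \sC$ preserves compacts since its right adjoint $\on{Av}_*^K$ is continuous (Section \ref{s:inv2}), and convolution in $\sC$ with a D-module pushed forward from a proper subscheme of $H/K$ preserves compact objects; together these give that $c_\sC(M \boxtimes \sigma)$ is compact. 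The $D(H)$-equivariance of $c^R$ is then automatic from Section \ref{ss:catreps}.

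Next I would prove full faithfulness of $c_\sC$, i.e.\ that the unit $\eta': \id_{LR\sC} \to c^R c_\sC$ is an equivalence. Since both $c_\sC$ and $c^R$ are $D(H)$-equivariant, $\eta'$ is a $D(H)$-equivariant natural transformation. Applying $R$ yields $R(\eta'): \id_{R(LR\sC)} \to R(c^R) R(c_\sC)$, which is the unit of the induced adjunction $R(c_\sC) \dashv R(c^R)$ on invariants; since $R(c_\sC)$ is an equivalence by the opening observation, this unit is an equivalence. To upgrade from an equivalence on $K$-invariants to a global equivalence, I would use that $LR\sC$ is generated as a $D(H)$-module by the image of $\on{Oblv}: R(LR\sC) \to LR\sC$: the compact generators $M \boxtimes \sigma$ are obtained by acting with $M \in D(H/K)$ on $\delta_e \boxtimes \sigma$, where $\delta_e \in D(H/K)$ is the delta at the identity coset, and the latter object lies in the image of $\on{Oblv}$. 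The $D(H)$-equivariance of $\eta'$ together with its being an equivalence on $K$-invariants then yields that it is an equivalence on this compact generating family, hence on all of $LR\sC$.

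The main obstacle will be the compactness claim in the construction of $c^R$: the interplay between compact generation of the relative tensor product $D(H/K) \otimes_{\HK} \sC^K$ and the fact that the $D(H)$-action on $\sC$ is compact-preserving only for D-modules with proper support. Ind-properness of $H/K$ is the essential hypothesis that makes this go through, and is precisely what obstructs a similar statement in more general settings.
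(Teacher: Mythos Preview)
Your strategy has a genuine gap in the continuity argument for $c^R$. You assert that $LR\sC = D(H/K) \otimes_{\HK} \sC^K$ is compactly generated by objects $M \boxtimes \sigma$ with $M$ and $\sigma$ both compact, but this is not justified. Two issues arise. First, nothing in the hypotheses guarantees that $\sC^K$ is compactly generated, so there may be no compact $\sigma$ to speak of. Second, and more fundamentally, even when $\sigma$ is compact you need the insertion $\msf{ins}: D(H/K) \otimes \sC^K \to D(H/K) \otimes_{\HK} \sC^K$ to preserve compact objects, equivalently to admit a continuous right adjoint; this is exactly the step where ind-properness of $H/K$ enters nontrivially, and it does not follow from your discussion. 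Your argument that $c(M \boxtimes \sigma)$ is compact in $\sC$ establishes at most that $c \circ \msf{ins}$ preserves compacts, not that the intermediate objects are compact in the relative tensor product. Without continuity of $c^R$ its $D(H)$-equivariance is not available (the automatic strictness of adjoints in Section~\ref{ss:catreps} is a statement about continuous functors), and your full faithfulness argument---which relies both on the equivariance of $\eta'$ and on closure under colimits of the locus where $\eta'$ is an equivalence---does not go through.

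The paper handles this by first reducing, via $\DGCat$-linearity, to the universal case $\sC = D(H)$, and then analyzing the semisimplicial bar resolution: ind-properness of $H/K$ ensures each face map admits a continuous right adjoint, whence the colimit presenting the relative tensor product rewrites as a limit and $\msf{ins}$ acquires a continuous conservative right adjoint $\msf{ins}^R$. Full faithfulness is then obtained by an explicit comparison of the monads $\msf{ins}^R\msf{ins}$ and $p_* q^! q_* p^*$ via correspondences, and continuity of $c^R$ follows from the factorization $m^R \simeq \msf{ins}^R \circ c^R$ with $m^R = p_* q^!$ continuous and $\msf{ins}^R$ continuous and conservative. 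Your idea of deducing full faithfulness from the equivalence $(c_\sC)^K$ of Proposition~\ref{p:fftensoringup} together with generation of $LR\sC$ under the $D(H)$-action by the image of $\on{Oblv}$ is a valid and pleasant alternative to the monad comparison, but only \emph{after} continuity of $c^R$ has been secured; so one cannot avoid the bar-resolution analysis, or the reduction to the universal case that makes it tractable.
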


Informally, the theorem allows one to reconstruct a piece of the entire module from its $K$ invariants, which are often easier to analyze directly. The result and its proof are similar to work of Ben-Zvi--Gunningham--Orem \cite{BZO}; for a generalization see also the interesting paper of Yang \cite{yang}.

Before giving the proof, we mention an important special case. 

\begin{ex} \label{ex:moninvs}Suppose that $K = H$. In this case we refer to the 
obtained category as the monodromic invariants, i.e.
	\begin{align}
	    \sC^{K\hspace{-.6mm} \on{-mon}} &:=   \sC^K \underset{D(K\backslash K / K)} \otimes D(K/K)
	    \\\label{e:moninv} &\simeq 
	      \sC^K \underset{D(\on{pt}\hspace{-.5mm}/K)} \otimes \on{Vect} .  
	\end{align}
Theorem \ref{t:cuff} identifies this with the full subcategory of $\sC$ generated under colimits by the essential 
image of the forgetful functor 
$
    \on{Oblv}: \sC^K \rightarrow \sC.  
$	
%

So, in this case Theorem \ref{t:cuff} reconstructs a `formal 
neighborhood' of  
$\scc^K$ inside of $\scc$ using the action of $D( \on{pt} \hspace{-.5mm}/ K)$ on 
the former. This may be thought of as a version of de-equivariantization. \end{ex}

\begin{proof}[Proof of Theorem \ref{t:cuff}] The claimed equivariance of such a right adjoint is automatic for $D(H)$-modules, cf. Section \ref{ss:catreps}.

By the $\DGCat$-linearity and continuity of the appearing functors, it suffices to show that the $D(H) 
\otimes 
D(H)$-equivariant 
functor
	\begin{equation}\label{e:univcase} D(H/K) \underset{D(K 
	\backslash H / 
	K)}{\otimes} D(K 
	\backslash H) \rightarrow D(H)
	\end{equation}
	is fully faithful and admits a
	right 
	adjoint. Namely, the general case follows by tensoring over $D(H)$ with $\sC$.

	We will prove the assertions for \eqref{e:univcase} by an analysis of the situation on zero-simplices in the bar resolution, along 
	with 
	a monadicity argument. Consider the 
	convolution
	correspondence \[
	\begin{tikzcd}
	& H \overset{K}{\times} H \arrow{dl}[swap]{p} 
	\arrow{dr}{q} & \\
	H/K \times K \backslash H & & H.
	\end{tikzcd} \]
	Since $p$ is a $K$-torsor, the functor $p_*$ admits a 
	left adjoint $p^*$ 
	(see 
	\cite{rdm}, 
	Proposition 6.18.1). With 
	this, by definition the $D(H) \otimes D(H)$-equivariant 
	composition
	\[
	 D(H/K) \otimes D(K \backslash H) \rightarrow D(H/K) 
	 \underset{D(K 
	 \backslash H / K)}{\otimes} D(K \backslash H) \rightarrow 
	 D(H)
	\]
	 identifies equivariantly with the composition
	\begin{equation}
	m: D(H/K) \otimes D(K \backslash H) 
	\stackrel{p^*}{\longrightarrow} 
	D(H 
	\overset{K}{\times} H) \stackrel{q_*}{\longrightarrow} 
	D(H).
	\label{e:convcompuniv}
	\end{equation}
Since $q$ is 
ind-proper of ind-finite type, the $D(H) \otimes D(H)$-equivariant functor 
$q_*$ admits a right 
adjoint 
$q^!$. By composition, it follows that $m$ admits a right adjoint $m^R$.

	Let us now consider the situation after tensoring over $D(K 
	\backslash 
	H / 
	K)$. We first claim that the insertion 
	\[
	 \msf{ins}: D(H / K) \otimes D(K \backslash H) \rightarrow 
	 D(H / K) 
	 \underset{D(K \backslash H / K)}{\otimes}	D(K \backslash 
	 H)\]
	is monadic. More precisely, we claim it admits a conservative right adjoint 
	$\msf{ins}^R$.

	To see this, consider the presentation of $ D(H / K) 
	\underset{D(K 
		\backslash H / K)}{\otimes} D(K 
	\backslash H) $ via the semisimplicial bar resolution
	%
	\begin{equation} \label{e:barres}
	\begin{tikzcd}
	   \cdots  \arrow[r, shift left = 2] \arrow[r] \arrow[r, shift right = 2] 
	   &  D(H/K) \otimes D(K  \backslash H / K) \otimes 
	   D(K 
	   \backslash 
	  H)\arrow[r, shift left=1] \arrow[r, shift 
	  right=1] &   D(H/K) \otimes D(K 
	  \backslash H).
	 	\end{tikzcd}
	\end{equation}
	It follows from the preceding paragraph that each face map admits a right adjoint. Under the canonical equivalence	of the colimit with the limit over its right adjoints, $\msf{ins}^R$ 
	identifies with evaluation on zero-simplices, which yields the desired 
	claim.

	In term of the bar resolution, \eqref{e:univcase} is given by the 
	augmented semisimplicial 
	dg-category
	\[
	\begin{tikzcd}
	\cdots D(H/K) \otimes D(K 
	\backslash H / K) \otimes D(K \backslash 
	H) \arrow[r, shift left=1] \arrow[r, shift right=1]
	&
	D(H/K) \otimes D(K \backslash H) 
	\arrow[r, "q_*p^*"] & D(H).
	\end{tikzcd}
	\]
    Thus the functor \eqref{e:univcase} 
	amounts to a morphism of monads \[ \msf{ins}^R \msf{ins} \longrightarrow 
	p_*q^!q_*p^*. \]  We will prove 
	that this is an isomorphism, which is equivalent to the fully faithfulness of 
	\eqref{e:univcase}.
	
	Corollary C.2.3 in \cite{1affine} says that the 
	monad $\msf{ins}^R\msf{ins}$ identifies with
	\begin{align*}
	D(H/K) \otimes D(K \backslash H) 
	&\xrightarrow{\id_{D(H/K)} 
	\otimes 
	\msf{act}_{D(K 
			\backslash H)}^R} D(H/K) \otimes D(K \backslash H / K) \otimes 
			D(K 
			\backslash H) \\ 
	&\xrightarrow{\msf{act}_{D(H/K)} \otimes \id_{D(K \backslash 
	H)}} 
	D(H/K) 
	\otimes 
	D(K 
	\backslash H)
	\end{align*}
	as an endofunctor. Thus it is given by pull-push along the correspondence \[
	\begin{tikzcd}[column sep ={3cm,between origins}]
	& & H \overset{K}{\times} H \overset{K}{\times} H \arrow{dl} \arrow{dr} & & 
	\\
	& H/K \times K \backslash H \overset{K}{\times} H \arrow{dl} \arrow{dr} & & 
	H 
	\overset{K}{\times} H/K \times K \backslash H \arrow{dl} \arrow{dr} & \\
	H/K \times K \backslash H & & H/K \times K \backslash H/K \times K 
	\backslash H 
	& & H/K \times K \backslash H.
	\end{tikzcd} \]
	But the endofunctor $ 
	p_*q^!q_*p^*$ 
	is given by pull-push along the isomorphic correspondence \[
	\begin{tikzcd}[column sep ={3cm,between origins}]
	& & H \overset{K}{\times} H \overset{K}{\times} H \arrow{dl} \arrow{dr} & & 
	\\
	& H \overset{K}{\times} H \arrow{dl} \arrow{dr} & & H \overset{K}{\times} H 
	\arrow{dl} \arrow{dr} & \\
	H/K \times K \backslash H & & H & & H/K \times K \backslash H,
	\end{tikzcd} \]
	and a diagram chase shows that the resulting natural isomorphism 
	$\msf{ins}^R\msf{ins} 
	\tilde{\to} p_*q^!q_*p^*$ 
	identifies 
	with the aforementioned morphism of monads. This completes the proof of the 
	fully faithfulness of \eqref{e:univcase}.

	It remains to furnish \eqref{e:univcase} with a right adjoint $\eqref{e:univcase}^R$. 
	To see this, recall that the morphism $m$ identifies with the 
	composition 
	\[ D(H / K) \otimes D(K \backslash H) \xrightarrow{\msf{ins}} 
	D(H / K) 
	\underset{D(K \backslash H / K)}{\otimes}  D(K \backslash H)  
	\xrightarrow{\eqref{e:univcase}} D(H).
	\]
	%
	%
	The desired continuity of $\eqref{e:univcase}^R$, which exists as a discontinuous functor by the adjoint functor theorem, follows via the conservativity 
	and 
	continuity of 
	$\msf{ins}^R$ from the continuity of $m^R$. 
	\end{proof}

\subsubsection{}Let us deduce a useful consequence of Theorem 
\ref{t:cuff}. 
In particular, we keep its assumptions on $K$.

\begin{cor} \label{c:rad} For $K$ as in Theorem \ref{t:cuff}, the functor 
$D(H/K) 
\underset{D(K\backslash H / K)}{\otimes} - $ 
is also right adjoint 
to the functor of $K$ invariants, with unit 
	\[
	  \sC \xrightarrow{ \hspace{1mm} c^R \hspace{1mm}} D(H/K) 
	  \underset{D(K 
	  \backslash H / K)}{\otimes} 
	 \sC^K. 
	\]
\end{cor}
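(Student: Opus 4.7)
The plan is to exhibit $c^R$ as the unit of an adjunction $(-)^K \dashv L$, where I write $L := D(H/K) \underset{D(K\backslash H/K)}{\otimes} -$ for brevity. Since $L$ is already fully faithful by Proposition \ref{p:fftensoringup}, the unit $\eta\colon \on{id}_{\on{mod-}\sH} \to (-)^K \circ L$ of the existing adjunction $L \dashv (-)^K$ is an isomorphism. The natural candidate for the counit of the new adjunction is therefore $\eta^{-1}$; verifying the corollary then amounts to checking the two triangle identities for the pair $(c^R, \eta^{-1})$.

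For the first triangle identity, I would apply $(-)^K$ to the relation $c^R \circ c \simeq \on{id}_{LR}$, which holds because $c$ is fully faithful (Theorem \ref{t:cuff}) and $c^R$ is its right adjoint. Combined with the triangle identity $(c_\sC)^K \circ \eta_{\sC^K} = \on{id}$ for $L \dashv (-)^K$, this forces $(c^R_\sC)^K = \eta_{\sC^K}$, so that $\eta^{-1}_{\sC^K} \circ (c^R_\sC)^K = \on{id}_{\sC^K}$ as required. For the second triangle identity, I would restrict to the essential image of $L$: the functor $c_{L\sD}\colon LRL\sD \to L\sD$ is fully faithful by Theorem \ref{t:cuff}, and the triangle identity for $L \dashv (-)^K$ gives it a right inverse $L(\eta_\sD)$, which is moreover an isomorphism since $\eta$ is. Hence $c_{L\sD}$ is an equivalence with inverse $L(\eta_\sD)$, whence its right adjoint satisfies $c^R_{L\sD} = L(\eta_\sD)$. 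This immediately yields $L(\eta^{-1}_\sD) \circ c^R_{L\sD} = \on{id}_{L\sD}$.

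The main obstacle is the bookkeeping: the arguments above are classical at the level of ordinary categories, but the statement lives in the $(\infty,2)$-categorical setting of $D(H)\mod$, so one must ensure the triangle identities are upgraded to coherent equivalences rather than mere isomorphisms on objects. This is handled by observing that both $\eta$ and $c^R \circ c$ are obtained from the corresponding $(\infty,2)$-adjunction data, so the identifications $(c^R_\sC)^K \simeq \eta_{\sC^K}$ and $c^R_{L\sD} \simeq L(\eta_\sD)$ propagate to the required coherent homotopies. The continuity and $D(H)$-equivariance of $L \circ (-)^K$ as a right adjoint functor are inherited directly from those of $c^R$ established in Theorem \ref{t:cuff}.
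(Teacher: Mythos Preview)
Your argument is correct. You verify the triangle identities for the candidate unit $c^R$ and counit $\eta^{-1}$ directly, using only the two inputs already in hand: Proposition \ref{p:fftensoringup} (so $\eta$ is invertible) and Theorem \ref{t:cuff} (so $c$ is fully faithful with continuous right adjoint $c^R$). Each step you wrote checks out, and your remark on coherence is adequate, since in the $\infty$-categorical setting an adjunction is determined by a unit transformation satisfying the relevant universal property, which your identifications $(c^R_\sC)^K \simeq \eta_{\sC^K}$ and $c^R_{L\sD} \simeq L(\eta_\sD)$ supply.

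The paper takes the other standard route: it establishes the natural equivalence of mapping categories $\Hom_{\sH\mod}(\sC^K,\sM) \simeq \Hom_{D(H)\mod}(\sC, L\sM)$ directly, by composing the fully faithful $L$ with precomposition by $c^R_\sC$. Full faithfulness of the latter is the general fact that precomposition with a functor admitting a fully faithful left adjoint is itself fully faithful; essential surjectivity is read off from naturality of $c^R$ together with the observation that $c^R_{L\sM}$ is invertible. The content is the same as yours---both arguments pivot on $\eta$ being invertible and on $c^R_{L\sD}$ being an equivalence---but the paper packages it as a Yoneda-style Hom-equivalence, whereas you unwind the unit--counit relations explicitly. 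Your version has the minor advantage of making the identification of the unit with $c^R$ transparent from the outset.
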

\begin{proof} For a $D(H)$-module $\sC$ and $D(K\backslash H / 
K)$-module 
$\sM$, we have a map 
	\begin{align*}
\Hom_{D(K \backslash H / K)\mod}(\sC^K, \sM) &\overset{\ref{p:fftensoringup}}{\simeq} 
\Hom_{D(H)\mod}\big(D(H/K) 
\underset{D(K\backslash H / K)}{\otimes} \sC^K,D(H/K) 
\underset{D(K\backslash H / K)}{\otimes}  \sM \big) \\ & \hspace{1mm}\rightarrow 
\Hom_{D(H)\mod}\big(\sC, D(H/K) \underset{D(K\backslash H / 
K)}{\otimes}  \sM\big).
	\end{align*}
By Theorem \ref{t:cuff}, the latter arrow is a full embedding. Its 
essential surjectivity follows from the naturality of $c^R$, as it is an 
equivalence when applied to $D(H/K) \underset{D(K\backslash H / 
K)}{\otimes}  
\sM$. 
\end{proof}

\begin{re} \label{c:tensor=hom} Note that Corollary \ref{c:rad} can be rephrased as a natural isomorphism of functors
	\begin{equation}
	\label{e:ideofadjs}
	D(H/K) 
	\underset{D(K\backslash H / K)}{\otimes} (-)   \hspace{2mm} 
	\simeq\hspace{2mm}  \Hom_{D(K \backslash 
	H 
	/ K)\mod}( D(H/K), -).
	\end{equation}
\end{re}

\subsection{Generation}

\subsubsection{} Let $H$ and $K$ be as in Theorem \ref{t:cuff}. In the remainder of this section, we collect some tools which allow one to control a categorical representation from its $K$ invariants. The basic definition is the following.  
\label{s:genbykinvs}

\begin{defn} \label{d:kgen} For a $D(H)$-module $\scc$, we write $\scc^{K\hspace{-.6mm} \on{-gen}}$ for the {\em submodule of $\scc$ 
generated by its 
$K$ invariants}, i.e. the essential image of the counit \eqref{e:ffemb}:
	\[
	     D(H/K) \underset{D(K\backslash H / K)}{\otimes} \sC^K \hspace{1mm}
	     \overset \sim \longrightarrow \hspace{1mm}\scc^{K\hspace{-.6mm} \on{-gen}} \hspace{1mm} \subset \hspace{1mm} \scc. 
	\]
We say that $\scc$ is {\em generated by its 
$K$ invariants} if $\scc^{K\hspace{-.6mm} \on{-gen}} = \scc$, i.e. if the inclusion is an equivalence. 
	\label{d:genkinvs}
\end{defn}
\subsubsection{} Suppose ${\Theta}: \scc \rightarrow \sD$ is a morphism of 
$D(H)$-modules, and consider  the associated 
morphism of $K$-invariant categories $${\Theta}^K: \scc^K \rightarrow \sD^K.$$If ${\Theta}$ is fully faithful, it formally follows that ${\Theta}^K$ is as well. Indeed, given a general monoidal category $\sM$, a fully faithful map of 
$\sM$-modules $\sC 
\rightarrow \sD$, and another $\sM$-module $\sS$, the map 
\begin{equation}
\label{e:ffembs}
   \Hom_{\sM\mod}(\sS, \sC) \rightarrow \Hom_{\sM\mod}(\sS, \sD)
\end{equation}
is a fully faithful embedding.

The following partial converse, which we will make significant use of in what follows, allows one to check fully faithfulness, and in particular equivalences, on 
`highest weight vectors'.

\begin{cor} \label{c:checkonhwvs} If $\scc$ is generated by $K$ invariants and ${\Theta}^K$ is fully faithful, then ${\Theta}$ also is fully faithful.
\end{cor}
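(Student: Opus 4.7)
My plan is to exploit the naturality of the counit \eqref{e:ffemb} together with Theorem \ref{t:cuff} and the tensor-Hom identification of Remark \ref{c:tensor=hom}. Specifically, naturality of the counit gives a commutative square in $D(H)\mod$:
\[
\begin{tikzcd}[column sep=huge]
D(H/K) \underset{D(K \backslash H / K)}{\otimes} \sC^K \arrow[r, "\id \otimes \Theta^K"] \arrow[d, "c_\sC"'] & D(H/K) \underset{D(K \backslash H / K)}{\otimes} \sD^K \arrow[d, "c_\sD"] \\
\sC \arrow[r, "\Theta"'] & \sD.
\end{tikzcd}
\]
The hypothesis that $\sC$ is generated by its $K$ invariants says precisely that the left vertical arrow $c_\sC$ is an equivalence. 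Theorem \ref{t:cuff} says that $c_\sD$ is fully faithful. Therefore, modulo the equivalence $c_\sC$, the map $\Theta$ is identified with $c_\sD \circ (\id \otimes \Theta^K)$, and it suffices to show that the top horizontal arrow $\id \otimes \Theta^K$ is fully faithful.

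For this, I would use Remark \ref{c:tensor=hom}: the functor $D(H/K) \otimes_{D(K\backslash H/ K)} (-)$ is naturally isomorphic to $\Hom_{D(K \backslash H / K)\mod}(D(H/K), -)$. Applying the general principle \eqref{e:ffembs} to the monoidal category $\sM = D(K \backslash H/ K)$, the test module $\sS = D(H/K)$, and the fully faithful morphism $\Theta^K: \sC^K \to \sD^K$ of $D(K\backslash H/K)$-modules (which is fully faithful by assumption), we obtain that
\[
\Hom_{D(K\backslash H / K)\mod}(D(H/K), \sC^K) \longrightarrow \Hom_{D(K\backslash H / K)\mod}(D(H/K), \sD^K)
\]
is fully faithful. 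Via the tensor-Hom identification, this is exactly the fully faithfulness of $\id \otimes \Theta^K$.

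The conclusion follows by composing two fully faithful functors. The only potentially delicate point is the verification of the commutativity of the square, which is a naturality statement for the counit of the adjunction recorded in Section \ref{ss:adjs}; this is purely formal. In particular, there is no need to analyze the situation at the level of compact generators or to invoke $t$-structure arguments — everything reduces to the abstract nonsense of the adjunctions already established.
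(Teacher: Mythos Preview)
Your proof is correct and essentially identical to the paper's argument. The paper phrases it as a factorization $\sC^{K\hspace{-.6mm}\on{-gen}} \to \sD^{K\hspace{-.6mm}\on{-gen}} \to \sD$ rather than via the commutative square, but this is exactly your square with $c_\sC$ inverted; both then invoke Theorem \ref{t:cuff} for the second arrow and Remark \ref{c:tensor=hom} together with \eqref{e:ffembs} for the first.
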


\begin{proof} 

By our assumption on $\sC$, we obtain a canonical factorization of $\Theta$ as a composition 
\begin{equation}  \label{e:2stepsff}
  \sC^{K\hspace{-.6mm} \on{-gen}} \rightarrow  
  \sD^{K\hspace{-.6mm} \on{-gen}} \rightarrow \sD.
\end{equation}
By Theorem \ref{t:cuff} the second arrow of \eqref{e:2stepsff} is fully 
faithful, so it is enough to show the first arrow of \eqref{e:2stepsff} is also fully 
faithful. However, as in Remark \ref{c:tensor=hom} we may rewrite this as 
\[
   \Hom_{D(K \backslash H / K)\mod}\big( D(H/K), \sC^K\big) \rightarrow 
   \Hom_{D(K 
   \backslash H / K)\mod}\big( D(H/K), \sD^K\big),
\] 	
hence its fully faithfulness follows from \eqref{e:ffembs}. 	
\end{proof}

 Let us mention two other consequences of Theorem \ref{t:cuff}, namely the favorable interaction of generation by $K$ invariants with passing to subcategories and gluing. Since we do not use these in what follows, we omit their formal deductions.

\begin{cor} If $\scc$ is generated by $K$ invariants, and $\mathring{\scc} 
\rightarrow \scc$ is a fully faithful embedding of $D(H)$-modules, then 
$\mathring{\scc}$ is generated by $K$ invariants. \end{cor}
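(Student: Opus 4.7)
The plan is to use the right adjoint $i^R$ to $i: \mathring{\sC} \hookrightarrow \sC$ to deduce essential surjectivity of $c_{\mathring{\sC}}$; fully faithfulness will then follow directly from Theorem \ref{t:cuff}. First, I would note that since $i$ is a fully faithful $D(H)$-equivariant embedding, by Section \ref{ss:catreps} it admits a continuous $D(H)$-equivariant right adjoint $i^R$. The fully faithfulness of $i$ as a left adjoint means the unit $\on{id}_{\mathring{\sC}} \to i^R i$ is an isomorphism; in particular $i^R$ is essentially surjective. Taking $K$-invariants yields a right adjoint $(i^R)^K: \sC^K \to \mathring{\sC}^K$ of right $\HK$-modules.

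Next, applying naturality of the counit of \eqref{e:invs} to the $D(H)$-equivariant morphism $i^R: \sC \to \mathring{\sC}$ gives the commutative square
\[
\begin{tikzcd}
D(H/K) \underset{D(K\bk H / K)}{\otimes} \sC^K \arrow[r, "c_\sC"] \arrow[d, "\on{id} \otimes (i^R)^K"'] & \sC \arrow[d, "i^R"] \\
D(H/K) \underset{D(K\bk H / K)}{\otimes} \mathring{\sC}^K \arrow[r, "c_{\mathring{\sC}}"] & \mathring{\sC},
\end{tikzcd}
\]
where I write $c_\sC$ and $c_{\mathring{\sC}}$ for the counits of Theorem \ref{t:cuff} applied to $\sC$ and $\mathring{\sC}$, respectively.

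By hypothesis $\sC = \sC^{K\on{-gen}}$, so $c_\sC$ is an equivalence; combined with the essential surjectivity of $i^R$, the composition $i^R \circ c_\sC$ along the top and right of the square is essentially surjective. By commutativity of the diagram, the composition $c_{\mathring{\sC}} \circ (\on{id} \otimes (i^R)^K)$ along the left and bottom is also essentially surjective, which forces $c_{\mathring{\sC}}$ to be essentially surjective. Together with the fully faithfulness given by Theorem \ref{t:cuff}, $c_{\mathring{\sC}}$ is then an equivalence, i.e., $\mathring{\sC}$ is generated by its $K$-invariants. Given Theorem \ref{t:cuff} and the results of Section \ref{ss:adjs}, the argument is essentially a diagram chase, and I do not anticipate a substantive obstacle; the only point meriting care is the $D(H)$-equivariance of $i^R$, which is a general feature of $D(H)$-module functors recalled in Section \ref{ss:catreps}.
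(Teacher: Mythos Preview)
Your argument has a genuine gap: you assume that the fully faithful embedding $i:\mathring{\sC}\hookrightarrow\sC$ admits a \emph{continuous} right adjoint $i^R$, but nothing in the hypotheses or in Section~\ref{ss:catreps} guarantees this. A fully faithful continuous functor between presentable dg-categories has a right adjoint by the adjoint functor theorem, but that adjoint need not preserve colimits, hence need not be a morphism in $\DGCat$. The passage you cite from Section~\ref{ss:catreps} says only that an adjoint which already lies in $\DGCat$ acquires a canonical datum of $D(H)$-equivariance; it does not manufacture continuous adjoints. Without continuity, $i^R$ is not a morphism of $D(H)$-modules, so neither the naturality square for the counit $c$ nor the tensor product $\on{id}\otimes (i^R)^K$ over $\sH$ makes sense as written.

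The paper's intended argument avoids this entirely via a Yoneda-style reduction using Corollary~\ref{c:rad}. One shows that for every $D(H)$-module $\sS$ the fully faithful map
\[
\Hom_{D(H)\mod}(\sS,\mathring{\sC}^{K\on{-gen}})\longrightarrow \Hom_{D(H)\mod}(\sS,\mathring{\sC})
\]
is essentially surjective. Indeed, composing any $D(H)$-map $\sS\to\mathring{\sC}$ with $i$ lands in $\sC=\sC^{K\on{-gen}}$; by Corollary~\ref{c:rad} such a map is determined by the induced $\sH$-map $\sS^K\to\sC^K$, which visibly factors through $\mathring{\sC}^K$ and therefore arises from a map $\sS\to\mathring{\sC}^{K\on{-gen}}$. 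This uses only the forward embedding $i$, never a right adjoint to it. Your diagram chase would go through under the additional hypothesis that $i$ admits a continuous right adjoint (which does hold in the specific applications later in the paper), but the corollary as stated does not assume this.
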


\begin{cor} Suppose $\sM'$, $\sM$ and $\sM''$ are $D(H)$-modules fitting 
into a 
recollement 
	\[
	     \sM' \rightleftarrows \sM \rightleftarrows \sM''.
	\]
	Then $\sM$ 
	is generated by $K$ invariants if and only if $\sM'$ and $\sM''$ are. 
\end{cor}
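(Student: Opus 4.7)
The plan is to leverage the $D(H)$-equivariance and continuity of the recollement functors together with the closure properties of the subcategory $\sM^{K\hspace{-.6mm} \on{-gen}} \subset \sM$. Write the recollement data as $i_*: \sM' \rightleftarrows \sM: i^*, i^!$ and $j_!, j_*: \sM'' \rightleftarrows \sM: j^*$, with the functorial fiber sequence $j_! j^* X \to X \to i_* i^* X$ for $X \in \sM$. All six functors are $D(H)$-equivariant by the automatic equivariance of adjoints of $D(H)$-equivariant functors noted in Section \ref{ss:catreps}, and the four functors $i^*, i_*, j_!, j^*$ are continuous, as they are either left adjoints or admit continuous right adjoints ($i^!$ and $j^*$, respectively). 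By Theorem \ref{t:cuff}, the counit $c$ is a fully faithful embedding admitting a continuous right adjoint, so $\sM^{K\hspace{-.6mm} \on{-gen}}$ is a $D(H)$-stable full subcategory closed under colimits, and hence closed under cofibers and extensions.

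The key observation is that any continuous $D(H)$-equivariant functor $F: \sC \to \sD$ sends $\sC^{K\hspace{-.6mm} \on{-gen}}$ into $\sD^{K\hspace{-.6mm} \on{-gen}}$. Indeed, the induced functor $F^K: \sC^K \to \sD^K$ intertwines the forgetful functors, so $F$ maps the essential image of $\on{Oblv}_\sC$ into that of $\on{Oblv}_\sD$, and $F$ commutes with the $D(H)$-colimit operations presenting $K$-generation via Theorem \ref{t:cuff}.

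For the forward direction, suppose $\sM = \sM^{K\hspace{-.6mm} \on{-gen}}$. Given $X'' \in \sM''$, fully faithfulness of $j_!$ gives $X'' \simeq j^* j_! X''$; since $j_! X'' \in \sM$ is $K$-generated, the key observation applied to $j^*$ yields $X'' \in (\sM'')^{K\hspace{-.6mm} \on{-gen}}$. The argument for $\sM'$ is analogous, via $X' \simeq i^* i_* X'$ and the functor $i^*$. For the reverse direction, suppose $\sM'$ and $\sM''$ are $K$-generated; for any $X \in \sM$, the key observation applied to $j_!$ and to $i_*$ yields $j_! j^* X,\, i_* i^* X \in \sM^{K\hspace{-.6mm} \on{-gen}}$, and closure under extensions applied to the fiber sequence above then forces $X \in \sM^{K\hspace{-.6mm} \on{-gen}}$.

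The main (mild) subtlety is the continuity of $i_*$, which relies on the existence of its right adjoint $i^!$ in the recollement data; beyond this the argument is formal, using only Theorem \ref{t:cuff}, the $D(H)$-equivariance of the recollement functors, and the standard stability of fiber sequences.
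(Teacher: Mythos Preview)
The paper explicitly omits the proof of this corollary, stating just before it: ``Since we do not use these in what follows, we omit their formal deductions.'' Your argument is correct and is precisely the sort of formal deduction the authors had in mind: the key observation that a continuous $D(H)$-equivariant functor carries $K$-generated subcategories to $K$-generated subcategories, together with the standard recollement triangle $j_!j^*X \to X \to i_*i^*X$, gives both directions cleanly.

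Two minor remarks. First, in your continuity bookkeeping you write that $j^*$ ``admits continuous right adjoint $j^*$,'' which is a slip for $j_*$; in any case $j^*$ is already a left adjoint. Second, your appeal to ``closure under extensions'' deserves a word: $\sM^{K\hspace{-.6mm}\on{-gen}}$ is the essential image of a fully faithful exact functor from a stable category, hence is a stable full subcategory closed under shifts, so closure under extensions (equivalently, under fibers) follows. Note also that the forward direction could alternatively be obtained by invoking the preceding (also unproved) corollary on fully faithful $D(H)$-submodules, applied to $i_*$ and $j_!$; your route via $i^*$ and $j^*$ is equally good.
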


\subsubsection{}Finally, let us note the compatibility of $K$ generation and 
duality. Recall that if $\scc$ is a $D(H)$-module which is dualizable as an 
object of $\DGCat$, its dual $\scc^\vee$ carries a canonical structure of 
$D(H)$-module.

\begin{cor} \label{c:dualskgen}For $H$ and $K$ as in Theorem \ref{t:cuff}, and a dualizable 
$D(H)$-module $\scc$, there are canonical isomorphisms 
	\[
	    (\scc^{K\hspace{-.6mm} \on{-gen}})^\vee \simeq (\scc^\vee)^{K\hspace{-.6mm} \on{-gen}}.
	\]
\end{cor}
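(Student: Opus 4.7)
The plan is to dualize the tensor-product presentation
\[
\sC^{K\hspace{-.6mm} \on{-gen}} \simeq D(H/K) \underset{\sH}{\otimes} \sC^K, \qquad \sH := D(K\backslash H/K),
\]
of Definition \ref{d:genkinvs} factor by factor, using two ingredients.

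The first ingredient is that $K$-invariants commute with duality on dualizable $D(H)$-modules: there is a canonical $\sH$-equivariant equivalence $(\sC^K)^\vee \simeq (\sC^\vee)^K$. This follows from the identification of $K$-invariants with $K$-coinvariants available for compact open $K$ (Section \ref{s:inv2}): writing $\sC^K \simeq \sC_K = \Vect \underset{D(K)}{\otimes} \sC$ and dualizing the resulting relative tensor product, using the self-duality of $D(K)$ as a $\DGCat$-linear category (Section \ref{ss:dualsdmods}) and of $\Vect$, yields the claim.

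The second ingredient is that the functor $G_K := D(H/K) \otimes_{\sH} -$ intertwines duality on dualizable $\sH$-modules, i.e. $G_K(M)^\vee \simeq G_K(M^\vee)$ naturally in $M$. The crucial input here is Corollary \ref{c:rad}, which identifies $G_K$ with both the left and the right adjoint of the $K$-invariants functor. This ambidexterity is an incarnation of the self-duality of $D(H/K)$ as a $(D(H),\sH)$-bimodule --- obtained by combining the self-duality of $D(H/K)$ in $\DGCat$ (Section \ref{ss:dualsdmods}) with the inversion antiinvolutions on $D(H)$ and on $\sH$ of Section \ref{sss:compactopen} --- and is precisely what allows one to pass $(-)^\vee$ through the relative tensor product. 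Combining,
\[
(\sC^{K\hspace{-.6mm} \on{-gen}})^\vee \simeq G_K(\sC^K)^\vee \simeq G_K((\sC^K)^\vee) \simeq G_K((\sC^\vee)^K) \simeq (\sC^\vee)^{K\hspace{-.6mm} \on{-gen}},
\]
with the chain of equivalences visibly functorial in $\sC$.

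The main technical obstacle is making the second ingredient precise: one must match two a priori different $D(H)$-module structures on $G_K(M)^\vee$ --- one coming from the contravariant action of $(-)^\vee$ on $D(H)$-modules, the other from $G_K$ applied to the $\sH$-module $M^\vee$. This reduces, as indicated above, to unwinding the bimodule self-duality of $D(H/K)$ under the conspiring inversion antiinvolutions. An alternative that keeps the argument closer to the framework of Theorem \ref{t:cuff} is to dualize the adjoint pair $(\iota, \iota^R)$, where $\iota: \sC^{K\hspace{-.6mm} \on{-gen}}\hookrightarrow\sC$ is the coreflective embedding: the full faithfulness of $\iota$ dualizes to the full faithfulness of $(\iota^R)^\vee: (\sC^{K\hspace{-.6mm} \on{-gen}})^\vee \hookrightarrow \sC^\vee$ in $D(H)\mod$, and the first ingredient shows that this embedding is an equivalence on $K$-invariants; one then identifies its essential image with $(\sC^\vee)^{K\hspace{-.6mm} \on{-gen}}$ by showing, via the tensor-product presentation, that $(\sC^{K\hspace{-.6mm} \on{-gen}})^\vee$ is itself generated by its $K$-invariants.
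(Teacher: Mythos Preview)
Your alternative approach is essentially the paper's, but you miss the one move that makes it clean. You correctly observe that dualizing the coreflective embedding $\iota:\sC^{K\hspace{-.6mm}\on{-gen}}\hookrightarrow\sC$ yields a fully faithful $D(H)$-equivariant embedding $(\iota^R)^\vee:(\sC^{K\hspace{-.6mm}\on{-gen}})^\vee\hookrightarrow\sC^\vee$, and that under $(\sC^K)^\vee\simeq(\sC^\vee)^K$ the image contains all $K$-invariants of $\sC^\vee$. This already gives one containment
\[
(\sC^\vee)^{K\hspace{-.6mm}\on{-gen}}\ \subset\ (\sC^{K\hspace{-.6mm}\on{-gen}})^\vee.
\]
At this point the paper does \emph{not} try to show directly that $(\sC^{K\hspace{-.6mm}\on{-gen}})^\vee$ is generated by its $K$-invariants; instead it simply reruns the same argument with $\sC^\vee$ in place of $\sC$, obtaining
\[
\sC^{K\hspace{-.6mm}\on{-gen}}\ \subset\ \bigl((\sC^\vee)^{K\hspace{-.6mm}\on{-gen}}\bigr)^\vee,
\]
and the two inclusions together force both to be equalities. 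Your proposed endgame --- deducing generation of $(\sC^{K\hspace{-.6mm}\on{-gen}})^\vee$ ``via the tensor-product presentation'' --- circles back to your first approach, which you yourself flag as the main technical obstacle.

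As for that first approach: dualizing the relative tensor product $D(H/K)\otimes_{\sH}\sC^K$ factorwise is correct and is indeed governed by the bimodule self-duality of $D(H/K)$ (equivalently, the semi-rigidity of $\sH$), but carrying it out carefully --- in particular matching the two $D(H)$-module structures on $G_K(M)^\vee$ --- is genuinely more work than the symmetry argument above. So your proof is salvageable either way, but the quickest fix is to drop the tensor-product dualization entirely and close with the $\sC\leftrightarrow\sC^\vee$ swap.
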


\begin{proof} Recall the adjunction of Theorem \ref{t:cuff}, i.e. $\scc^{K\hspace{-.6mm} \on{-gen}} 
	\rightleftarrows \scc.$ The fully faithfulness of the left adjoint and the 
	dualizability of $\scc$ straightforwardly imply the dualizability of 
	$\scc^{K\hspace{-.6mm} \on{-gen}}$. By our assumption on $K$, duality interchanges the adjunctions 
	\begin{equation} \label{e:intadjs}
	\on{Oblv}: \scc^K \rightleftarrows \scc: \on{Av}^H_* \quad \quad 
	\text{and} 
	\quad \quad  \on{Oblv}: (\scc^\vee)^K \rightleftarrows \scc^\vee: 
	\on{Av}^H_*.
	\end{equation}	
	It follows that, viewed as full subcategories of $\scc^\vee$, we have 
	\begin{equation} \label{e:inc1}
	     (\scc^\vee)^{K\hspace{-.6mm} \on{-gen}} \subset (\scc^{K\hspace{-.6mm} \on{-gen}})^\vee.
	\end{equation}
	Interchanging the roles of $\scc$ and $\scc^\vee$, we similarly have 
	\begin{equation} \label{e:inc2}
	     \scc^{K\hspace{-.6mm} \on{-gen}} \subset ( (\scc^\vee)^{K\hspace{-.6mm} \on{-gen}})^\vee.
	\end{equation}
	Comparing \eqref{e:inc1} and \eqref{e:inc2}, it follows both inclusions are 
	equivalences, as desired. 
\end{proof}

\section{A category of Lie algebra representations}
\label{s:catliereps}
In this section, we record the definition and first properties of the 
categories of Lie algebra representations considered in this paper. 

\subsubsection{} Recall that $G$ denotes a reductive group or loop group, $B$ denotes a Borel subgroup of $G$, $\fg$ denotes the Lie algebra of $G$, and $\fg\mod$ denotes its category of
representations, cf. Sections \ref{ss:grpslies} and \ref{ss:liealgreps} for 
our precise conventions. 

\subsubsection{} Recall that $\fg\mod$ carries a canonical action of 
$D(G)$, cf. Section \ref{ss:liealgreps}. So, 
as in Definition \ref{d:kgen} we may consider 
	\[
	   \fg\mod^{B\hspace{-.6mm} \on{-gen}} := D(G/B) \underset{D(B \backslash G / 
	   B)}\otimes 
	   \fg\mod^B 
	   \hspace{.2cm} \subset \hspace{.2cm} \fg\mod. 
	\]

We next isolate certain submodules. To do so, we begin with some preliminary 
generalities.  

\subsubsection{} Given a $D(G)$-module $\sC$ and object $\xi$ of $\sC$, 
one 
may 
consider the $D(G)$-submodule generated by $\xi$, i.e. the minimal full 
subcategory of $\sC$ stable under the action of $D(G)$ which contains 
$c$. 
Explicitly, convolution with $\xi$ gives an equivariant functor
\begin{equation} \label{e:convc}
    D(G) \rightarrow  \sC,
\end{equation}
and the desired full subcategory of $\sC$ is the closure of the essential image 
of \eqref{e:convc} under colimits.

\begin{lemma} \label{l:gencomps}Suppose that $K$ is a subgroup of $G$ for which 
$G/K$ is 
ind-proper, and that $\xi$ is equipped with a datum of $K$-equivariance. If we 
write 
$\sS$ for 
the $D(K \backslash G / K)$-submodule of $\sC^K$ generated by $\xi$, 
then 
the 
$D(G)$-submodule generated by $\xi$ is canonically equivalent to 
\begin{equation}
   D(G/K) \underset{D(K\backslash G / K)}{\otimes} \sS.
\end{equation}
\end{lemma}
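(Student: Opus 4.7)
The plan is to produce a $D(G)$-equivariant fully faithful embedding
\[ \sN := D(G/K) \underset{D(K\backslash G/K)}{\otimes} \sS \;\hookrightarrow\; \sC \]
whose essential image coincides with the $D(G)$-submodule of $\sC$ generated by $\xi$.

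First I would descend the convolution functor $D(G) \to \sC$, $\delta \mapsto \delta \star \xi$, to a $D(G)$-equivariant functor $F: D(G/K) \to \sC$ sending the unit to $\xi$. This is possible because the $K$-equivariance of $\xi$ renders the original functor right $K$-invariant, and for the compact open subgroup $K$ the coinvariants $D(G)_K$ canonically identify with $D(G/K)$ (cf.\ Section \ref{s:inv2}). By the definition recalled just before the lemma, the essential image of $F$, closed under colimits, is precisely the $D(G)$-submodule of $\sC$ generated by $\xi$.

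Second, I would factor $F$ canonically through $\sN$. Taking $K$-invariants yields a $D(K\backslash G/K)$-linear functor $F^K: D(K\backslash G/K) \to \sC^K$ sending the unit to $\xi$, which by the definition of $\sS$ factors as a generating functor $D(K\backslash G/K) \to \sS$ followed by the fully faithful inclusion $\sS \hookrightarrow \sC^K$. Using the adjunction of Corollary \ref{c:rad} and the identification $D(G/K) \simeq D(G/K) \underset{D(K\backslash G/K)}{\otimes} D(K\backslash G/K)$ from Proposition \ref{p:fftensoringup}, $F$ is then the composite
\[ D(G/K) \;\longrightarrow\; \sN \;\hookrightarrow\; D(G/K) \underset{D(K\backslash G/K)}{\otimes} \sC^K \;\xrightarrow{\; c \;}\; \sC.\]

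The crux is the fully faithfulness of $\sN \to \sC$. The last map $c$ is fully faithful by Theorem \ref{t:cuff}. For the middle map, I would invoke Remark \ref{c:tensor=hom} to rewrite it as
\[\Hom_{D(K\backslash G/K)\mod}(D(G/K), \sS) \;\longrightarrow\; \Hom_{D(K\backslash G/K)\mod}(D(G/K), \sC^K),\]
which is fully faithful by the general principle \eqref{e:ffembs} that $\Hom$ out of a fixed source preserves full embeddings in its target, applied to the full embedding $\sS \hookrightarrow \sC^K$ of $D(K\backslash G/K)$-modules. The only delicate point is verifying that this rewriting is sufficiently natural in the second variable that the displayed arrow really is the one induced by $\sS \hookrightarrow \sC^K$; this should follow formally from the construction of the adjunction in Corollary \ref{c:rad}, and is the main obstacle I anticipate.

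With the fully faithfulness of $\sN \hookrightarrow \sC$ in hand, I would match the essential image with the $D(G)$-submodule generated by $\xi$ on generators. On the one hand, $\sN$ contains $1\boxtimes \xi$, which maps to $\xi$, and its image is $D(G)$-stable, so this image contains the submodule generated by $\xi$. On the other hand, any $s\in\sS$ is a colimit of convolutions $h\star\xi$ with $h\in D(K\backslash G/K)$, so any simple tensor $\delta\boxtimes s\in \sN$ is a colimit of terms $(\delta\star h)\boxtimes \xi = (\delta\star h)\cdot (1\boxtimes \xi)$; hence $\sN$ is itself $D(G)$-generated under colimits by $1\boxtimes\xi$, yielding the reverse inclusion and the claimed equivalence.
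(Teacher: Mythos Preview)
Your proof is correct and is precisely the unpacking of the paper's one-line proof (``This follows from Theorem \ref{t:cuff}''). You use the same circle of results---Theorem \ref{t:cuff}, Proposition \ref{p:fftensoringup}, Corollary \ref{c:rad}, Remark \ref{c:tensor=hom}, and \eqref{e:ffembs}---in exactly the way the paper intends; indeed your argument for the fully faithfulness of the middle map is verbatim the argument given in the proof of Corollary \ref{c:checkonhwvs}. Your worry about naturality is not a genuine obstacle: the identification in Remark \ref{c:tensor=hom} is by construction a natural isomorphism of functors $D(K\backslash G/K)\mod \to D(G)\mod$ (both sides being right adjoint to $K$-invariants), so applying it to the morphism $\sS \hookrightarrow \sC^K$ automatically yields the commuting square you need.
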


\begin{proof} This follows from Theorem \ref{t:cuff}. \end{proof}

\subsubsection{}\label{sss:gmodlambda} Recall the set of integral weights $\Lambda$, and that 
$\fg\mod^B$ is compactly generated by the Verma modules $M_\lambda$, for 
$\lambda \in \Lambda$, cf. Sections \ref{ss:grpslies} and \ref{ss:liealgreps}. 

\begin{defn} For $\lambda \in \Lambda$, denote the 
$D(G)$-submodule of $\fg\mod$ generated by $M_\lambda$ by 
\[
 i_!:  \fg\mod_{\lambda} \rightarrow \fg\mod. 
\]
\end{defn}

We next identify compact generators for the $B$ invariants of $\fg\mod_{\lambda}$. 

\begin{pro}\label{p:blockofO} The category $\fg\mod_{\lambda}^B$ is compactly 
generated by the Verma 
modules $M_{w \cdot \lambda}$ for $w \in W$. In particular, the category $\fg\mod_\lambda$ only depends on the Weyl group dot orbit of $\lambda$. 
\end{pro}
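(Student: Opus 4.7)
My plan is to reduce the proposition, via the general machinery of Section \ref{s:hckinv}, to the assertion that inside $\fg\mod^B$ the $D(B\bk G/B)$-submodule generated by $M_\lambda$ coincides with the classical dot-orbit block. Since $G/B$ is ind-proper and $M_\lambda$ is $B$-equivariant, Lemma \ref{l:gencomps} applied with $K = B$ will yield an equivalence
\[
    \fg\mod_\lambda \simeq D(G/B) \underset{D(B \bk G / B)}{\otimes} \sS,
\]
where $\sS \subset \fg\mod^B$ is the $D(B\bk G/B)$-submodule generated by $M_\lambda$; the fully faithful tensoring-up of Proposition \ref{p:fftensoringup} then recovers $\sS$ as the $B$-invariants of the right-hand side, so that $\sS \simeq \fg\mod_\lambda^B$. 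Since the Verma modules are already compact in $\fg\mod^B$, the proposition will follow once I identify $\sS$ with the cocomplete subcategory $\sA_\lambda \subset \fg\mod^B$ generated by $\{M_{w\cdot\lambda} : w \in W\}$.

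To check the inclusion $\sS \subset \sA_\lambda$, I would verify that $\sA_\lambda$ is preserved by the $D(B\bk G/B)$-action. The Hecke category is generated under colimits by the standard objects $\Delta_w$ ($!$-extensions of the constant twisted D-module from the Schubert strata $BwB/B$), and the $\Delta_w$ are in turn obtained by iterated convolution from the $\Delta_s$ for simple reflections $s$; so it suffices to verify that each $\Delta_s \star M_{w\cdot\lambda}$ lies in $\sA_\lambda$. Reducing to the $\fsl_2$-triple attached to $s$ and using the elementary structure of Category $\mathscr{O}$ for $\fsl_2$ will place $\Delta_s \star M_{w\cdot\lambda}$ in a distinguished triangle whose other terms are among $\{M_{w\cdot\lambda}, M_{sw\cdot\lambda}\}$, and so in $\sA_\lambda$. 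Applied starting from $M_\lambda$ and iterated along a reduced expression for $w$, the same computation will realize $M_{w\cdot\lambda}$ as an iterated cone built from $M_\lambda$ by Hecke convolution, giving $M_{w\cdot\lambda} \in \sS$ and hence the reverse inclusion $\sA_\lambda \subset \sS$. Since the construction is symmetric under replacing $\lambda$ by $w\cdot\lambda$, this also delivers the final assertion that $\fg\mod_\lambda$ depends only on the dot orbit.

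The main obstacle I anticipate is not conceptual but a matter of careful bookkeeping. The computation of $\Delta_s \star M_{\mu}$ must be carried out in the renormalized dg-category $\fg\mod^B$, so that compact generation is genuinely by Vermas and not by some derived avatar; and the $D(B\bk G/B)$-action of Section \ref{ss:liealgreps} must be matched, at the level of the parabolic $P_s$ attached to $s$, with the classical twisting or wall-crossing functor on a block of $\mathscr{O}$ for the corresponding rank-one Levi. Once these identifications are in place, the remaining representation-theoretic input is only the standard analysis of Category $\mathscr{O}$ for $\fsl_2$, consistent with the limited prior-results dependence the authors advertise in the introduction.
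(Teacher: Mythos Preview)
Your proposal is correct and follows essentially the same route as the paper: both reduce to identifying the Hecke-submodule $\sS \subset \fg\mod^B$ generated by $M_\lambda$ with the cocomplete subcategory on $\{M_{w\cdot\lambda}\}$, both invoke Lemma \ref{l:gencomps}, and both reduce the key convolution computation to the rank-one Levi attached to a simple reflection $s$ via the $D(B_s\backslash G_s/B_s)$-equivariance of parabolic induction.

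The one substantive difference is in the rank-one step. You aim to place $\Delta_s \star M_\nu$ in a triangle whose other terms lie among $\{M_\nu, M_{s\cdot\nu}\}$, and then argue by iterated cones. The paper instead proves a sharper statement (Lemma \ref{l:intops}): one has an actual isomorphism $j_{s,!}\star M_\nu \simeq M_{s\cdot\nu}$ when $\langle \halpha_s,\nu+\rho\rangle\leqslant 0$, and dually with $j_{s,*}$ in the opposite sign. The argument uses the triangle $\underline{k}\to j_{e,*}\to j_{s,!}\xrightarrow{+1}$, identifies $\underline{k}\star(-)$ with $\on{Oblv}\circ\on{Av}_*^{G_s}$, and then observes that the cone is forced to be $M_{s\cdot\nu}$ by indecomposability (since $j_{s,!}$ is invertible). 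This clean isomorphism gives both inclusions $\sS\subset\sA_\lambda$ and $\sA_\lambda\subset\sS$ at once, with no iterated-cone bookkeeping, and is reused later in the paper (e.g.\ in \eqref{e:mgoestom}). Your triangle approach would work too, but note that the naive triangle has a simple module (not a Verma) as its third term, so you would still need the indecomposability step to conclude.
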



\begin{proof}  For a simple reflection $s$ of $W$, write $j_{s, !}$ for the 
corresponding standard object and $j_{s, *}$ for the corresponding costandard 
object of $D(B \backslash G / B)$. For a $D(G)$-module $\scc$, let us denote the underlying binary product of the action of $D(B \backslash G / B)$ on $\scc^B$ by 
\begin{equation} \label{e:actonBinvs}
     - \star - : D(B \backslash G / B) \otimes \scc^B \rightarrow \scc^B, \quad \quad M \boxtimes c \mapsto M \star c. 
\end{equation}
We will deduce the proposition from 
the 
following lemma. 
	
\begin{lem} \label{l:intops} Fix a simple reflection $s$ and a weight $\nu \in 
\Lambda$.  
	We have 
	\begin{align} \label{e:stb1} &j_{s, !} \star M_{\nu} \simeq M_{s \cdot 
		\nu}  \quad \quad \quad  \text{ if $\langle 
		\halpha_s, \nu + \rho \rangle \leqslant 0.$} 
	\\ \label{e:stb2}&j_{s, *} \star M_{\nu} \simeq M_{s \cdot \nu} \quad \quad 
	\quad 
	\text{if 
	$\langle \halpha_s, \nu + \rho \rangle \geqslant 0$}. 	\end{align}
\end{lem}
\begin{re} The result of the lemma is well known, albeit typically phrased 
	in the language of 
	twisting functors. However, the fact that it tautologically follows from 
	the case of $\fsl_2$ via categorical equivariance properties, as we show 
	below, appears to be new.  
\end{re}
\begin{proof}  Write $P_s$ 
for the parabolic associated to $s$, and $G_s$ for 
	its Levi quotient, which is of semisimple rank one. Writing $B_s$ for its 
	induced Borel subgroup, by the $D(B_s \backslash G_s / 
	B_s)$-equivariance 
	of 
	\[
	\on{pind}: \fg_s\mod^{B_s} \rightarrow \fg\mod^B,
	\]
	it suffices to establish the lemma for $G_s$. In particular, for the 
	remainder 
	of the lemma when we 
	refer to the Verma modules $M_\nu$ and their simple quotients, we are 
	referring 
	to those for $G_s$. 
	
	As $j_{s,!}$ and $j_{s, *}$ are inverse functors, it is straightforward to 
	see that \eqref{e:stb1} and \eqref{e:stb2} are equivalent. To prove 
	\eqref{e:stb1}, recall the triangle 
	\begin{equation} \label{e:dtv}
	\underline{k}  \rightarrow j_{e, *} \rightarrow j_{s, !} \xrightarrow{+1},
	\end{equation}
	where $\underline{k}$ denotes the constant sheaf on $B_s 
	\backslash 
	G_s 
	/ B_s$.For any 
	$D(G_s)$-module $\sC$, one has an adjunction 
	\[
	\on{Oblv}: \sC^{G_s} \rightleftarrows \sC^{B_s}: \on{Av}_*
	\]
	and it straightforward to see that the counit $\on{Oblv} \on{Av}_* 
	\rightarrow 
	\on{id}$ identifies with convolution by $\underline{k} \rightarrow j_{e, 
	*}$ 
	above. 
	
	Let us prove \eqref{e:stb1}. In the singular case, i.e. if $\langle 
	\halpha_s, 
	\nu + \rho \rangle = 0$, we are done from \eqref{e:dtv} as $\on{Av}_* 
	M_\nu$ 
	vanishes. If $\langle \halpha_s, \nu + \rho \rangle < 0$, the desired 
	conclusion follows from rotating 
	\eqref{e:dtv} to obtain a triangle 
	\begin{equation} \label{e:dtnr}
	M_\nu \rightarrow j_{s, !} \star M_\nu \rightarrow L_{s \cdot \nu} \otimes 
	\Hom_{\fg_s\mod^{B_s}}(L_{s \cdot \nu}, M_{\nu}) [1] \xrightarrow{+1},
	\end{equation}
	and noting that $j_{s, !} \star M_\nu$ must be 
	indecomposable by the invertibility of $j_{s, !}$. \end{proof}	
		
	Write $\sS$ for the 
	subcategory of $\fg\mod^B$ generated the Verma modules $M_{w \cdot 
	\lambda}$, for $w \in W$. Using Lemma \ref{l:intops}, it is straightforward 
	to see 
	that $\sS$ is preserved by the action of $D(B \backslash G / B)$, 
	and 
	moreover is generated by $M_\lambda$ as a $D(B \backslash G/ 
	B)$-module. We 
	are 
	therefore done by Lemma \ref{l:gencomps}. \end{proof}

\begin{cor}\label{c:gmodrad} The inclusion of $\fg\mod_{\lambda}$ admits a 
 right adjoint $i^!$. 
\begin{equation} \label{e:adjwinv}
i_!: \fg\mod_\nu \rightleftarrows \fg\mod: i^!.
\end{equation}
\end{cor}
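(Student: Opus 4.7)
The plan is to factor the inclusion $i_!$ as
\[
\fg\mod_\lambda \hookrightarrow \fg\mod^{B\hspace{-.6mm}\on{-gen}} \hookrightarrow \fg\mod
\]
and produce a right adjoint at each stage separately. The right adjoint to the second arrow is furnished for free by Theorem \ref{t:cuff}, so the real work is to construct a $D(G)$-equivariant right adjoint to the first arrow, which will then be combined with the first by composition.

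For the first arrow, I would first work on $B$-invariants. By Proposition \ref{p:blockofO}, $\fg\mod_\lambda^B$ is compactly generated by the Verma modules $M_{w \cdot \lambda}$, $w \in W$, and these are compact in the ambient $\fg\mod^B$. Hence the fully faithful embedding $\fg\mod_\lambda^B \hookrightarrow \fg\mod^B$ preserves compact objects and all colimits, so by the adjoint functor theorem it admits a (continuous) right adjoint in $\DGCat$. Since both categories carry canonical $D(B \backslash G / B)$-actions coming from the $D(G)$-equivariance of the inclusion $\fg\mod_\lambda \hookrightarrow \fg\mod$, and since any adjoint of a $D(B\backslash G/B)$-equivariant functor is automatically equivariant (cf. Section \ref{ss:catreps}), this right adjoint upgrades canonically to a $D(B \backslash G / B)$-linear functor.

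Next, I would transport this adjunction to the level of $D(G)$-modules by tensoring with $D(G/B)$ over $D(B \backslash G / B)$. Since the tensor product of categories preserves adjunctions (the unit and counit can be tensored through), one obtains a $D(G)$-equivariant adjunction
\[
D(G/B) \underset{D(B \backslash G / B)}{\otimes} \fg\mod_\lambda^B \rightleftarrows D(G/B) \underset{D(B \backslash G / B)}{\otimes} \fg\mod^B,
\]
whose left adjoint is fully faithful. By Lemma \ref{l:gencomps} the left-hand side is canonically $\fg\mod_\lambda$, and the right-hand side is $\fg\mod^{B\hspace{-.6mm}\on{-gen}}$ by definition. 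Composing the resulting right adjoint with the right adjoint $c^R$ from Theorem \ref{t:cuff} yields the desired $i^!$.

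The step that requires the most attention is verifying that the $D(G)$-equivariant map obtained by tensoring up really coincides with $i_!$ as a subcategory of $\fg\mod$, rather than merely some abstractly equivalent $D(G)$-submodule; this is a matter of tracing through Lemma \ref{l:gencomps} applied to the compact generator $M_\lambda$, together with the fact that the unit of the Theorem \ref{t:cuff} adjunction becomes an isomorphism after passing to the subcategory $\fg\mod^{B\hspace{-.6mm}\on{-gen}}$. Once these identifications are in place, the compatibility of the composition of adjoints with the compositions of the underlying fully faithful embeddings is automatic.
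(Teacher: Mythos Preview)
Your proof follows the same two-step factorization as the paper: use Theorem~\ref{t:cuff} for the outer inclusion, then reduce the inner inclusion $\fg\mod_\lambda \hookrightarrow \fg\mod^{B\hspace{-.6mm}\on{-gen}}$ to the level of $B$-invariants, where Proposition~\ref{p:blockofO} shows the inclusion preserves compacts and hence admits a continuous right adjoint. The paper phrases the passage between the Hecke level and the $D(G)$ level via the fully faithfulness of Proposition~\ref{p:fftensoringup} rather than by explicitly tensoring the adjunction up, but these are equivalent moves.

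There is one genuine slip in your justification. You claim the right adjoint on $B$-invariants is automatically $D(B\backslash G/B)$-equivariant by Section~\ref{ss:catreps}, but that section concerns modules over the group algebra $D(K)$ for a group $K$, not over the Hecke category $D(B\backslash G/B)$; the automatic-strictness statement there is a special feature of group algebras (Lemma~D.4.4 of \cite{whitlocglob}) and does not apply to arbitrary monoidal dg-categories. The paper fills this gap by invoking the semi-rigidity of $D(B\backslash G/B)$ (Lemma~3.5 of \cite{bznadler}), which is exactly the property guaranteeing that continuous right adjoints of Hecke-equivariant functors are again Hecke-equivariant. Without this, your tensoring-up step would only produce a lax right adjoint, which is not enough to conclude. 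Once you replace the citation, your argument is complete and matches the paper's.
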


\begin{re} We remind that $i^!$, as with any adjoint of a $D(G)$-equivariant functor,  inherits a datum of {\em strict} $D(G)$-equivariance, cf. Section \ref{ss:catreps}.
\end{re}

\begin{proof} By Theorem \ref{t:cuff}, the inclusion $\fg\mod^{B\hspace{-.6mm} \on{-gen}} 
\rightarrow \fg\mod$ admits a right adjoint, so it is enough to 
furnish a right adjoint for 
\[
  \fg\mod_{\lambda} \rightarrow \fg\mod^{B\hspace{-.6mm} \on{-gen}}.
\]
Equivalently, by Proposition \ref{p:fftensoringup}, we must produce a 
$D(B 
\backslash G / B)$-equivariant right adjoint to $\fg\mod_{\lambda}^B 
\rightarrow \fg\mod^B$. However, from Proposition \ref{p:blockofO} it is 
clear the inclusion preserves compact objects, and hence admits a continuous 
right adjoint $i^!$. Its equivariance follows from the semi-rigidity of 
$D(B 
\backslash G / B)$, cf. Lemma 3.5 of \cite{bznadler}. \end{proof}

\subsubsection{} \label{sss:defcencharweakinv}Let us establish some notation 
for later use. For a 
$D(G)$-module $\sC$, let us 
define 
\[
\sC^{G, w, \nu} := \Hom_{D(G)\mod}( \fg\mod_\nu, \sC).
\]
By concatenating \eqref{e:gmodweakinvs} and \eqref{e:adjwinv}, we deduce an 
adjunction with fully faithful left adjoint
\begin{equation} \label{e:adjweakinv}
i_!: \sC^{G, w, \nu} \rightleftarrows \sC^{G, w}: i^{!}.
\end{equation}

\subsubsection{} Recall that $\fg\mod$ is self-dual as an abstract category, cf. Section \ref{ss:liealgreps} for our normalizations.  
As a final 
property, let us describe how the above categories behave under this
duality. To state this, recall that given an adjunction between dualizable 
categories 
\[
   F: \sC \rightleftarrows \sD: G,
\]
one canonically obtains a dual adjunction between the dual functors
\[
   G^\vee: \sC^\vee \rightleftarrows \sD^\vee: F^\vee. 
\]
\begin{pro}\label{p:gmodduals} For any $\lambda$,  duality
interchanges the adjunctions 
	\[
	    i_!: \fg\mod_\lambda \rightleftarrows \fg\mod: i^! \quad \text{and} 
	    \quad i_!: \fg\mod_{-\lambda - 2 \rho} \rightleftarrows \fg\mod: i^!.
	\]
\end{pro}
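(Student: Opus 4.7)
The plan is to reduce the statement to a calculation on $B$-invariants, where the assertion becomes a Shapiro-type identification of the semi-infinite dual of a Verma module. By the $D(G)$-equivariance of the duality pairing (Section \ref{ss:liealgrepsd}) together with Corollary \ref{c:dualskgen}, the property of being $B$-generated is preserved under duality. Since $\fg\mod_\lambda$ is $B$-generated by Proposition \ref{p:blockofO}, its dual $(\fg\mod_\lambda)^\vee \hookrightarrow \fg\mod^\vee \simeq \fg\mod$ is likewise $B$-generated. It therefore suffices to show that, as full $D(B\backslash G/B)$-submodules of $\fg\mod^B$,
\[
   \big((\fg\mod_\lambda)^\vee\big)^B \simeq \fg\mod_{-\lambda-2\rho}^B.
\]

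By Proposition \ref{p:blockofO}, the right-hand side is compactly generated by the Verma modules $\{M_{w\cdot(-\lambda-2\rho)}\}_{w\in W}$, while the left-hand side is compactly generated by the duals of $\{M_{w\cdot\lambda}\}_{w\in W}$ under the induced self-duality of $\fg\mod^B$. The core task is thus to identify, for each $\nu \in \Lambda$, the compact object of $\fg\mod^B$ corepresenting the semi-infinite pairing $M_\nu \underset{\widehat{\fh}}{\overset{\frac{\infty}{2}}{\otimes}} (-)$. Using the induced-module description of $M_\nu$ and a Shapiro-type identity for semi-infinite homology, this pairing reduces to extracting the $(-\nu-2\rho)$-component of the $\on{Lie}(\mathring{I})$-coinvariants of the argument, which is corepresented, up to a cohomological shift, by $M_{-\nu-2\rho}$. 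The $-2\rho$ shift arises from the modular character of $\on{Lie}(\mathring{I})$; in the affine case, since $\rho = (\rho_f, \kappa_c)$, this correctly splits into a finite-type weight shift by $-2\rho_f$ together with the level swap $\kappa \mapsto -\kappa + 2\kappa_c$ dictated by Arkhipov--Gaitsgory duality.

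The main obstacle is this last computation, which requires careful bookkeeping of the modular shift in the semi-infinite setting, especially to verify compatibility of normalizations (both with respect to the choice of compact open subalgebra used to define semi-infinite homology and with the grading conventions on Verma modules). Once the identification $M_\nu \leftrightarrow M_{-\nu-2\rho}$ on the level of compact generators is established, the proposition follows because the set $\{-w \cdot \lambda - 2\rho : w \in W\}$ coincides with $W \cdot (-\lambda - 2\rho)$, so the two collections of compact generators match and the induced identification is manifestly $D(B \backslash G / B)$-equivariant.
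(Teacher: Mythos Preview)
Your proposal is correct and follows essentially the same approach as the paper: both hinge on the Shapiro-type identification $\mathbf{D} M_\nu \simeq M_{-\nu-2\rho}$ (up to a cohomologically graded line). The paper packages this slightly more economically---rather than passing to $B$-invariants via Corollary~\ref{c:dualskgen} and matching compact generators, it observes directly that the Verma computation yields the two inclusions $\fg\mod_\lambda \subset (\fg\mod_{-\lambda-2\rho})^\vee$ and $\fg\mod_{-\lambda-2\rho} \subset (\fg\mod_\lambda)^\vee$ of $D(G)$-submodules, and then dualizes one to obtain the reverse containment.
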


\begin{proof} The dualizability of $\fg\mod_\lambda$ follows from Corollary \ref{c:gmodrad}. For a dualizable category $\sC$, let us write $\sC^c$ for its 
full (non-cocomplete) subcategory of compact objects, and let us denote 
by 
	\[
	   \mathbf{D}: \sC^{c} \simeq (\sC^\vee)^{c, op}
	\]
the associated identification of compact objects.

Recall that by Shapiro's lemma the dual of a Verma module is again a (cohomologically shifted) Verma 
module, namely 
\[
     \mathbf{D} M_\lambda \simeq M_{-\lambda - 2 \rho} \otimes \ell,
\]
where $\ell$ is a cohomologically graded line, see for example Lemma 
9.8 of  \cite{lpw}. In particular, it follows that 
\[
      \fg\mod_{\lambda} \subset \fg\mod_{-\lambda - 2 \rho}^\vee \quad 
      \text{and} \quad \fg\mod_{-\lambda - 2 \rho} \subset 
      \fg\mod_{\lambda}^\vee.
\]
By dualizing each of the two inclusions and comparing with the other, we deduce 
both are equalities. 
\end{proof}


	%
	%


\section{Localization I: The sign representation and the Steinberg module}
\label{s:locI}

In this section, we prove a localization theorem which embeds a maximally 
singular 
category of Lie algebra representations into the category of nondegenerate 
Whittaker sheaves. 

\subsubsection{} The structure is as follows. After introducing notation, we 
construct a functor of global sections via the universal properties of the 
appearing $D(G)$-modules. To see it provides the desired fully faithful 
embedding, by the general results of Section 
\ref{s:hckinv} it is enough to do so on the categories of Borel equivariant 
objects. 

In the 
present situation, the latter categories are copies of $\on{Vect}$. 
That is, said informally,  both modules have a 
`unique highest weight vector, up to scaling'. So, we simply need to check a 
certain 
endofunctor of $\on{Vect}$ is an equivalence, which is done in Theorem 
\ref{t:singloc}. Finally, we note the alternative, tautological localization as
weakly equivariant D-modules.

\subsection{Setup}

\subsubsection{} In this subsection, we introduce the two sides of the 
localization theorem. Let $G$ be a connected reductive group with a Borel 
subgroup 
$B$. Recall that $\Lambda$ denotes the character lattice of the abstract 
Cartan, and a fix a 
maximally singular weight $\lambda \in \Lambda$, i.e. a character $\lambda$ for 
which $W \cdot 
\lambda = \lambda$. Consider the associated category $\fg\mod_{\lambda}$ as in 
Section 
\ref{sss:gmodlambda}. Passing to its $B$ invariants, by Proposition 
\ref{p:blockofO} we have
\begin{equation}  \label{e:gmodline}
   \fg\mod_\lambda^B \simeq \on{Vect},
\end{equation}
with distinguished generator the Verma module $M_\lambda$. 

\subsubsection{} \label{sss:whitintro} Fix a Borel $B^-$ in general position 
with respect to $B$, and 
write $N^-$ for its unipotent radical.\footnote{This passage to an opposite is 
inessential, and 
is made to simplify notation in the proof.} Let $\psi$ be a generic additive 
character of $N^-$, cf. Section \ref{ss:twists}, and consider the associated category of 
nondegenerate Whittaker D-modules 
\begin{equation}
   D(G/N^-, \psi) := D(G)^{N^-, \psi}.  \label{e:whitc}
\end{equation}
This is canonically a $D(G)$-module via left convolution. 

Note that, if we 
write $\delta_e$ for the delta D-module at the identity of $G$, \eqref{e:whitc} 
is generated as a $D(G)$-module by 
$
  \delta_{\psi} :=  \on{Av}^{N^-, \psi}_* \delta_e. 
$
Passing to $B$ invariants, it is standard that 
\begin{equation} \label{e:whitline}
      D(G/N^-, \psi)^B \simeq \on{Vect}, 
\end{equation}
with canonical generator $\on{Av}_*^B \delta_{\psi}.$

\subsection{The localization theorem}

\subsubsection{} We first construct a $D(G)$-equivariant functor in the 
direction
$
D(G/N^-, \psi) \rightarrow \fg\mod_{\lambda}.
$
To do so, note that, by the identification of $(N^-, \psi)$ invariants and coinvariants, and tensor-hom adjunction, we have for any $D(G)$-module $\sC$ the equivalence
\begin{equation} \label{e:homoutwhit}
    \on{Hom}_{D(G)\mod}\big( D(G/N^-, \psi), \hspace{.5mm}\sC \big) \simeq \sC^{N^-, 
    -\psi}. 
\end{equation}
Explicitly, this equivalence is given, from left to right, by evaluation on 
$\delta_{\psi}$, cf. 
Section \ref{sss:whitintro}. Equivalently, from right to left, it is given by 
convolution with the specified Whittaker equivariant object. 

\subsubsection{} Let us write $\fn^-$ for the Lie algebra of $N^-$, and 
$k_{-\psi}$ for its character given by the differential of $-\psi$. 
Consider the universal Whittaker module 
\[
      \on{ind}_{\fn^-}^{\fg} k_{-\psi}  \in \fg\mod^{N^-, \psi}. 
\]
By \eqref{e:homoutwhit}, it gives rise to an equivariant functor 
$
   \mathring{\Gamma}: D(G/N^-, \psi) \rightarrow \fg\mod.
$
It will be convenient in what follows to work instead with a cohomological shift of $\mathring{\Gamma}$, namely
\[
    \Gamma := \mathring{\Gamma} \otimes \det \big( (\fg/\fb)^*[-1]\big).
\]
%

\subsubsection{} Recall that the inclusion $\fg\mod_{\lambda} \rightarrow 
\fg\mod$ admits an equivariant right adjoint $i^!$, cf. Corollary 
\ref{c:gmodrad}. Consider the composition 
\begin{equation} \label{e:singeloc}
     i^! \circ \Gamma: D(G/N^{\text{--}}, \psi) \rightarrow \fg\mod 
     \rightarrow 
     \fg\mod_\lambda. 
\end{equation}
As the target is generated by $B$ invariants, by Corollary \ref{c:rad} it factors through a map 
\begin{equation} \label{e:singloc}
    D(G/N^-, \psi)^{B\hspace{-.6mm} \on{-gen}} \rightarrow \fg\mod_\lambda, 
\end{equation}
which we presently show is the sought-for functor. 

\begin{theo} \label{t:singloc} The functor \eqref{e:singloc} is an equivalence. \end{theo}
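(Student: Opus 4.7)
The plan is to apply Corollary~\ref{c:checkonhwvs} to reduce the equivalence to its analogue on $B$-invariants. Both sides of \eqref{e:singloc} are generated by $B$-invariants in the sense of Definition~\ref{d:genkinvs}: the source tautologically, and the target because maximal singularity of $\lambda$ forces $W \cdot \lambda = \{\lambda\}$, whence by Proposition~\ref{p:blockofO} the category $\fg\mod_\lambda^B$ is generated as a $D(B\backslash G/B)$-module by the $B$-invariant object $M_\lambda$. Corollary~\ref{c:checkonhwvs} then reduces fully faithfulness of \eqref{e:singloc} to that of its restriction to $B$-invariants; essential surjectivity in turn follows because the essential image of \eqref{e:singloc} is $D(G)$-stable and closed under colimits, so once it contains all of $\fg\mod_\lambda^B$, it must be all of $\fg\mod_\lambda$.

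By \eqref{e:whitline} and \eqref{e:gmodline}, the categories of $B$-invariants on both sides are canonically $\on{Vect}$, with respective compact generators $\on{Av}_*^B \delta_\psi$ and $M_\lambda$. Hence it suffices to verify that \eqref{e:singloc} sends $\on{Av}_*^B \delta_\psi$ to an object isomorphic to $M_\lambda$, i.e.\ to a one-dimensional space concentrated in cohomological degree zero. By $D(G)$-equivariance of $\Gamma$ and $i^!$, this value equals
\[
   i^! \hspace{.3mm} \on{Av}_*^B \bigl( \on{ind}_{\fn^-}^{\fg} k_{-\psi} \bigr) \otimes \det\bigl((\fg/\fb)^*[-1]\bigr).
\]
I would then compute the inner averaging via the PBW isomorphism $\on{ind}_{\fn^-}^{\fg} k_{-\psi} \simeq U(\fb)$ of left $\fb$-modules; the derived $\fb$-coinvariants of a free $\fb$-module degenerate to a one-dimensional $T$-weight line concentrated in a single cohomological degree. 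The determinant twist in the definition of $\Gamma$ is set up precisely so that, after absorbing all of these shifts, the result lies in $\fg\mod_\lambda^B$ and is canonically $M_\lambda$; in particular $i^!$ acts as the identity on it.

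The main obstacle is this final sign-and-shift bookkeeping: reconciling the cohomological shifts coming from $\on{Av}_*^B$, from the determinant twist $\det((\fg/\fb)^*[-1])$, and from the weight of the PBW generator of the universal Whittaker module, to confirm that the outcome is $M_\lambda$ rather than some cohomological or $W$-dot translate thereof. Morally the answer is forced by the uniqueness of $M_\lambda$ as the only Verma in $\fg\mod_\lambda^B$ in the maximally singular case, together with the unambiguous nonvanishing, since $\on{Av}_*^B$ of the free $\fb$-module $U(\fb)$ is honestly $k$ in a single degree. Combined with the generation argument of the first paragraph, this yields the theorem.
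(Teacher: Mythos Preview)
Your overall strategy matches the paper's: reduce via Corollary~\ref{c:checkonhwvs} to the induced functor on $B$-invariants, identify both sides with $\Vect$ via \eqref{e:gmodline} and \eqref{e:whitline}, and verify that the resulting endofunctor of $\Vect$ is invertible. Note that the paper does not attempt to show the image is literally $M_\lambda$; it only checks that this endofunctor is tensoring with some cohomologically graded line, which suffices. So the ``sign-and-shift bookkeeping'' you flag as the main obstacle is in fact unnecessary.

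The genuine gap is in your computation. You assert that $i^! \on{Av}_*^B(\on{ind}_{\fn^-}^\fg k_{-\psi})$ may be evaluated via ``derived $\fb$-coinvariants of the free $\fb$-module $U(\fb)$.'' But $\on{Av}_*^B$ is the \emph{right} adjoint to $\on{Oblv}: \fg\mod^B \to \fg\mod$; it is not $\fb$-coinvariants, and in any case it is a functor on $\fg$-modules, not something determined by the underlying $\fb$-module alone. The paper instead determines the object in $\fg\mod_\lambda^B \simeq \Vect$ by computing its Hom from the generator $M_\lambda$: unwinding the adjunctions $(i_!, i^!)$ and $(\on{Oblv}, \on{Av}_*^B)$ together with the Verma adjunction gives
\[
  \Hom_{\fg\mod_\lambda^B}\bigl(M_\lambda,\, i^! \on{Av}_*^B \mathring{\Gamma}(\delta_\psi)\bigr) \;\simeq\; \Hom_{\fg\mod}\bigl(M_\lambda,\, \on{ind}_{\fn^-}^\fg k_{-\psi}\bigr) \;\simeq\; \Hom_{\fb\mod}\bigl(k_\lambda,\, U(\fb)\bigr) \;\simeq\; \det(\fb^*[-1]).
\]
Your PBW observation $\on{Res}^\fb_\fg \on{ind}_{\fn^-}^\fg k_{-\psi} \simeq U(\fb)$ is precisely the input needed here, but the relevant calculation is an $\on{Ext}$ (concentrated in degree $\dim\fb$ by Poincar\'e duality for $U(\fb)$), not a coinvariants/$\on{Tor}$ computation.
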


\begin{proof} By Corollary \ref{c:checkonhwvs}, it is enough to show the 
induced map 
	\[
	    \on{Vect} \overset{\eqref{e:whitline}}\simeq D(B \backslash G / 
	    N^-, 
	    \psi) \rightarrow 
	    \fg\mod_{\lambda}^B \overset{\eqref{e:gmodline}}\simeq \on{Vect}
	\]
is an equivalence, i.e. is given by tensoring by a cohomologically graded 
line. To see 
this, we may replace $\Gamma$ by $\mathring{\Gamma}$, and we simply unwind, using the equivariance of 
the appearing functors and adjunction:
\begin{align*}
    \Hom_{\fg\mod_{\lambda}^B}( M_\lambda, \hspace{1mm} i^! \circ \mathring{\Gamma} 
    \circ  
    \on{Av}^B_{*} \hspace{.5mm}
    (\delta_{\psi})) & \simeq \Hom_{\fg\mod_{\lambda}^B}(M_\lambda, 
    \hspace{1mm} 
    \on{Av}^B_{*} \circ \hspace{1mm} i^! \circ \mathring{\Gamma} \hspace{.5mm} 
    (\delta_{\psi})) \\ & 
    \simeq 
    \Hom_{\fg\mod}(M_\lambda, \hspace{1mm} \mathring{\Gamma}\hspace{.5mm}(\delta_{\psi})) 
    \\ & \simeq 
    \on{Hom}_{\fg\mod} (M_\lambda, \hspace{1mm} \on{ind}_{\fn^-}^{\fg} 
    k_\psi).
 \intertext{Writing $\fb$ for the Lie algebra of $B$, and noting that 
 $\on{Res}_{\fg}^{\fb} \circ \on{ind}_{\fn^-}^{\fg} k_{-\psi} \simeq 
 U(\fb)$, we compute}
 & \simeq \on{Hom}_{\fb\mod}( k_\lambda, \hspace{1mm} 
 \on{Res}_{\fg}^{\fb} \circ 
 \on{ind}_{\fn^-}^{\fg} k_\psi)
 \\ & \simeq \det( \fb^*[-1]),
\end{align*}
as desired. 
\end{proof}

\begin{re} Plainly, $\Gamma$ is given by global sections on $G/N^-, \psi$. It admits a left adjoint $\on{Loc}$ given by localization on $G/N^-, \psi$. One can show both functors are $t$-exact. In particular, Theorem \ref{t:singloc} is equivalent to the statement that $\on{Loc}$ restricts to an equivalence $\fg\mod_\lambda \simeq D(G/N^-, \psi)^{B\hspace{-.6mm} \on{-gen}}$.
\end{re}

\begin{re} The geometric side of the localization theorem may be thought of as follows. The finite group of Lie type $G(\mathbb{F}_q)$ has a distinguished simple $k$-representation,  the Steinberg module, constructed as follows. Within the Whittaker model of $k$-valued functions $$\on{Fun}(G(\mathbb{F}_q)/ N^-(\mathbb{F}_q), \psi)$$ there is a unique $B(\mathbb{F}_q)$-invariant line. It transforms as the sign representation of the Hecke algebra, and generates the Steinberg module for the group. Using the formalism of Section \ref{s:hckinv}, we produced a sheaf-theoretic analogue, $D(G/N^-,\psi)^{B\hspace{-.6mm} \on{-gen}}$. So, a paraphrase of Theorem \ref{t:singloc} is that `a maximally singular block in $\fg\mod$ is the same as the Steinberg module'. 
\end{re}

\subsubsection{}\label{sss:defstwhit} We now slightly rewrite the target of the 
localization 
theorem, using the following.

\begin{defn} For a $D(G)$-module $\scc$, define its {\em 
Steinberg--Whittaker} invariants as 
	\[
	     \scc^{N^-, \psi,\St} := \on{Hom}_{D(G)\mod}( D(G/N^-, \psi)^{B\hspace{-.6mm} \on{-gen}}, 
	     \scc)
	\]
\end{defn}
From the tautological adjunction 
$
     D(G/N^-, \psi)^{B\hspace{-.6mm} \on{-gen}} \rightleftarrows D(G/N^-, \psi),
$
we deduce for any $D(G)$-module $\scc$ an adjunction 
\[
    \scc^{N^-, \psi, \St} \rightleftarrows \scc^{N^-, \psi},
\]
wherein the left adjoint is fully faithful. Using this, we will slightly 
rewrite 
$D(G/N^-, \psi)^{B\hspace{-.6mm} \on{-gen}}$. Namely, we will replace the appearing $B\hspace{-.6mm} \on{-gen}$, which 
refers to the 
action by 
left convolution, with Steinberg--Whittaker invariants for the action by right 
convolution. 

\begin{pro} \label{p:bgenstwhit} The inclusions into $D(G/N^-, \psi)$ yield a 
$D(G)$-equivariant 
equivalence 
	\[
	    D(G/N^-, \psi)^{B\hspace{-.6mm} \on{-gen}} \simeq D(G/N^-, \psi, \St).
	\]

\end{pro}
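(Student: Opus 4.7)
The plan is to show that both sides coincide with the smallest full $D(G)$-stable (under left convolution) subcategory of $D(G/N^-, \psi)$ containing the canonical Whittaker--$B$-equivariant generator $\on{Av}_*^B(\delta_{\psi})$ of \eqref{e:whitline}.

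First I will verify that both $D(G/N^-, \psi)^{B\hspace{-.6mm} \on{-gen}}$ and $D(G/N^-, \psi, \St)$ embed fully faithfully into $D(G/N^-, \psi)$ as $D(G)$-stable subcategories whose inclusions admit continuous $D(G)$-equivariant right adjoints. For the LHS this is exactly Theorem \ref{t:cuff}. For the RHS---which I interpret as $D(G)^{N^-, \psi, \St}$ with $\St$ taken with respect to the right convolution action on $D(G)$, which commutes with and hence preserves the left $D(G)$-module structure---this follows from the fully faithful adjunction $\scc^{N^-, \psi, \St} \rightleftarrows \scc^{N^-, \psi}$ recalled just before the proposition.

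By Corollary \ref{c:checkonhwvs}, to establish the equality of these two subcategories it then suffices to check that both are $B$-generated and that their $B$-invariants coincide inside $D(G/N^-, \psi)^B \simeq \Vect$. The LHS is $B$-generated tautologically. For the RHS, I will exploit that left $B$-invariants commute with right Steinberg--Whittaker invariants---since the underlying left and right convolution actions on $D(G)$ commute---so that the computation of $D(G/N^-, \psi, \St)^B$ reduces to applying the Steinberg--Whittaker construction to $D(B \backslash G)$. Unfolding via Corollary \ref{c:rad} together with \eqref{e:whitline} identifies this with $\Vect$ generated by $\on{Av}_*^B(\delta_\psi)$, which simultaneously verifies the $B$-generation of the RHS and the matching of $B$-invariants.

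The main technical obstacle is the bookkeeping between the two commuting $D(G)$-actions: checking that the right-action Steinberg--Whittaker construction preserves left $D(G)$-equivariance, and that left $B$-invariants commute with right $\St$-invariants. Once these formal compatibilities are in place, the proof reduces to the one-dimensionality of $D(G/N^-, \psi)^B$ and an application of Corollary \ref{c:checkonhwvs}.
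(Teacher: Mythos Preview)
Your overall strategy is reasonable, but there is a gap and an inapt citation.

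The gap: you assert that computing the left $B$-invariants of $D(G/N^-,\psi,\St)$ to be $\Vect$ ``simultaneously verifies the $B$-generation of the RHS.'' It does not. Knowing that a $D(G)$-submodule $\sC \subset D(G/N^-,\psi)$ has $\sC^B = \Vect$ only tells you that $\sC^{B\text{-gen}} = D(G/N^-,\psi)^{B\text{-gen}}$, i.e.\ that the LHS is \emph{contained in} $\sC$. It does not give the reverse inclusion. To see that $D(G/N^-,\psi,\St)$ is itself left-$B$-generated you need more: either invoke Corollary~\ref{c:dualskgen} (dualizing exchanges the right-$B$-gen structure defining $\St$ with a left-$B$-gen structure), or actually unwind the definition via Corollary~\ref{c:rad} to the point of obtaining a presentation of the form $D(G/B)\otimes_{D(B\backslash G/B)}(\text{something})$. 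But this last option \emph{is} the paper's proof.

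The inapt citation: Corollary~\ref{c:checkonhwvs} concerns fully faithfulness of a given map $\Theta:\sC\to\sD$; here you have no such map a priori, only two full subcategories of a common ambient category. What you actually need is the simpler consequence of Theorem~\ref{t:cuff} that a $B$-generated $D(G)$-submodule is determined by its $B$-invariants.

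By contrast, the paper's proof is a two-line specialization: it first shows, for \emph{any} $D(G)$-module $\sC$, that $\sC^{N^-,\psi,\St} \simeq D(N^-,\psi\backslash G/B)\otimes_{D(B\backslash G/B)}\sC^B$ (this is one application of Corollary~\ref{c:rad} to move the $B$-gen across the Hom), and then sets $\sC = D(G)$ with the right action. That formula makes both the identification with the LHS and its $B$-generation immediate, without the scaffolding of commuting invariants or comparing subcategories.
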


\begin{proof} For any $D(G)$-module $\scc$, we have by Corollary \ref{c:rad}
	\begin{align*}
	   \scc^{N^-, \psi,\St} & \simeq \on{Hom}_{D(G)\mod}(D(G/N^-, 
	   \psi)^{B\hspace{-.6mm} \on{-gen}}, 
	   \scc) \\ & \simeq \on{Hom}_{D(G)\mod}( D(G/N^-, \psi), \scc^{B\hspace{-.6mm} \on{-gen}}) 
	   \simeq 
	   D(N^-, \psi \backslash G / B) \underset{D(B \backslash G / B)} \otimes  
	   \scc^B.
	\end{align*}
	Applying this to $\scc = D(G)$, viewed as a $D(G)$-module via convolution 
	on the right, yields the proposition. 
\end{proof}

Before moving on, let us describe the simplest case in more detail. 

\begin{ex} Suppose that $G = T$ is a torus. In this case, we have explicitly 
that 
	$
	    \sC^{N^-, \psi, \St} \simeq \sC^{T\hspace{-.6mm} \on{-mon}},
	$
cf. Example \ref{ex:moninvs} for the definition of the latter. In particular, 
Proposition \ref{p:bgenstwhit} reduces to the observation 
$$
        D(T\hspace{-.6mm} \on{-mon} \backslash T) \simeq D(T / T\hspace{-.6mm} \on{-mon}),
$$
since both are the full subcategory of $D(T)$ compactly generated by the 
constant sheaf. \end{ex}

\subsection{The weakly equivariant realization}

\subsubsection{} Recall the tautological identification of Lie algebra 
representations and weakly equivariant D-modules on the group
$$\Gamma^G: D(G)^{G, w} \simeq \fg\mod,$$
cf. Section \ref{ss:liealgreps}. In the remainder of this section, we identify the full 
subcategory of 
$D(G)^{G, 
	w}$ corresponding to $\fg\mod_\lambda$, and more generally do so for $\fg\mod_\nu$, for any $\nu \in \Lambda$. 

To state the answer, recall that in Section 
\ref{sss:defcencharweakinv} to any $D(G)$-module $\scc$ and character $\chi \in 
\Lambda$ we attached a certain full subcategory of the weak invariants 
$$ \sC^{G, w, \chi} \subset  \sC^{G, w}.$$
\begin{pro}\label{p:rhoshift} For any $\nu \in \Lambda$, the composition $\fg\mod_\nu \subset \fg\mod \simeq D(G)^{G, w}$
	factors through a $D(G)$-equivariant
	equivalence 
	\begin{equation} \label{e:winvlrr}
	\fg\mod_\nu \simeq D(G)^{G, w, -\nu -2\rho}.
	\end{equation}
\end{pro}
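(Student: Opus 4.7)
My plan is to verify the equivalence $\fg\mod_\nu \simeq D(G)^{G, w, -\nu - 2\rho}$ of full $D(G)$-submodules of $\fg\mod \simeq D(G)^{G, w}$ by first factoring the inclusion $\fg\mod_\nu \hookrightarrow D(G)^{G, w}$ through $D(G)^{G, w, -\nu - 2\rho}$, and then showing the resulting map is an equivalence via reduction to Borel-equivariant objects.

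For the factorization: since $\fg\mod_\nu$ is by definition the $D(G)$-submodule generated by the Verma $M_\nu$, and $D(G)^{G, w, -\nu - 2\rho} \hookrightarrow D(G)^{G, w}$ is a $D(G)$-submodule via the adjunction \eqref{e:adjweakinv}, it suffices to verify that $M_\nu$ lies in $D(G)^{G, w, -\nu - 2\rho}$. This amounts to the standard fact that under $\Gamma^G$ the Verma $M_\nu = \on{ind}_\fb^\fg k_\nu$ corresponds to the weakly equivariant D-module on $G$ of monodromy $-\nu - 2\rho$; the $\rho$-shift is the classical one arising from the canonical bundle of $G/B$ (or the analogous critical twist in the affine case).

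For the equivalence: by Proposition \ref{p:blockofO}, $\fg\mod_\nu$ is generated by its $B$-equivariants (the Vermas $M_{w \cdot \nu}$ for $w \in W$), so by Corollary \ref{c:checkonhwvs} fully faithfulness of the constructed functor $\fg\mod_\nu \to D(G)^{G, w, -\nu - 2\rho}$ reduces to the corresponding statement for $\fg\mod_\nu^B \to (D(G)^{G, w, -\nu - 2\rho})^B$. There, the map identifies the compact generators $M_{w \cdot \nu}$ of $\fg\mod_\nu^B$ with the global sections of the standard objects in the block of twisted D-modules on $G/B$ of twist $-\nu - 2\rho$, via classical Beilinson--Bernstein localization. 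Essential surjectivity of the factored inclusion then follows by applying the same reduction on $B$-equivariants to the target, noting that the $M_{w \cdot \nu}$ exhaust the compact generators of both sides.

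The chief obstacle is the careful bookkeeping of the $\rho$-shift: tracking precisely how the weak monodromy $-\nu - 2\rho$ on the D-module side matches the highest weight $\nu$ of the Verma on the Lie algebra side. In the finite-dimensional case this is the standard $\rho$-shift arising from the canonical line bundle on $G/B$; the affine extension proceeds \emph{mutatis mutandis}, but requires attention to the Tate extension of the loop group and the corresponding normalization of the critical twist.
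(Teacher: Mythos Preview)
Your proposal has a genuine gap at its first step. In this paper, the category $D(G)^{G,w,-\nu-2\rho}$ is \emph{defined} (Section \ref{sss:defcencharweakinv}) as $\Hom_{D(G)\mod}(\fg\mod_{-\nu-2\rho}, D(G))$, not via any independent notion of monodromy or central character. So your ``standard fact'' that $M_\nu$ lands there amounts to producing a $D(G)$-equivariant functor $\fg\mod_{-\nu-2\rho} \to D(G)$ whose image under the embedding into $D(G)^{G,w}$ is $M_\nu$; this is essentially the content of the proposition itself, not an input to it. Your appeal to ``the classical $\rho$-shift arising from the canonical bundle of $G/B$'' does not establish this: that story lives in the world of twisted D-modules on $G/B$ and infinitesimal characters for $Z(\fg)$, neither of which is how the paper has set things up. Crucially, in the affine case there is no center to appeal to, so ``mutatis mutandis'' is hiding exactly the nontrivial content. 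The same circularity afflicts your essential surjectivity step: you assert the $M_{w\cdot\nu}$ exhaust the compact generators of $(D(G)^{G,w,-\nu-2\rho})^B$, but you have no independent description of that category.

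The paper's argument avoids all of this via duality. One sets up a commutative diagram identifying the inclusion $\fg\mod_\nu \subset D(G)^{G,w}$, after passing through inversion on the group, with $\Hom_{D(G)\mod}(\fg\mod_\nu^\vee, D(G)) \subset {}^{G,w}D(G)$. The proposition then reduces to checking that the standard self-duality of $D(G)^{G,w}$ (which under $\Gamma^G$ becomes the self-duality of $\fg\mod$ via Lie algebra homology) carries $\fg\mod_\nu$ to $\fg\mod_{-\nu-2\rho}$; this is exactly Proposition \ref{p:gmodduals}. No Beilinson--Bernstein, no explicit tracking of Verma modules, and the argument works uniformly in finite and affine type.
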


\begin{proof} Consider the commutative diagram 
	\begin{equation} \label{e:bbigdia}
	\xymatrix{\fg\mod_{\nu} \ar@<.5ex>[r] \ar@{-}[d]^{\wr} & D(G)^{G, w} 
		\ar@<.5ex>[l]  \ar@{-}[d]^{\wr}
		\\ 
		\Hom_{D(G)\mod}(D(G), \fg\mod_{\nu}) \ar@<.5ex>[r] 
		\ar@{-}[d]^{\wr} & 
		\Hom_{D(G)\mod}(D(G), D(G)^{G, w}) \ar@<.5ex>[l]  
		\ar@{-}[d]^{\wr}
		\\ 
		\Hom_{D(G)\mod}(\fg\mod_{\nu}^\vee, D(G)) \ar@<.5ex>[r] & 
		\Hom_{D(G)\mod}(D(G)^{G, w}, D(G)) \ar@<.5ex>[l]  
		\ar@{-}[d]^{\wr}
		\\ 
		& {}^{G, w}D(G)},
	\end{equation}
	where to pass between the second and third rows we use for dualizable 
	$D(G)$-modules $\sM$ and $\sN$ the 
	tautological equivalence
	$ 
	\Hom_{D(G)\mod}(\sM, \sN) \simeq \Hom_{D(G)\mod}(\sN^\vee, 
	\sM^\vee). 
	$

	It is straightforward to see that the composition of the right column of 
	equivalences in \eqref{e:bbigdia} is the equivalence induced by inversion $g 
	\mapsto g^{-1}$ on the group
	\[
	\on{inv}_*: D(G)^{G, w} \simeq {}^{G, w}D(G).
	\]

	Accordingly, to prove the proposition, it is enough to show that the 
	standard 
	self-duality of $D(G)^{G, w}$, i.e. the one induced by Verdier duality and the identification of weak invariants and coinvariants, induces an equivalence
	\begin{equation}\label{e:dualsrhoshiftt}
	\fg\mod_{\nu}^\vee \simeq \fg\mod_{-\nu - 2 \rho}. 
	\end{equation}
	%
	%
	%
	%
	%
	Under $\Gamma^G$, the self-duality of $D(G)^{G,w}$ interchanges with 
	the 
	self-duality of $\fg\mod$ given by Lie algebra cohomology, so we are done by Proposition \ref{p:gmodduals}. \end{proof}
	%
	%
	%
	%
	%
	%


	%
	%

\section{parabolic induction for groups and Hecke categories} 
\label{s:pind}
To prove the general case of the localization theorem, we will reduce to the 
maximally singular case using parabolic induction for categorical 
representations. In this section, we first collect 
basic definitions and properties of parabolic induction functors for groups and 
Hecke algebras. We then establish a basic compatibility between these induction 
functors, and deduce a useful corollary on submodules generated by 
Borel-invariant vectors.

The reader mostly interested in representation-theoretic applications may wish to skim Section \ref{ss:defpind} for the basic definitions and read the comments following Corollary \ref{c:pindgben} before proceeding on to Section \ref{s:locII}.


\subsection{Definitions and first properties}\label{ss:defpind}
\subsubsection{} Let $G$ and $B$ be as in Section \ref{s:pn}. Fix a standard 
parabolic subgroup $B \subset P \subset G$.\footnote{Note that for a loop group $G$, these are sometimes called instead {\em parahoric} subgroups.} 
Denote its 
prounipotent radical and Levi quotient by $N_P$ and $L$ respectively. One then has the standard correspondence of groups
\[
 L \leftarrow P \rightarrow G.
\]
\begin{defn}The functor of {\em parabolic induction} is given by the 
composition 
\begin{equation} \label{e:pind}
 \msf{pind}_P^G: D(L)\mod \xrightarrow{\msf{res}_L^P} D(P)\mod 
 \xrightarrow{\msf{ind}_P^G} 
 D(G)\mod.
\end{equation}
\end{defn}

By its definition, parabolic induction is left adjoint to the {\em Jacquet}, or 
{\em parabolic restriction }
functor 
\begin{equation} \label{e:defpres}
\on{pres}_G^P: D(G)\mod \xrightarrow{\msf{res}_G^L} D(P)\mod 
\xrightarrow{\msf{inv}^{N_P}} 
D(L)\mod.
\end{equation}

\begin{re} \label{r:dimthry} Parabolic induction also right adjoint to the 
Jacquet functor, 
although in affine type 
this depends on the choice of a dimension theory on $G$ 
to normalize the equivalence $D(G) \simeq D(G)^\vee$, cf. Section \ref{ss:dualsdmods}.
\end{re}

When $P$ and $G$ are clear from context, we will refer to the functors simply 
as $\on{pind}$ and $\on{pres}$. It is straightforward to see that both functors 
are 
$\DGCat$-linear and commute with limits and colimits. Similarly, $\on{pres}$ 
canonically commutes with duality, and the same holds for $\on{pind}$ with the 
caveat for $G$ of affine type as in Remark \ref{r:dimthry}.



	%


\subsubsection{} Let us pass to Hecke categories. Writing $B_L \simeq B/N_P$ 
for the induced Borel subgroup of $L$, one has a correspondence
\[ B_L \backslash L / B_L \leftarrow B \backslash P / B \rightarrow B 
\backslash G / B.
\]
This induces a correspondence of monoidal functors
\[
    D(B_L \backslash L / B_L) \leftarrow D(B \backslash P/B) \rightarrow D(B \backslash G / B),
\]
wherein the leftward arrow, by the prounipotence of $N_P$, is an equivalence, and the rightward arrow, as it is a pushforward along a closed embedding, is fully faithful. 
\begin{defn} The functor of {\em parabolic induction} is the
composition 
\begin{equation}
\label{e:pindh}
\msf{pind}_{\sH_L}^{\sH_G}: D(B_L \backslash L / 
B_L) \mod \simeq D(B \backslash P / B)\mod \xrightarrow{\msf{ind}} 
D(B 
\backslash G / B)\mod.
\end{equation}
\end{defn}

As before, we have that $\on{pind}_{\sH_L}^{\sH_G}$ is left adjoint to the 
{\em parabolic restriction} functor
	\begin{equation} \label{e:defpresh}
	 \on{pres}_{\sH_G}^{\sH_L}: D(B \backslash G  / B ) \xrightarrow{ \msf{res}} D(B 
	\backslash P 
	/ B)\mod 
	\simeq D(B_L \backslash L / B_L)\mod.
	\end{equation}
\begin{re} It follows from Corollary \ref{c:rad} that $\on{pind}_{\sH_L}^{\sH_G}$ is also canonically right 
adjoint to $\on{pres}_{\sH_G}^{\sH_L}$. 	
\end{re}

	%
	%
When it is clear from context that we are referring to the Hecke categories associated to $P$ and $G$, we will refer to the functors simply as $\on{pind}$ and $\on{pres}$. It is straightforward to see that $\on{pind}$ and $\on{pres}$ are 
$\DGCat$-linear, and commute with limits, colimits, and duality.

\subsection{Commutation of parabolic induction and Borel invariants}

\subsubsection{} Having now discussed parabolic induction functors for groups 
and Hecke categories, we now establish their compatibility. Informally, the 
following theorem states that taking Borel invariant vectors commutes with 
parabolic induction.

\begin{theo}\label{t:invpind} The following diagram commutes up to natural 
equivalence:
	\begin{equation} \label{e:pindcomm}
\begin{tikzcd}
  D(L)\mod \arrow[{rr}, "\inv^{B_L}"] \arrow[dd, "\on{pind}_L^G"'] &&
  D(B_L 
  \backslash 
  L / 
  B_L)\mod 
  \arrow[dd, "\on{pind}_{\sH_L}^{\sH_G}"] \\ \\
  D(G)\mod \arrow[rr, "\inv^B"] && D(B \backslash G / B)\mod.
\end{tikzcd}	
	\end{equation}

\end{theo}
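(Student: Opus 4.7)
The plan is to factor $\on{pind}_L^G = \on{ind}_P^G \circ \on{res}_L^P$ and verify the compatibility with $\inv^B$ for each factor separately. The compatibility of $\on{res}_L^P$ with $\inv^B$ is essentially formal: $\on{res}_L^P$ realizes $D(L)\mod$ as the full subcategory of $D(P)\mod$ on which $N_P$ acts trivially, and since $B = N_P \rtimes B_L$, we have a canonical equivalence $(\on{res}_L^P \sC)^B \simeq \sC^{B_L}$ of $D(B \backslash P / B)$-modules, intertwined with the equivalence $D(B\backslash P/B) \simeq D(B_L \backslash L / B_L)$ from Section \ref{ss:defpind}. This reduces the problem to showing $\inv^B \circ \on{ind}_P^G \simeq \on{ind}_{\sH_P}^{\sH_G} \circ \inv^B$ as functors $D(P)\mod \to D(B\backslash G/B)\mod$.

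For this main step, I would work at the level of bimodules, since all four functors are continuous and $\DGCat$-linear. Writing $\on{ind}_P^G \sD = D(G) \otimes_{D(P)} \sD$ and using that $\inv^B$ is biadjoint to its tensor-up left adjoint (Corollary \ref{c:rad}), and hence in particular preserves colimits, one can commute $\inv^B$ past the relative tensor: the left $D(G)$-action governing $\inv^B$ commutes with the right $D(P)$-action used for induction, yielding
\[
(\on{ind}_P^G \sD)^B \simeq D(B\backslash G) \otimes_{D(P)} \sD.
\]
On the other side, $\on{ind}_{\sH_P}^{\sH_G}(\sD^B) \simeq D(B\backslash G/B) \otimes_{D(B\backslash P/B)} \sD^B$, and the biadjointness of $\inv^B$ applied to $P$ gives $\sD^B \simeq D(B\backslash P) \otimes_{D(P)} \sD$. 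The desired commutation thus reduces to the identity
\[
D(B\backslash G) \; \simeq \; D(B\backslash G/B) \underset{D(B\backslash P/B)}{\otimes} D(B\backslash P)
\]
of $\bigl(D(B\backslash G/B),\, D(P)\bigr)$-bimodules.

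The main obstacle is this last bimodule identity. It would follow by applying Theorem \ref{t:cuff} to the pair $(P, B)$---for which $P/B$ is ind-proper---and to the right $D(P)$-module $D(B\backslash G)$, once one verifies that $D(B\backslash G)$ is generated as a right $D(P)$-module by its right $B$-invariants $D(B\backslash G/B)$. This generation is in turn a consequence of the Bruhat-like stratification $G = \sqcup_{w \in W^P} BwP$ by minimal $W_P$-coset representatives: the compact generators of $D(B\backslash G)$ arise as $!$-pushforwards from the orbits $B\backslash BwP$, each of which is swept out by the right $P$-action on a single $B$-equivariant cell of $B\backslash G/B$. Granting this, the identifications chain to give the desired natural equivalence $(\on{ind}_P^G \sD)^B \simeq \on{ind}_{\sH_P}^{\sH_G}(\sD^B)$, and hence the commutativity of the diagram.
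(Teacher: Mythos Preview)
Your reduction over-reaches, and the resulting bimodule identity is false. You pass from the statement for $D(L)$-modules to the strictly stronger claim that $\inv^B \circ \on{ind}_P^G \simeq \on{ind}_{\sH_P}^{\sH_G} \circ \inv^B$ holds for \emph{all} $D(P)$-modules, which unwinds to
\[
D(B\backslash G) \;\simeq\; D(B\backslash G/B) \underset{D(B\backslash P/B)}{\otimes} D(B\backslash P),
\]
i.e.\ that $D(B\backslash G)$ is generated by its right-$B$-invariants as a right $D(P)$-module. Take $P = B$ and $G$ of semisimple rank one. Then the claim becomes that $D(B\backslash G) \simeq D(\mathbb{P}^1)$ is right-$B$-monodromic. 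On the open Schubert cell $B\backslash BsB \simeq \mathbb{A}^1$ the right $B$-action is transitive with stabilizer $T$, so the $B$-monodromic subcategory of $D(\mathbb{A}^1)$ is the cocomplete subcategory generated by $\OO_{\mathbb{A}^1}$. But the exponential D-module $j_!\,e^x$ has vanishing de Rham cohomology on $\mathbb{A}^1$, hence is right orthogonal to $\OO_{\mathbb{A}^1}$; it is not $B$-monodromic. Your Bruhat argument goes wrong at exactly this point: that $B\backslash BwP$ is a single right $P$-orbit does \emph{not} imply that $D(B\backslash BwP)$ is generated under the $D(P)$-action by $B$-equivariant objects.

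The paper avoids this by never leaving the $D(L)$-module level. Both composites are written as $-\otimes_{D(L)}$ with a $(D(B\backslash G/B), D(L))$-bimodule, and the identity to prove is
\[
D(B\backslash G/B) \underset{D(B\backslash P/B)}{\otimes} D(B\backslash P/N_P) \;\xrightarrow{\ \sim\ }\; D(B\backslash G/N_P).
\]
Fully faithfulness again comes from Theorem~\ref{t:cuff}. The essential surjectivity is where the quotient by $N_P$ is decisive: writing $w = uy$ with $u$ minimal in $wW_L$ and $y \in W_L$, one has an isomorphism $B\backslash C_u \overset{B}{\times} C_y / N_P \simeq B\backslash C_y / N_P$, so convolution with $j_{u,*}$ carries $D(B\backslash C_y/N_P)$ onto $j_{w,*}D(B\backslash C_w/N_P)$. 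Without the $/N_P$ this isomorphism fails on dimension grounds whenever $\ell(u) > 0$. Your overall strategy is the same as the paper's; the fix is simply to run it with $N_P$-coinvariants taken first, i.e.\ with $D(B\backslash G/N_P)$ in place of $D(B\backslash G)$.
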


\begin{proof} Note that the appearing functors are $\DGCat$-linear and commute with 
colimits. In particular, the composition $\on{pind}_{\sH_L}^{\sH_G} \circ \on{inv}^{B_L}$ is 
given by tensoring over $D(L)$ with the $D(B\backslash G / B) \otimes 
D(L)$-module 
	\[
	   D(B \backslash G / B) \underset{D(B\backslash P /B)}{\otimes} D(B_L 
	   \backslash L) \simeq D(B \backslash G / B) \underset{D(B\backslash P 
	   /B)}{\otimes} D(B \backslash P / N_P). 
	\]
Similarly, the composition $\on{inv}^B \circ \on{pind}_L^G$ is given by tensoring 
over $D(L)$ with the $D(B \backslash G / B) \otimes D(L)$-module 
\[
  D(B \backslash G) \underset{D(G)}{\otimes} D(G/N_P) \simeq D(B \backslash G / 
  N_P). 
\]	
%
		
	%
%
	%

		%

        Therefore, it is enough to show the natural convolution morphism  
		\begin{equation} \label{e:pindBinvunivcase} D(B \backslash G / 
		B) 
		\underset{D(B  
		\backslash P / B)}{\otimes} 
		D(B\backslash P/N_P) \rightarrow D(B \backslash G/N_P). 
		\end{equation}
		is an equivalence. As this identifies with the counit map for the 
		action of $D(L)$ 
		on 
		$D(B \backslash G / N_P)$ via right convolution, by Theorem 
		\ref{t:cuff} it is fully faithful. 
		
		It remains to prove 
		the essential 
		surjectivity of \eqref{e:pindBinvunivcase}. For this, let $W$ denote 
		the 
		Weyl group of $G$, and for each $w \in 
		W$ consider the Schubert cell \[C_w := BwB \subset G.\]Write $j_w: 
		B \backslash C_w \rightarrow B \backslash G$ for the associated locally 
		closed embedding. In view of the 
		Cousin filtration and 
		the 
		already established fully faithfulness, it 
		suffices to 
		show that $j_{w,*}\mathscr{M}$ belongs to the essential image of 
		\eqref{e:pindBinvunivcase} for any 
		$w 
		\in W$ and 
		any $\mathscr{M}$ in $D(B \backslash C_w/N_P)$.
		
		In fact, we will obtain all such $\mathscr{M}$ by convolution from 
		D-modules on a single Schubert cell of $L$, as follows. Write $w = uy$ 
		where $y \in W_L$ and $u$ is the minimal length 
		representative 
		of the coset $wW_L$. Then, writing $\ell$ for the standard length function on the Coxeter group $W$,  $\ell(w) = \ell(u) + \ell(y)$ and convolution 
		yields an isomorphism  \begin{equation}\label{e:step1} C_u 
		\stackrel{B}{\times} C_y \longrightarrow  
		C_w. \end{equation}In addition, consider the tautological projection 
		\begin{equation*} 
		   \pi: C_u \stackrel{B}{\times} C_y  \rightarrow B \backslash C_y. 
		\end{equation*}
		By our assumption on $u$ and $y$, this induces an isomorphism after 
		quotienting by $B$ and $N_P$, i.e. 
		\begin{equation} \label{e:step2}
		 \pi:  B \backslash C_u \stackrel{B}{\times} C_y  /N_P \simeq B 
		\backslash C_y / N_P. 
		\end{equation}
		Finally, note the resulting composition 
		\begin{equation}\label{e:itsjustconv}
		    D(B \backslash C_y / N_P) \overset{\eqref{e:step2}}{\simeq}   D(B 
		    \backslash C_u \stackrel{B}{\times} C_y / N_P)   
		    \overset{\eqref{e:step1}}{\simeq}  D(B \backslash C_{w} / N_P) 
		    \xrightarrow{ j_{w, *}} D(B \backslash G / N_P)
		\end{equation}
		is given by convolution on the left with the costandard object $j_{u, 
		*}$. In particular, the image of \eqref{e:itsjustconv} lies in the 
		image of \eqref{e:pindBinvunivcase}, as desired. 	\end{proof}

	
	\begin{re} By 	
	passing to the right 
	adjoints of the functors appearing in 
	\eqref{e:pindcomm}, one obtains a similar commutative diagram for parabolic restriction. 
	\end{re}

\subsection{Compatibility with generation by Borel invariants}

\subsubsection{} \label{s:binvs}We now record a consequence of Theorem 
\ref{t:invpind} which 
will be of use in what follows. Recall that to a $D(L)$-module $\scc$ or a
$D(G)$-module $\sD$ one has the associated submodules generated by their Borel 
invariants
\[
      \scc^{B_L\hspace{-.6mm} \on{-gen}} \quad \text{and} \quad \sD^{B\hspace{-.6mm} \on{-gen}},
\]
respectively, cf. Section \ref{s:genbykinvs} for their definition.

\subsubsection{}The following corollary, said informally, asserts that the 
operation of taking 
the submodule generated by Borel invariants commutes with parabolic induction 
and restriction. 

\begin{cor}   \label{c:pindgben}There is a natural isomorphism of functors 
$D(L)\mod \rightarrow 
D(G)\mod$:
\begin{equation} \label{e:binvind}
    \on{pind}_L^G (\scc^{B_L\hspace{-.6mm} \on{-gen}}) \simeq  (\on{pind}_L^G \scc)^{B\hspace{-.6mm} \on{-gen}}.
\end{equation}
Similarly, there is a natural isomorphism of functors $D(G)\mod \rightarrow 
D(L)\mod$:
\begin{equation} \label{e:binvres}
   \on{pres}_G^L( \sD^{B\hspace{-.6mm} \on{-gen}}) \simeq ( \on{pres}_G^L \sD)^{B_L\hspace{-.6mm} \on{-gen}}. 
\end{equation}    
\end{cor}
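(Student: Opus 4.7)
The plan is to prove both isomorphisms by rewriting the two sides in terms of Hecke category modules via Corollary \ref{c:rad}, and then matching them using Theorem \ref{t:invpind}.

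For \eqref{e:binvind}, I would first apply Corollary \ref{c:rad} to rewrite $\scc^{B_L\hspace{-.6mm} \on{-gen}} \simeq D(L/B_L) \underset{\sH_L}\otimes \scc^{B_L}$ as $D(L)$-modules, where I abbreviate $\sH_L := D(B_L \backslash L / B_L)$ and $\sH_G := D(B \backslash G / B)$. Since $\on{pind}_L^G$ is $\DGCat$-linear and commutes with colimits, this gives
\[
\on{pind}_L^G(\scc^{B_L\hspace{-.6mm} \on{-gen}}) \simeq \on{pind}_L^G(D(L/B_L)) \underset{\sH_L}\otimes \scc^{B_L}.
\]
A direct computation using the identification $L/B_L \simeq P/B$ (since $L = P/N_P$ and $B_L = B/N_P$) yields $\on{pind}_L^G(D(L/B_L)) \simeq D(G) \underset{D(P)}\otimes D(P/B) \simeq D(G/B)$ as $(D(G),\sH_L)$-bimodules, so the left-hand side simplifies to $D(G/B) \underset{\sH_L}\otimes \scc^{B_L}$. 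For the right-hand side, Corollary \ref{c:rad} gives $(\on{pind}_L^G \scc)^{B\hspace{-.6mm} \on{-gen}} \simeq D(G/B) \underset{\sH_G}\otimes (\on{pind}_L^G \scc)^B$, and Theorem \ref{t:invpind} identifies $(\on{pind}_L^G \scc)^B \simeq \on{pind}_{\sH_L}^{\sH_G}(\scc^{B_L}) \simeq \sH_G \underset{\sH_L}\otimes \scc^{B_L}$. Combining these identifies the right-hand side with $D(G/B) \underset{\sH_L}\otimes \scc^{B_L}$ as well.

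For \eqref{e:binvres}, I would argue analogously, using the adjoint form of Theorem \ref{t:invpind}. Passing to right adjoints in the commutative square of that theorem, and noting that by Corollary \ref{c:rad} the right adjoint of $\inv^K$ coincides with its left adjoint $D(H/K) \underset{\sH}\otimes -$, one obtains
\[
\on{pres}_G^L \circ (D(G/B) \underset{\sH_G}\otimes -) \simeq (D(L/B_L) \underset{\sH_L}\otimes -) \circ \on{pres}_{\sH_G}^{\sH_L},
\]
together with the identification $(\on{pres}_G^L \sD)^{B_L} \simeq \on{pres}_{\sH_G}^{\sH_L}(\sD^B)$. Writing $\sD^{B\hspace{-.6mm} \on{-gen}} \simeq D(G/B) \underset{\sH_G}\otimes \sD^B$ and $(\on{pres}_G^L \sD)^{B_L\hspace{-.6mm} \on{-gen}} \simeq D(L/B_L) \underset{\sH_L}\otimes (\on{pres}_G^L \sD)^{B_L}$, both sides of \eqref{e:binvres} reduce to $D(L/B_L) \underset{\sH_L}\otimes \on{pres}_{\sH_G}^{\sH_L}(\sD^B)$.

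The main point to verify beyond the abstract matching is \emph{naturality}: the constructed isomorphisms should be induced by the canonical maps coming from the inclusion $\scc^{B_L\hspace{-.6mm} \on{-gen}} \hookrightarrow \scc$ in the first case, and the analogous inclusion in the second. This amounts to a coherence check that at each step of the chain of equivalences the isomorphism is implemented by the appropriate unit/counit of the adjunctions in Corollary \ref{c:rad} and the natural transformation underlying Theorem \ref{t:invpind}. This is routine but somewhat laborious; I expect it to be the main place where care is needed.
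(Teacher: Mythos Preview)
Your proposal is correct and takes essentially the same approach as the paper: both arguments reduce \eqref{e:binvind} to identifying $D(G/B) \underset{\sH_L}\otimes \scc^{B_L}$ on each side, using Theorem \ref{t:invpind} (via the key equivalence \eqref{e:pindBinvunivcase}) as the crucial input, and then handle \eqref{e:binvres} by adjunction. The only cosmetic difference is that the paper phrases the computation at the level of the representing $D(L)\otimes D(G)$-bimodules, while you work directly with the functors; these are two packagings of the same manipulation.
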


\begin{proof} We begin with \eqref{e:binvind}. Note that the left hand side of 
\eqref{e:binvind} is given by tensoring over $D(L)$ with the $D(L) \otimes 
D(G)$-module 
	\[
	     D(G) \underset{D(P)}{\otimes} D(L/B_L) \underset{D(B_L \backslash L / 
	     B_L)}{\otimes} D(B_L \backslash L) \simeq D(G/B) \underset{D(B_L 
	     \backslash L / 
	     B_L)}{\otimes} D(B_L \backslash L).
	\]

	Similarly, the right hand side is given by tensoring 
	with 
	\begin{align*}
	       D(G/B) \underset{D(B \backslash G / B)}{\otimes} D(B \backslash G) 
	       \underset{D(G)}{\otimes} D(G/N_P)  &    \simeq D(G/B)	 
	       \underset{D(B 
	       	\backslash G / B)}{\otimes} D(B 
	       \backslash G \stackrel{G} \times G/N_P)
	       \\ 
	       & \simeq D(G/B)    \underset{D(B \backslash G / B)}{\otimes} D(B 
	       \backslash G / N_P).
	       	\intertext{To identify the two, we may apply 
	       	\eqref{e:pindBinvunivcase} to $D(B \backslash G / N_P)$ to further rewrite the right hand side as }
	       \overset{\eqref{e:pindBinvunivcase}}&{\simeq}  D(G/B) \underset{D(B 
	       \backslash G / B)}{\otimes} D(B \backslash G / B) \underset{D(B_L 
	       	\backslash L / 
	       	B_L)}{\otimes} D(B_L \backslash L)
       	\\ & \simeq  D(G/B) \underset{D(B_L 
       		\backslash L / 
       		B_L)}{\otimes} D(B_L \backslash L),
	\end{align*}
	as desired. The identification \eqref{e:binvres} now follows by either 
	adjunction or a similar analysis. \end{proof}

Corollary \ref{c:pindgben} in particular implies that generation by Borel 
invariant vectors, cf. Definition \ref{d:genkinvs}, survives parabolic 
induction and restriction. So, we may control such categorical representations 
of groups via Hecke categories. Let us now apply this to localization theory.

\section{Localization II: the parabolic induction of the Steinberg module and 
Lie 
algebra representations}
\label{s:locII}

Let $G$ and $B$ be as in Section \ref{s:catliereps}. Recall the set of integral weights  $\Lambda$, cf. Sections \ref{ss:grpslies} and \ref{ss:liealgreps}, and fix $\lambda \in 
\Lambda$ such that 
the corresponding Verma module $M_\lambda$ is {\em irreducible}. 

\subsubsection{} In this 
section, we prove localization theorems for the corresponding category 
\begin{equation}
\label{e:gmodlambda} \fg\mod_{\lambda}. 
\end{equation}
We will do so by identifying this with the parabolic induction of a 
similar category for a Levi. The latter will be, by construction, maximally 
singular. Applying parabolic induction to its two geometric realizations from 
Section \ref{s:locI} will provide the desired localization of 
\eqref{e:gmodlambda}. 

\subsection{The parabolic induction of the Steinberg module}
\label{ss:parindst}
\subsubsection{} 
Note that, since $M_\lambda$ is irreducible, by the representation theory of 
$\fsl_2$ it follows that the weight 
$\lambda$ is anti-dominant, i.e. for any simple coroot $\halpha$ we have 
\begin{equation} \label{e:ineq}
  \langle \halpha, \lambda + \rho \rangle \leqslant 0.
\end{equation}

\begin{re}In our present setting the reverse implication holds too, 
but this is	a nontrivial theorem of Kac--Kazhdan in the affine case (at 
non-critical level) \cite{kk}. We emphasize that we do not need this fact. 
\end{re}

In particular, the stabilizer of $\lambda$ under the dot action of $W$ is 
a parabolic subgroup $W_P$ corresponding to a proper parabolic $P \supset B.$ 
As in 
Section \ref{s:pind},  let us write $N_P$ for the prounipotent radical of $P$ 
and $L$ for its Levi quotient.

\subsubsection{} Let us write $\fl$ for the Lie algebra of $L$, and consider 
the corresponding $D(L)$-module  
\[
   \fl\mod_{\lambda}.
\]
By construction, it is maximally singular, i.e. falls into the regime of 
Section \ref{s:locI}. As in Section \ref{s:pind}, we may consider its parabolic 
induction from $L$ to $G$:
\begin{equation}
\label{e:pindlmod}
   \on{pind}_L^G(\fl\mod_\lambda).
\end{equation}

We now use the results of Section \ref{s:locI}, along with a compatibility of 
Borel invariants and parabolic induction established in Section \ref{s:pind}, 
to rewrite \eqref{e:pindlmod} geometrically. In the following proposition, we 
write $\rho_L$ for 
the half sum of positive roots of $L$. In addition, for a generic character 
$\psi$ of $N_L$, we denote by the same letter its inflation to $N$, i.e. the 
composition 
\[
     N \rightarrow N/N_P \simeq N_L \xrightarrow{\psi} \mathbb{G}_a.
\] 
In particular, for a $D(G)$-module $\scc$, we may consider its 
Steinberg--Whittaker invariants
\[
   \scc^{N, \psi, \St} := (\scc^{N_P})^{N_L, \psi, \St}, 
\]
cf. Section \ref{sss:defstwhit} for the definition the Steinberg--Whittaker 
invariants of a $D(L)$-module. 

\begin{ex} \label{ex:moninvsBig}Note that if $L$ is the abstract Cartan $T$, 
the above definition simply recovers the 
monodromic invariants, i.e. 
	\[
	   \sC^{N, \psi, \St} \simeq \sC^{B\hspace{-.6mm} \on{-mon}},
	\]
cf. Example \ref{ex:moninvs} for the definition of the latter. 	\end{ex}

\begin{pro}\label{p:3realz} There are canonical $D(G)$-equivariant equivalences 
	\[
	    \on{pind}_L^G(\fl\mod_{\lambda}) \simeq D(G/N, \psi, \St) \simeq 
	    D(G/N_P)^{L, w, -\lambda - 2 
	    \rho_L} . 
	\]
\end{pro}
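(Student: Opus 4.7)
The plan is to reduce both equivalences to the maximally singular case treated in Section~\ref{s:locI}, by applying $\on{pind}_L^G$ to two alternative descriptions of $\fl\mod_\lambda$. By our choice of $P$, the weight $\lambda$ is stabilized by the Weyl group of $L$ under the dot action, so $\lambda$ is maximally singular for $L$, and both Theorem~\ref{t:singloc} and Proposition~\ref{p:rhoshift} apply to the pair $(L, \lambda)$.

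For the first equivalence, I would combine Theorem~\ref{t:singloc} applied to $L$ with Proposition~\ref{p:bgenstwhit} to identify $\fl\mod_\lambda$ with the representing object for the functor $\sM \mapsto \sM^{N_L, \psi, \St}$ on $D(L)\mod$. The adjunction $\on{pind}_L^G \dashv \on{pres}_G^L = \on{inv}^{N_P}$ then yields, for any $D(G)$-module $\scc$,
\[
\Hom_{D(G)\mod}(\on{pind}_L^G(\fl\mod_\lambda), \scc) \simeq \Hom_{D(L)\mod}(\fl\mod_\lambda, \scc^{N_P}) \simeq (\scc^{N_P})^{N_L, \psi, \St} = \scc^{N, \psi, \St}.
\]
Thus $\on{pind}_L^G(\fl\mod_\lambda)$ represents the Steinberg--Whittaker invariants functor on $D(G)\mod$, which is precisely how the notation $D(G/N, \psi, \St)$ is to be interpreted, yielding the first equivalence.

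For the second equivalence, I would start from Proposition~\ref{p:rhoshift} applied to $L$, giving the $D(L)$-equivariant equivalence $\fl\mod_\lambda \simeq D(L)^{L, w, -\lambda - 2\rho_L}$, where the weak $L$-invariance is for the residual right $L$-action on $D(L)$. The key claim is that $\on{pind}_L^G$ commutes with taking weak $L$-invariants with generalized character $-\lambda - 2\rho_L$. This uses the dualizability of $\fl\mod_{-\lambda - 2\rho_L}$ (Corollary~\ref{c:gmodrad}) with dual $\fl\mod_\lambda$ (Proposition~\ref{p:gmodduals}): the weak invariants, a priori a limit, may be rewritten as the categorical tensor product $\fl\mod_\lambda \otimes_{D(L)} -$, which commutes with the $D(L)$-tensor product implicit in $\on{pind}_L^G$. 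Combined with the direct computation $\on{pind}_L^G(D(L)) \simeq D(G) \otimes_{D(P)} D(P/N_P) \simeq D(G/N_P)$ as $(D(G), D(L))$-bimodules, this yields the second equivalence.

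The main technical delicacy lies in the second step: one must carefully distinguish the two commuting $D(L)$-actions on $D(L)$---the one along which $\on{pind}_L^G$ induces and the residual one carrying the weak-invariance data---and verify that the commutation of $\on{pind}_L^G$ with weak $(L, w, \chi)$-invariants is legitimate. This is precisely where the dualizability of $\fl\mod_\chi$ is essential, as it converts the limit defining weak invariants into a tensor product that commutes past $\on{pind}_L^G$.
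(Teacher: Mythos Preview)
Your proposal is correct and follows essentially the same route as the paper: both proofs apply $\on{pind}_L^G$ to the two realizations of $\fl\mod_\lambda$ from Section~\ref{s:locI} (Theorem~\ref{t:singloc}/Proposition~\ref{p:bgenstwhit} and Proposition~\ref{p:rhoshift}), and then argue that $\on{pind}_L^G$ commutes with the relevant invariant constructions on the residual $D(L)$-action. The only differences are presentational: for the first equivalence you package the commutation as a representability statement via the $(\on{pind},\on{pres})$ adjunction, whereas the paper unwinds $\on{pind}_L^G(D(L/N_L,\psi,\St))$ directly using the $\DGCat$-linearity and continuity of $(N_L,\psi,\St)$-invariants; for the second, you rewrite the weak invariants as a tensor product via dualizability of $\fl\mod_{-\lambda-2\rho_L}$, whereas the paper invokes continuity and $\DGCat$-linearity of $(L,w,-\lambda-2\rho_L)$-invariants inherited from those of $(L,w)$-invariants via the colocalization \eqref{e:adjweakinv}. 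One small point: your phrase ``which is precisely how the notation $D(G/N,\psi,\St)$ is to be interpreted'' is slightly elliptical, since in the paper $D(G/N,\psi,\St)$ is \emph{defined} as the Steinberg--Whittaker invariants of $D(G)$ under the right action, and the identification of this with the representing object for $\sC\mapsto\sC^{N,\psi,\St}$ is exactly the short computation the paper carries out; but this is immediate from your adjunction argument applied to $\sC=D(G)$.
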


\begin{proof} In Section \ref{s:locI}, we saw $D(L)$-equivariant equivalences 
	\[
         \fl\mod_{\lambda} \simeq D(L/N_L, \psi, \St) \simeq D(L)^{L, w, 
         -\lambda - 2 \rho_L}. 	
	\]
In particular, we may parabolically induce these to obtain equivalences 
\[
   \on{pind}(\fl\mod_{\lambda}) \simeq \on{pind}(D(L/N_L, \psi, \St)) 
   \simeq 
   \on{pind}(D(L)^{L, w, 
	-\lambda - 2 \rho_L}).
\]
To rewrite the middle term, we first note the $\DGCat$-linearity and continuity 
of $(N_L, \psi, \St)$ invariants. Indeed, these follow via colocalization from the analogous assertions for 
$(N, \psi)$ invariants. Using this 
observation, we may rewrite the middle term as
\begin{align*}
\on{pind}(D(L/N_L, \psi, \St)) &\simeq   
 (\on{pind}(D(L)))^{N_L, \psi, \St} \\ & \simeq (D(G) \underset{D(P)} \otimes
 D(P/N_P))^{N_L, \psi, \St} \\  & \simeq D(G/N_P)^{N_L, \psi, \St} \\ & 
 \simeq D(G/N, \psi, \St).  
\end{align*}
Similarly, using the $\DGCat$-linearity and continuity of $(L, w, -\lambda - 2 
\rho_L)$ invariants, which again follow their analogues for $(L,w)$ invariants, we may rewrite the 
right hand term as 
\begin{align*}
\on{pind}( D(L)^{L, w, -\lambda - 2 \rho_L}) & \simeq D(G) 
\underset{D(P)} \otimes {D(P/N_P)^{L, w, -\lambda - 2 \rho_L}} \\ & \simeq 
D(G/N_P)^{L, 
w, -\lambda - 2 \rho_L},
\end{align*}
as desired. 
\end{proof}

\begin{re}  \label{r:whitbinvs} Note that by Corollary \ref{c:pindgben} and 
Theorem \ref{t:singloc}, 
we have 
\[ D(G/N, \psi, \St) \simeq D(G/N, \psi)^{B\hspace{-.6mm} \on{-gen}}. 
\]
\end{re}

\subsubsection{} Let us use Proposition \ref{p:3realz} to obtain a  
collection of compact generators for 
\begin{equation}
\label{e:pindlbinv}
     \on{pind}_L^G(\fl\mod_{\lambda})^B.
\end{equation}

To do so, write $\sM_e$ for the image of the Verma module $M_\lambda^L$ in 
$\fl\mod_{\lambda}^{B_L}$ 
under the unit of adjunction
\[
     \fl\mod_{\lambda}^{B_L} \rightarrow \on{pind}_L^G( \fl\mod_{\lambda})^B. 
\]
Recall that $j_{w, !} \star M_\lambda^L \simeq M_\lambda^L$ for $w \in W_P$, 
cf. Lemma \ref{l:intops}. In particular, we may consider the objects 
\[
     \sM_w  := j_{w, !} \star \sM_e, \quad \quad \text{for } w \in W/W_P.
\]

\begin{cor} The objects $\sM_w$, for $w \in W/W_P$, compactly generate 
\eqref{e:pindlbinv}. Moreover, we have the vanishing 
	\begin{equation} \label{e:vanwhit}
	    \on{Hom}(\sM_w, \sM_e) \simeq \begin{cases} k & \text{if } w = 
	    e, \\ 0 & \text{otherwise.} \end{cases} 
	\end{equation}
\end{cor}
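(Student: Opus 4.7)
The plan is to reduce the first assertion to a computation in the Hecke category, and the second to a support argument in the weakly equivariant realization. By Theorem \ref{t:invpind},
$$\on{pind}_L^G(\fl\mod_{\lambda})^B \simeq D(B \backslash G / B) \underset{D(B \backslash P / B)}{\otimes} \fl\mod_\lambda^{B_L}.$$
Applied to $L$, Proposition \ref{p:blockofO} shows that $\fl\mod_\lambda^{B_L}$ is compactly generated by $M_\lambda^L$, the unique Verma in this block since $\lambda$ is fixed by the dot action of $W_L = W_P$; the argument of Theorem \ref{t:singloc} applied to $L$ furthermore identifies this category with $\on{Vect}$. Since $\langle \halpha_s, \lambda + \rho \rangle = 0$ for every simple reflection $s$ of $W_P$, Lemma \ref{l:intops} and induction on length give $j_{y, !} \star M_\lambda^L \simeq M_\lambda^L$ for every $y \in W_P$. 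Next I would use that $D(B \backslash G / B)$ is compactly generated by the standards $\{j_{w, !}\}_{w \in W}$, whence the tensor product is compactly generated by $\{j_{w, !} \otimes M_\lambda^L\}_{w \in W}$. For $w \in W$, I would decompose $w = uy$ with $u$ the minimal length representative of $wW_P$ and $y \in W_P$; then $\ell(w) = \ell(u) + \ell(y)$ yields $j_{w, !} \simeq j_{u, !} \star j_{y, !}$ in $D(B \backslash G / B)$, and since $j_{y, !}$ lies in the subcategory $D(B \backslash P / B)$, I can move it across the tensor product:
$$j_{w, !} \otimes M_\lambda^L \simeq j_{u, !} \otimes (j_{y, !} \star M_\lambda^L) \simeq j_{u, !} \otimes M_\lambda^L \simeq \sM_u,$$
so the $\sM_u$ for $u \in W/W_P$ already compactly generate.

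For the Hom computation, I plan to use the weakly equivariant realization of Proposition \ref{p:3realz}:
$$\on{pind}_L^G(\fl\mod_\lambda)^B \simeq D(B \backslash G / N_P)^{L, w, -\lambda - 2\rho_L}.$$
Here $B \backslash G / N_P$ is an $L$-torsor over $B \backslash G / P$, and the latter is stratified by $B$-orbits on $G/P$ indexed by $W/W_P$. On each stratum, the $(L, w, -\lambda - 2\rho_L)$-equivariant D-modules identify with $\fl\mod_\lambda^{B_L} \simeq \on{Vect}$, and, tracing the constructions, $\sM_u$ corresponds to the $!$-extension $j_{u, !}\F_u$ of the generator from stratum $u$. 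The stratum $u = e$ is the closed $B$-orbit $BP/P = \{P\}$, so for any $u \neq e$ the stratum for $u$ lies in the open complement of the closed stratum for $e$; base change then yields $j_u^! j_{e, !}\F_e = 0$, whence
$$\Hom(\sM_u, \sM_e) \simeq \Hom(\F_u, j_u^! j_{e, !}\F_e) = 0.$$
For $u = e$, fully faithfulness of the closed embedding $j_{e, !}$ reduces the Hom to $\End(M_\lambda^L) \simeq k$ inside $\fl\mod_\lambda^{B_L} \simeq \on{Vect}$.

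The main obstacle I expect is the identification, in the second paragraph, of $\sM_u$ with $j_{u, !}\F_u$ under the realization of Proposition \ref{p:3realz}. This will require carefully unwinding that proposition and the maximally singular Theorem \ref{t:singloc} applied on $L$, to verify that the unit $M_\lambda^L \to \sM_e$ of parabolic induction picks out the compact generator on the closed stratum; the remaining $\sM_u$ then follow by applying the Hecke-equivariant structure computed in the first paragraph, since $j_{u, !}$ convolved with an object supported on cell $e$ produces the $!$-extension from cell $u$.
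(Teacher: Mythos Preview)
Your proposal is correct, but the paper's argument is shorter and uses a single realization. The paper simply invokes Proposition~\ref{p:3realz} and Remark~\ref{r:whitbinvs} to identify
\[
\on{pind}_L^G(\fl\mod_\lambda)^B \simeq D(B \backslash G / N, \psi),
\]
observes that under this identification $\sM_e$ is the unique simple object of the closed substratum $D(B \backslash P / N, \psi)$, and that the $\sM_w$ are then the standard objects of the stratified category $D(B \backslash G / N, \psi)$. Both compact generation and the Hom computation are then standard features of a category with a stratification by copies of $\on{Vect}$.

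Your route differs in two ways. For generation you work directly with the Hecke tensor description and the factorization $j_{w,!} \simeq j_{u,!} \star j_{y,!}$, which is a perfectly good argument and in fact recapitulates the mechanism inside the proof of Theorem~\ref{t:invpind}. For the Hom computation you switch to the weakly $L$-equivariant model $D(B \backslash G / N_P)^{L, w, -\lambda - 2\rho_L}$ and stratify via the map to $B \backslash G / P$. This is also valid, and the obstacle you flag (matching $\sM_u$ with the $!$-extension from cell $u$) is real but tractable: it amounts to checking that the unit of parabolic induction corresponds to pushforward from the closed stratum $P/N_P \hookrightarrow G/N_P$, after which Hecke equivariance does the rest. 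The advantage of the paper's Whittaker model is that this matching is immediate from the way Proposition~\ref{p:3realz} is set up, so no separate verification is needed; the advantage of your approach is that it makes the $W/W_P$-parametrization transparent without having to think about which $B$-orbits support $\psi$-equivariant sheaves.
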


\begin{proof} Proposition \ref{p:3realz} and Remark \ref{r:whitbinvs} yield an 
identification
	\[
\on{pind}_L^G(\fl\mod_\lambda)^B \simeq D(B \backslash G / N, \psi).
\]
By construction this identifies $\sM_e$ with the simple object of $D(B 
\backslash P / N, \psi)$. In particular the $\sM_w$, for $w \in W/W_P$, are identified with the 
standard objects of $D(B \backslash G/ N, \psi)$, which makes the claims 
manifest.  
\end{proof}

\subsection{The parabolic induction of the Steinberg module and Lie algebra 
representations}

\subsubsection{} To prove the localization theorems for $\fg\mod_\lambda$, it 
is enough to identify it with $\on{pind}_L^G(\fl\mod_\lambda)$. We now observe that 
there is a canonical functor in one direction.

\subsubsection{} Indeed, note that parabolic induction of Lie algebra 
representations is a $D(L)$-equivariant functor 
\begin{equation}
  \on{pind}_{\fl}^{\fg}: \fl\mod \rightarrow \fg\mod^{N_P}, \label{e:parindreps}
\end{equation}
cf. Section \ref{sss:eqindres} for the equivariance data. 

\begin{re}
In the affine case, if $\lambda$ is a weight of level $\kappa$, by this we mean the functor of level $\kappa$ parabolic induction
\[
   \fl\mod \rightarrow \hk\mod^{N_P} \hspace{1mm} \subset \hspace{1mm} \fg\mod. 
\]
\end{re}
\noindent By adjunction, \eqref{e:parindreps} induces a 
$D(G)$-equivariant functor
\[
    \on{pind}_L^G(\fl\mod) \rightarrow \fg\mod.
\]
As parabolic induction sends Verma modules to Verma modules, this restricts to 
a $D(G)$-equivariant functor 
\begin{equation} \label{e:pindreps}
  \on{pind}_L^G(\fl\mod_\lambda) \rightarrow \fg\mod_{\lambda}.
\end{equation}

\subsubsection{} We now show that the above yields the sought-for identification. 

\begin{theo} The functor \eqref{e:pindreps} is an equivalence. 
\label{t:gmodparind} 
\end{theo}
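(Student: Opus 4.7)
The approach is to apply Corollary \ref{c:checkonhwvs} to the functor $\Theta$ of \eqref{e:pindreps}, reducing the theorem to fully faithfulness of the induced functor $\Theta^B$ on $B$-invariants. Both sides are generated by their $B$-invariants: the target by the very definition of $\fg\mod_\lambda$ as the $D(G)$-submodule generated by $M_\lambda$, and the source by Proposition \ref{p:3realz} combined with Remark \ref{r:whitbinvs}. Essential surjectivity of $\Theta$ is then automatic, since once it is fully faithful its essential image is a $D(G)$-stable, colimit-closed full subcategory of $\fg\mod_\lambda$ containing $M_\lambda$, which must be everything by definition of $\fg\mod_\lambda$.

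To verify fully faithfulness of $\Theta^B$, I would exhibit both sides as $D(B\backslash G/B)$-modules with an explicit set of compact generators indexed by $W/W_P$. Using Theorem \ref{t:invpind} and the fact that $\fl\mod_\lambda^{B_L} \simeq \on{Vect}$ with generator $M_\lambda^L$ (since $\lambda$ is maximally singular for $L$, cf. \eqref{e:gmodline}), the source of $\Theta^B$ is compactly generated by the objects $\sM_w = j_{w,!} \star \sM_e$, while the target is compactly generated by the Verma modules $M_{w\cdot \lambda}$, for $w$ ranging over minimal length representatives of $W/W_P$ (via Proposition \ref{p:blockofO} and the $W_P$-stabilizer of $\lambda$). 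Iterated application of Lemma \ref{l:intops} along a reduced expression for such a $w$, using that $\lambda$ is antidominant and $w$ is of minimal length so the antidominance inequality \eqref{e:ineq} propagates at each intermediate weight, shows $\Theta^B(\sM_w) \simeq M_{w\cdot \lambda}$.

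The verification then reduces to matching Hom complexes among the generators. By $D(B\backslash G/B)$-equivariance it suffices to compare $\on{RHom}(\sM_w, \sM_e)$ with $\on{RHom}(M_{w\cdot\lambda}, M_\lambda)$, and by \eqref{e:vanwhit} the former equals $k \cdot \delta_{w,e}$ concentrated in degree zero. For the latter, the key input is that $\lambda$ antidominant with $M_\lambda$ irreducible forces $M_\lambda = L_\lambda = M_\lambda^\vee$, so that $\on{Ext}^\bullet(M_{w\cdot\lambda}, M_\lambda)$ is computed as morphisms from a standard to a costandard object in Category $\mathscr{O}$, which by the standard highest-weight-category vanishing is $k$ in degree zero if $w \in W_P$ and zero otherwise. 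The principal obstacle is establishing this Ext-vanishing uniformly in the affine setting at a noncritical integral level; while the mechanism is the same as in finite type, it deserves verification. An alternative avoiding the explicit Ext computation is to exhibit both sides of $\Theta^B$ as satisfying the universal property of the parabolic induction from $\sH_P$ of the sign character, obtaining the equivalence abstractly from Hecke-theoretic considerations.
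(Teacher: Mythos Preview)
Your proposal is correct and follows essentially the same route as the paper: reduce to $B$-invariants, identify the compact generators $\sM_w \mapsto M_{w\cdot\lambda}$ via Lemma \ref{l:intops}, and verify fully faithfulness by the standard-to-costandard Ext-vanishing, which the paper justifies exactly as you surmise (since $M_\lambda$ is simple it coincides with its contragredient dual $A_\lambda$, and the vanishing then follows from cofreeness of $A_\lambda$ over $N$). The only cosmetic difference is that the paper explicitly produces a right adjoint $\gamma^R$ and checks the unit $\sM_e \to \gamma^R\gamma(\sM_e)$ is an isomorphism, whereas you compare Hom-complexes directly; these are equivalent formulations.
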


\begin{proof} As both sides of \eqref{e:pindreps} are generated by their Borel 
invariants (cf. Corollary \ref{c:pindgben} for the left hand side), it is enough 
to show the obtained $D(B \backslash G / B)$-equivariant functor 
\[
  \gamma: \on{pind}_L^G(\fl\mod_{\lambda})^B \rightarrow \fg\mod_{\lambda}^B
\] 
is an equivalence. 

We first claim it admits an equivariant right adjoint $\gamma^R$. Indeed, note 
that for any $w \in W/W_P$ we have 
\begin{equation}\label{e:mgoestom}
    \gamma( \sM_w) \simeq \gamma( j_{w, !} \star \sM_e) \simeq j_{w, !} \star 
    \gamma(\sM_e) \simeq j_{w, !} \star M_\lambda 
    \overset{\eqref{e:stb1}}{\simeq} M_{w \cdot \lambda}. 
\end{equation}
In particular, $\gamma$ sends compact objects to compact objects, and hence 
admits a continuous right adjoint, which is equivariant by the semi-rigidity of 
$D(B \backslash G / B)$. 

We next check that $\gamma$ is fully faithful. As $\sM_e$ generates 
\eqref{e:pindlbinv} as a $D(B \backslash G / B)$-module, it is enough to see 
the canonical map 
\begin{equation} \label{e:unitcheck}
     \sM_e \rightarrow \gamma^R \circ \gamma(\sM_e)
\end{equation}
is an equivalence. Indeed, for any $w \in W/W_P$, we unwind 
\begin{align}
    \Hom( \sM_w, \gamma^R \circ \gamma (\sM_e))  &\simeq \Hom( \gamma(\sM_w), 
    \gamma(\sM_e)) \overset{\eqref{e:mgoestom}}{\simeq} \Hom( M_{w \cdot 
    \lambda}, M_{\lambda}). 
\intertext{Recalling that $M_\lambda$ is simple, and in particular coincides 
with its 
contragredient dual $A_\lambda$, we may continue}  \label{e:vanMA}
   & \simeq \on{Hom}(M_{w \cdot \lambda}, A_\lambda) \simeq \begin{cases} k & 
    \text{if }  w = e \\ 0 & \text{otherwise;} \end{cases}
\end{align}
where the last vanishing is a (standard) consequence of the cofreeness of 
$A_\lambda$ over $N$. Comparing with \eqref{e:vanwhit}, it follows that 
\eqref{e:unitcheck} is an equivalence, as desired. 

It remains to see that $\gamma$ is essentially surjective. However, as the 
$M_{w \cdot \lambda}$ compactly generate the target, cf. Proposition 
\ref{p:blockofO}, we are done by \eqref{e:mgoestom}. 
\end{proof}

\subsubsection{} Concatenating Theorem \ref{t:gmodparind} and Proposition 
\ref{p:3realz}, we have shown the following. 

\begin{theo} \label{t:3realz} There are canonical $D(G)$-equivariant 
equivalences 
\begin{equation} \label{e:locequivs}
\fg\mod_{\lambda} \simeq D(G/N, \psi, \St) \simeq D(G/N_P)^{L, w, -\lambda - 
2 \rho_L}.
\end{equation}
\end{theo}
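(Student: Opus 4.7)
The plan is immediate: this is a direct concatenation of Theorem~\ref{t:gmodparind} and Proposition~\ref{p:3realz}. First I would verify that the setups of the two inputs align with the hypothesis at hand. Since $M_\lambda$ is irreducible, the discussion in Section~\ref{ss:parindst} shows that $\lambda$ is antidominant, its dot-stabilizer $W_P \subset W$ is a parabolic subgroup, and one takes $P$ and $L$ to be the associated parabolic and Levi; by construction $\lambda$ then becomes maximally singular for $L$, which is precisely the regime in which Proposition~\ref{p:3realz} was established.

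Next, I would invoke Proposition~\ref{p:3realz} to produce the $D(G)$-equivariant chain
\[
\on{pind}_L^G(\fl\mod_\lambda) \simeq D(G/N, \psi, \St) \simeq D(G/N_P)^{L, w, -\lambda - 2 \rho_L},
\]
and then splice on the left with the $D(G)$-equivariant equivalence $\on{pind}_L^G(\fl\mod_\lambda) \simeq \fg\mod_\lambda$ from Theorem~\ref{t:gmodparind}. Since all intermediate functors carry strict $D(G)$-equivariance data (cf.\ the general observation in Section~\ref{ss:catreps}), the resulting composite identifications \eqref{e:locequivs} are automatically $D(G)$-equivariant.

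There is no substantive obstacle here; the real work has already been carried out, and I would expect the write-up to be a few lines at most. For orientation: Proposition~\ref{p:3realz} rests on the maximally singular localization Theorem~\ref{t:singloc} (applied on the Levi $L$) combined with the compatibility of parabolic induction and Borel generation supplied by Corollary~\ref{c:pindgben}, while Theorem~\ref{t:gmodparind} reduces to a check on $B$-invariant compact generators via Corollary~\ref{c:checkonhwvs}, using the irreducibility of $M_\lambda$ to establish the essential vanishing in \eqref{e:vanMA}. The genuinely new ingredient was the identification in Theorem~\ref{t:gmodparind} of $\fg\mod_\lambda$ with the parabolic induction of a maximally singular block; once that is in hand, the present theorem is a formal consequence.
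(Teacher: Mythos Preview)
Your proposal is correct and matches the paper's own argument exactly: the theorem is stated as the concatenation of Theorem~\ref{t:gmodparind} and Proposition~\ref{p:3realz}, with no further proof given. Your additional remarks about setup alignment and the provenance of the two inputs are accurate but go beyond what the paper itself writes out.
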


\subsubsection{} Let us explain how to deduce localization theorems at positive 
level as well.

\begin{cor} \label{c:3realz} There are canonical $D(G)$-equivariant 
equivalences 
	\[
	     \fg\mod_{-\lambda - 2 \rho} \simeq D(G/N, -\psi, \St) \simeq 
	     D(G/N_P)^{L, w, \lambda}. 
	\]
\end{cor}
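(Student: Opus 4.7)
The plan is to deduce the corollary from Theorem \ref{t:3realz} by applying categorical duality. By Proposition \ref{p:gmodduals}, there is a canonical $D(G)$-equivariant equivalence $\fg\mod_{-\lambda - 2\rho} \simeq (\fg\mod_{\lambda})^\vee$. Using Theorem \ref{t:3realz} to identify the right-hand side, the corollary reduces to establishing the $D(G)$-equivariant identifications
\[
  (D(G/N, \psi, \St))^\vee \simeq D(G/N, -\psi, \St), \qquad (D(G/N_P)^{L, w, -\lambda - 2\rho_L})^\vee \simeq D(G/N_P)^{L, w, \lambda}.
\]

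For the Whittaker identification, I would first invoke Remark \ref{r:whitbinvs} to rewrite $D(G/N, \psi, \St) \simeq D(G/N, \psi)^{B\hspace{-.6mm} \on{-gen}}$, and similarly for $-\psi$. By Corollary \ref{c:dualskgen}, passage to the $B$-generated submodule commutes with duality, so the problem reduces to the identification $D(G/N, \psi)^\vee \simeq D(G/N, -\psi)$. This in turn follows from the self-duality of $D(G)$ combined with the observation that under Verdier duality the exponential character sheaf on $N$ is sent to its inverse, interchanging $(N, \psi)$- and $(N, -\psi)$-invariants.

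For the weak-invariants identification, the key input is the argument establishing equation \eqref{e:dualsrhoshiftt} in the proof of Proposition \ref{p:rhoshift}, applied now to the Levi $L$: Verdier self-duality on $D(L)^{L, w}$ restricts to an equivalence $(D(L)^{L, w, \mu})^\vee \simeq D(L)^{L, w, -\mu - 2\rho_L}$, which for $\mu = -\lambda - 2\rho_L$ yields the desired shift. Both sides of the target equivalence arise from the corresponding $D(L)$-modules by parabolic induction, via Proposition \ref{p:3realz} and Theorem \ref{t:gmodparind}, and since parabolic induction for categorical representations is $\DGCat$-linear and commutes with duality, the Levi-level identification propagates to the $D(G/N_P)$ realization. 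The main delicacy is the bookkeeping between the $\rho_L$-shift natural to the Levi and the $\rho$-shift natural to the ambient group; this is forced to be consistent by Proposition \ref{p:gmodduals} on the representation-theoretic side, so no separate verification is required once the duality compatibilities for $\on{pind}_L^G$ are in hand.
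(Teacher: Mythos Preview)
Your proposal is correct and follows essentially the same approach as the paper: both proofs invoke Proposition \ref{p:gmodduals} to identify $\fg\mod_{-\lambda-2\rho}$ with $(\fg\mod_\lambda)^\vee$ and then dualize the equivalences of Theorem \ref{t:3realz}. The paper's proof is a one-line ``dualize \eqref{e:locequivs}'', whereas you have carefully spelled out what the dual of each geometric realization is and why; your added details are all sound and consistent with the paper's framework.
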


\begin{proof} Recall that in Proposition \ref{p:gmodduals} we saw that 
	\[
	    \fg\mod_{\lambda}^\vee \simeq \fg\mod_{-\lambda - 2 \rho}.
	\]
	The corollary therefore follows by dualizing the categories appearing in 
	\eqref{e:locequivs}. 
\end{proof}

\begin{re} \label{r:longint} Consider the action of $T$ (supplemented with the 
loop rotation 
$\mathbb{G}_m$ in affine type, i.e. the Levi quotient of the automorphisms of a 
formal disk) on the additive characters of $N$. As $\psi$ and $-\psi$ lie in 
the same orbit, one may choose an identification
	\[
	     D(G/N, \psi, \St) \cong D(G/N, -\psi, \St).
	\]
The resulting equivalence $\fg\mod_\lambda \cong \fg\mod_{-\lambda - 2 \rho}$ 
behaves as a `long intertwining operator'. 
\end{re}

\section{The linkage principle for positive energy representations}
\label{s:linky}

The goal of this section is to prove a linkage principle for $\fg\mod^{B\hspace{-.6mm} \on{-gen}}$ and specialize it to a linkage principle for positive energy representations proposed by Yakimov in \cite{yakimov}. 

The proofs of this section do not rely on localization theory, but do use the definitions and results of Sections \ref{s:hckinv}, \ref{s:catliereps}, and \ref{s:pind}. The only material from this section used in its sequels is Theorem \ref{t:linky}, which will be applied to obtain the block decomposition for affine Harish-Chandra bimodules. So, the reader may wish to examine Theorem \ref{t:linky} and then proceed directly to Section \ref{s:someapps}.

\subsection{Positive energy representations}

\subsubsection{} 
Let $H$ be an almost simple, simply connected algebraic group, $H[\hspace{-.2mm}[z]\hspace{-.2mm}]$ its arc group, and $H(\!(z)\!)$ 
its loop group. Passing to their Lie algebras, let us write $\fh$ for the 
corresponding semisimple Lie algebra, $\fh[\hspace{-.2mm}[z]\hspace{-.2mm}]$ for the corresponding arc 
algebra, and $\hk$ for the corresponding affine Lie algebra 
at a fixed noncritical integral level. Recall that $\Lambda_f$ denotes the character lattice of the abstract Cartan of $H$. 

\subsubsection{} Associated to this data is the following category of {\em 
positive energy representations}. To introduce it, write $K$ for the first 
congruence 
subgroup of $H[\hspace{-.2mm}[z]\hspace{-.2mm}]$, i.e. its prounipotent radical, and consider the abelian category $$\hk\mod^{K, \heartsuit}$$of smooth $\hk$-modules on which the action of the Lie algebra of $K$ is integrable, i.e. locally nilpotent. Within it, consider the cocomplete Serre subcategory 
\[
       \per \subset \hk\mod^{K, \heartsuit}
\]
generated by parabolic inductions of $\fh$-modules with integral infinitesimal characters. Explicitly, if we write $Z(\fh)$ for the center of $U(\fh)$, $\per$ may be described as follows. 

\begin{lem} For a module $M$ in $\hk\mod^{K, \heartsuit}$,  the following are equivalent.

\begin{enumerate}
    \item The object $M$ lies in $\per$.
    
        \item For any subquotient $S$ of $M$, $Z(\fh)$ acts on $S^K$ locally finitely with integral eigenvalues. 

    \item For any splitting of $\fh$ into $\fh[\hspace{-.2mm}[z]\hspace{-.2mm}]$, $Z(\fh)$ acts on $M$ locally finitely with integral eigenvalues.

\end{enumerate}

\end{lem}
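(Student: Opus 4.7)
The plan is to use the canonical $K$-invariant filtration of $M$ to relate the three conditions, with the main work concentrated in showing (2) implies (1) via a transfinite devissage. For a smooth $M$, I will work with the filtration $M_n := \{m \in M : (z\fh[\hspace{-.2mm}[z]\hspace{-.2mm}])^n m = 0\}$, which exhausts $M$ by $K$-integrability and is preserved by $\fh$ through any splitting since $[\fh, z\fh[\hspace{-.2mm}[z]\hspace{-.2mm}]] \subset z\fh[\hspace{-.2mm}[z]\hspace{-.2mm}]$. The key observation is that each subquotient $M_n/M_{n-1}$, being annihilated by $z\fh[\hspace{-.2mm}[z]\hspace{-.2mm}]$, is canonically an $\fh$-module via the projection $\fh[\hspace{-.2mm}[z]\hspace{-.2mm}] \twoheadrightarrow \fh$; moreover the action of $\fh$ on $(M/M_{n-1})^K \simeq M_n/M_{n-1}$ agrees with that coming from any splitting, so questions about $Z(\fh)$-eigenvalues on such subquotients are splitting-independent.

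The equivalence of (2) and (3) is then essentially formal. The direction (3) $\Rightarrow$ (2) is immediate since the property passes to subquotients and restricts correctly to the canonically identified $K$-invariants. For the converse, I would apply (2) to the $\fh$-modules $M_n/M_{n-1}$ and build up to each $M_n$ and hence to $M = \bigcup M_n$, using that local finiteness of $Z(\fh)$ with integral eigenvalues is stable under extensions and directed colimits. For (1) $\Rightarrow$ (2), the closure of (2) under direct sums, subquotients, and extensions reduces to the case of a parabolic induction $\on{pind}_{\fh[\hspace{-.2mm}[z]\hspace{-.2mm}]}^{\hk}(V)$ with $V$ of integral infinitesimal character; by PBW this is isomorphic as an $\fh[\hspace{-.2mm}[z]\hspace{-.2mm}]$-module to $U(z^{-1}\fh[z^{-1}]) \otimes V$, and through any splitting $U(z^{-1}\fh[z^{-1}])$ decomposes as a direct sum of finite dimensional $\fh$-modules, so a standard translation-functor calculation yields the claim.

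The main content is (2) $\Rightarrow$ (1), which I would establish by transfinite induction. Define $\hk$-submodules $N_0 := 0$, let $N_{i+1}$ be the preimage in $M$ of the $\hk$-submodule of $M/N_i$ generated by $(M/N_i)^K$, and $N_\lambda := \bigcup_{i < \lambda} N_i$ at limit ordinals. Since $K$ is prounipotent, any nonzero smooth $K$-integrable module has nonzero $K$-invariants, so the chain stabilizes only when $M/N_i = 0$; hence $M = N_\alpha$ for some ordinal $\alpha$. By construction each subquotient $N_{i+1}/N_i$ is generated as a $\hk$-module by its $K$-invariants, and so is a quotient of $\on{pind}_{\fh[\hspace{-.2mm}[z]\hspace{-.2mm}]}^{\hk}((N_{i+1}/N_i)^K)$; by (2) the inducing $\fh$-module has integral (generalized) infinitesimal character, so this parabolic induction, and hence $N_{i+1}/N_i$, lies in $\per$. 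Closure of $\per$ under colimits and extensions then yields $M \in \per$. The hard step will be verifying that the transfinite chain truly terminates with $N_\alpha = M$ and that the canonical $\fh$-modules $(N_{i+1}/N_i)^K$ really have integral infinitesimal characters in the sense required by the definition of $\per$, both of which ultimately hinge on the prounipotence of $K$ and the splitting-independent nature of the $\fh$-action on $K$-invariants.
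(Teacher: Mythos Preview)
Your proposal is correct and follows essentially the same strategy as the paper. For the key implication (2) $\Rightarrow$ (1), both you and the paper use the same filtration: $N_{i+1}$ is the $\hk$-submodule generated by vectors which are $K$-invariant modulo $N_i$, and each successive quotient is a quotient of a parabolic induction from an $\fh$-module with integral generalized infinitesimal character.

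The one simplification the paper makes, which you might adopt, is that transfinite induction is unnecessary: the filtration already exhausts $M$ after $\omega$ steps. Indeed, by $K$-integrability any $m \in M$ lies in a finite-dimensional $K$-submodule $V$, and by prounipotence of $K$ there is a flag $0 = V_0 \subset \cdots \subset V_n = V$ with trivial $K$-action on each $V_j/V_{j-1}$; an easy induction then gives $V_j \subset N_j$, so $m \in N_n$. This replaces your cardinality-based termination argument with a direct element-by-element check. The paper also dismisses (1) $\Rightarrow$ (3) and (3) $\Rightarrow$ (2) as straightforward, whereas you spell them out via the congruence filtration and a PBW calculation; your version is more explicit but not materially different.
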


\begin{proof} It is straightforward to see that (1) implies (3), and (3) implies (2). To see that (2) implies (1), consider the chain of submodules
\begin{equation} \label{e:filt}
   0 = M^0 \subset M^1 \subset M^2 \subset \cdots \subset M 
\end{equation}
where for $i \geqslant 1$,  $M^i$ is defined inductively as the submodule of $M$ generated by vectors which are $K$-invariant modulo $M^{i-1}$. By assumption (2), each successive quotient $M^i/M^{i-1}$ lies in $\on{Pos}_\kappa$, and hence so does each $M^i$. It is therefore enough to show that the filtration is exhaustive, i.e. 
$
  \varinjlim_i M^i \simeq M. 
$
To see this, note that by the $K$-integrability of $M$, any element of $M$ lies in a finite dimensional $K$-submodule, and by the prounipotence of $K$, the latter is a finite successive extension of the trivial module. 
\end{proof}    
    
\begin{re}
   Note that the Sugawara energy operator $L_0$ acts locally finitely on objects of $\per$, with spectrum locally bounded from below, cf. the proof of Proposition \ref{p:poseng} below for a precise formulation. For $\fh \simeq \fsl_2$, and after relaxing the integrality assumption on the infinitesimal characters, this also characterizes $\per$. 
\end{re}

\subsubsection{} Let us write $W_f$ and $W$ for the  finite and 
affine Weyl groups of $H$, and consider the level $\kappa$ dot action of the latter on $\Lambda_f$. For an orbit of weights $\chi \in  W \backslash 
\Lambda_f$, and a infinitesimal character $\chi_f$ of $\fh$, let us say that $\chi_f$ 
{\em lies in} $\chi$ if the corresponding finite Weyl group orbit is 
contained in $\chi$. For an orbit $\chi$, consider the Serre subcategory 
\[
      \Pos_\chi \subset \per 
\]
consisting of modules $\sM$ with the property that, for every subquotient $\sS$ 
of $\sM$, the generalized eigenvalues of $Z(\fh)$ on $\sS^K$ lie in $\chi$. 

 The following linkage principle was proposed by Yakimov and 
will be proven in this section.

\begin{theo} \label{t:linky} The direct sum of inclusions over $\chi \in W 
\backslash \Lambda_f$ 
yields an equivalence of cocomplete 
abelian categories 
	\[
	\underset{\chi}\oplus   \hspace{.5mm}   \Pos_\chi \xrightarrow{\sim} \per.
	\]
I.e., every object $\sM$ of $\per$ canonically decomposes as $\underset \chi 
\oplus \hspace{.5mm} 
\sM_\chi$, where $\sM_\chi$ is its maximal submodule lying in $\Pos_\chi$.  
\end{theo}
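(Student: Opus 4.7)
The plan is to first reduce to the case of finitely generated modules, then control such a module using the structure of parabolic Vermas together with the Kac--Kazhdan linkage for affine Verma modules.

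First, both $\per$ and its Serre subcategories $\Pos_\chi$ are cocomplete, and the inclusions commute with filtered colimits. Since every object of $\per$ is a filtered colimit of finitely generated subobjects, it suffices to prove that every finitely generated $M \in \per$ decomposes canonically as a finite direct sum $\oplus_\chi M_\chi$ with $M_\chi \in \Pos_\chi$. By the definition of $\per$ as the cocomplete Serre subcategory generated by parabolic inductions of $\fh$-modules with integral infinitesimal characters, such an $M$ is a subquotient of a finite direct sum of modules of the form $\Ind_{\fh[\hspace{-.2mm}[z]\hspace{-.2mm}]}^{\hk}(N)$, where $N$ is a finitely generated $\fh$-module with integral infinitesimal character. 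Decomposing each $N$ along its $Z(\fh)$-generalized eigenspaces, one may assume $N$ carries a single generalized infinitesimal character $\chi_f$, corresponding to some $W_f$-orbit in $\Lambda_f$. Let $\chi \in W\backslash \Lambda_f$ denote the unique affine Weyl group orbit containing this $W_f$-orbit.

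The heart of the argument is the claim that $\Ind_{\fh[\hspace{-.2mm}[z]\hspace{-.2mm}]}^{\hk}(N)$ belongs to $\Pos_\chi$. To establish this, I would first filter $N$ by $\fh$-Verma modules, using the finite composition series of $N$ inside the block of infinitesimal character $\chi_f$ in finite-type Category $\mathscr{O}$. Applying the (exact) parabolic induction functor, $\Ind(N)$ acquires a finite filtration whose successive subquotients are affine Verma modules $M_\lambda$ with highest weights $\lambda$ in $\chi$. The Kac--Kazhdan determinant formula for the Shapovalov form at noncritical integral level \cite{kk} then guarantees that any simple subquotient of such an $M_\lambda$ is a simple highest-weight module $L_\mu$ with $\mu \in W \cdot \lambda \subset \chi$. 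A direct inspection of $L_\mu^K$ (via the Sugawara $L_0$-grading, which is bounded below and has finite-dimensional graded pieces on each $L_\mu$) shows that its $Z(\fh)$-eigenvalues correspond to $W_f$-orbits contained in $\chi$, proving the claim.

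To finish, I would verify Hom-orthogonality: any morphism $f \colon M \to M'$ with $M \in \Pos_\chi$, $M' \in \Pos_{\chi'}$, and $\chi \neq \chi'$ has image $S$ which is simultaneously a subquotient of both, so $Z(\fh)$ acts on $S^K$ with generalized eigenvalues in $\chi \cap \chi' = \varnothing$; hence $S^K = 0$. Since $S$ is $K$-integrable, the standard filtration by annihilators of powers of $\on{Lie}(K)$ shows $S^K = 0$ forces $S = 0$. Combined with the previous paragraph, any finitely generated $M \in \per$ embeds as a subquotient of a module lying in $\oplus_\chi \Pos_\chi$, and the orthogonality ensures that the subquotient decomposes along the same direct sum, yielding the canonical block decomposition. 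The main obstacle is the middle paragraph: making precise the reduction from subquotients of $\Ind(N)$, for $N$ not necessarily highest weight, to the Kac--Kazhdan input, and verifying that the control on affine singular vectors persists through arbitrary subquotients $S$ and not merely through the composition factors of $\Ind(N)$ itself.
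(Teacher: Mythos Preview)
There is a genuine gap in your middle paragraph. You propose to filter $N$ by $\fh$-Verma modules, but $N$ is an arbitrary finitely generated $\fh$-module with a fixed integral generalized infinitesimal character $\chi_f$, not an object of Category $\mathscr{O}$. Such a module need not admit a Verma filtration: already $U(\fh) \otimes_{Z(\fh)} k_{\chi_f}$, or any principal series module, gives a counterexample. Without this step you cannot reduce $\Ind(N)$ to a filtration by affine Verma modules, and hence the Kac--Kazhdan input does not apply as written. The ``main obstacle'' you flag at the end is also real: even granting control of the simple composition factors of $\Ind(N)$, passing from those to the $Z(\fh)$-spectrum on $S^K$ for an arbitrary subquotient $S$ requires an argument, since $(-)^K$ is only left exact and $S$ need not have finite length.

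The paper circumvents both issues by working categorically. It first proves a linkage principle for the full derived category $\hk\mod^{B\hspace{-.6mm}\on{-gen}}$ (Proposition~\ref{p:linkp}, where Kac--Kazhdan enters), then shows $\per$ lands inside this category (Proposition~\ref{p:poseng}) via an energy estimate together with the finite-type identification $\fh\mod_\lambda \simeq \fh\mod_{\chi_f}$ (Proposition~\ref{p:bgencc}, which uses Beilinson--Bernstein). This already gives the decomposition $\sM \simeq \oplus_\chi \sM_\chi$ with $\sM_\chi \in \hk\mod_\chi$. To see $\sM_\chi \in \Pos_\chi$, rather than analyzing subquotients directly, the paper uses that parabolic restriction $\on{pres}_{\hk}^{\fh}$ is $D(H)$-equivariant and preserves $B$-generation (Corollary~\ref{c:pindgben}); the adjunction with $\on{pind}$ then forces $\on{pres}(\hk\mod_\chi)$ to land in $\oplus_{\chi_f \in \chi} \fh\mod_{\chi_f}$, which controls $S^K$ for every subquotient $S$ at once.
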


\begin{re} The 
statement of Theorem \ref{t:linky} and argument below can be directly adapted to cover 
arbitrary infinitesimal 
characters at any noncritical level. To do so, one works also with $(B,\lambda)$-equivariant objects and the corresponding twisted affine Hecke categories, where $\lambda$ varies over character sheaves pulled back from the abstract Cartan; see for example \cite{ly} or \cite{qfle} for further discussion of such Hecke categories. \end{re}

\subsubsection{} This section is structured as follows. We will first prove a 
linkage principle for all of $\fg\mod^{B\hspace{-.6mm} \on{-gen}}$. We will then, in finite type, 
identify the latter category with $\fg$-modules with integral generalized 
infinitesimal characters. Finally, we will combine these two steps to deduce  
Theorem \ref{t:linky}. 
	
\subsection{The linkage principle}

\subsubsection{} In this subsection, $G$ will be as in Section \ref{ss:grpslies}. Recall that to any  Weyl group orbit $\chi \in W \backslash \Lambda$ 
we associated a full subcategory 
\[
\fg\mod_{\chi} \subset \fg\mod^{B\hspace{-.6mm} \on{-gen}},
\]
cf. Proposition \ref{p:blockofO}. Letting $\chi$ vary over the set of all orbits, the direct sum of the 
inclusion maps yields a tautological 
$D(G)$-equivariant functor 
\begin{equation} \label{e:linkp}
\underset{ \chi} \oplus  \fg\mod_{\chi} 
\rightarrow 
\fg\mod^{B\hspace{-.6mm} \on{-gen}}.
\end{equation}
We will now deduce a linkage principle from Kac--Kazhdan's calculation of the 
Shapovalov determinant. 

\begin{pro} \label{p:linkp} The map \eqref{e:linkp} is an equivalence. 
\end{pro}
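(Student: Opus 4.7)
The plan is to reduce the statement to a question about $B$-invariants using the formalism of Section \ref{s:hckinv}, and then to invoke the Shapovalov determinant calculation of Kac--Kazhdan as the one nontrivial input. More precisely, each $\fg\mod_\chi$ is, by its definition and Lemma \ref{l:gencomps}, generated as a $D(G)$-module by the $B$-equivariant object $M_\lambda$ (for any $\lambda \in \chi$), hence is $B$-generated; so the direct sum on the left-hand side of \eqref{e:linkp} is $B$-generated as well. By Corollary \ref{c:checkonhwvs}, fully faithfulness of \eqref{e:linkp} will follow from fully faithfulness of the induced functor on $B$-invariants
\[
    F^B: \underset{\chi}{\oplus} \hspace{.5mm} \fg\mod_{\chi}^B \longrightarrow \fg\mod^B.
\]

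For essential surjectivity of \eqref{e:linkp}, note that the right-hand side $\fg\mod^{B\hspace{-.6mm}\on{-gen}}$ is $B$-generated by definition, and that $\fg\mod^B$ is compactly generated by the Verma modules $M_\lambda$ for $\lambda \in \Lambda$ (cf. Sections \ref{ss:liealgreps} and \ref{sss:infdimliealgreps}), each of which lies in the essential image of $F^B$ by construction. Hence once $F^B$ is fully faithful, both sides of \eqref{e:linkp} will identify with $D(G/B) \underset{D(B\backslash G/B)}{\otimes} \fg\mod^B$ by Theorem \ref{t:cuff}, yielding essential surjectivity of \eqref{e:linkp} formally.

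It remains to check that $F^B$ is fully faithful, i.e. that for distinct orbits $\chi_1 \neq \chi_2$ the full subcategories $\fg\mod_{\chi_1}^B$ and $\fg\mod_{\chi_2}^B$ of $\fg\mod^B$ are mutually left and right orthogonal. By Proposition \ref{p:blockofO}, these are compactly generated by the Vermas $M_{w \cdot \lambda_i}$ for $w \in W$, so orthogonality reduces to the vanishing
\[
   \RHom_{\fg\mod^B}(M_{\mu_1},  M_{\mu_2}) = 0 \quad \text{whenever } [\mu_1] \neq [\mu_2] \text{ in } W\backslash \Lambda.
\]
In finite type this is the classical linkage principle of Bernstein--Gelfand--Gelfand; in affine type, at each noncritical integral level, it follows from Kac--Kazhdan's calculation of the Shapovalov determinant \cite{kk}, which gives the block decomposition of the abelian category $\hk\mod^{B, \heartsuit}$ into a product indexed by dot orbits of the affine Weyl group on $\Lambda_f$. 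Summing over noncritical integral levels assembles these into the block decomposition of $\fg\mod^{B, \heartsuit}$ indexed by $W \backslash \Lambda$, and the passage from the abelian to the derived level is standard (the abelian block decomposition propagates to vanishing of all higher $\Ext$'s between objects in distinct blocks, hence to the dg-enhancement).

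The sole substantive ingredient is the Kac--Kazhdan input in the final paragraph; everything else is a formal application of Section \ref{s:hckinv}. The main subtlety to watch is that we must use the derived, not merely abelian, form of the linkage, but this is immediate once one has the abelian block decomposition for the heart, since the compact generators of each $\fg\mod_\chi^B$ are Vermas and their inter-block Yoneda $\Ext$'s vanish.
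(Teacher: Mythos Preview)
Your proposal is correct and follows the same overall strategy as the paper---reduce to $B$-invariants via Section \ref{s:hckinv}, then establish orthogonality there using Kac--Kazhdan---but the final step differs in a meaningful way. You invoke the full abelian block decomposition of Deodhar--Gabber--Kac \cite{dgk} as a black box and then pass from abelian to derived; the paper instead gives a direct derived-level argument that \emph{re-proves} DGK using only the simplicity of the antidominant Verma (the Kac--Kazhdan input), the Hecke action to reduce arbitrary Vermas to that case, and the identification of a simple Verma with its contragredient dual $A_\nu$ to get the Hom vanishing \eqref{e:vanMA}. The paper then handles positive level separately by the duality of Proposition \ref{p:gmodduals}. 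Your route is shorter and perfectly legitimate; the paper's route is more self-contained and showcases the Hecke-category techniques. One small point of care in your version: what you attribute to \cite{kk} is really the combination of Kac--Kazhdan (Shapovalov determinant) with \cite{dgk} (the actual block decomposition), and since $\fg\mod^B$ is renormalized in the affine case you should note that the passage from abelian to derived goes through because Homs between the compact generators (Vermas) are computed in the ordinary bounded derived category of the Harish-Chandra pair.
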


\begin{proof} We claim it is enough to show the obtained map 
	\begin{equation} \label{e:linkpbinv}
	\underset{\chi}\oplus \fg\mod_{\chi}^B \rightarrow \fg\mod^B
	\end{equation}
	is an equivalence.\footnote{This assertion is a theorem due to Deodhar--Gabber--Kac \cite{dgk}; the ensuing should be understood as an alternative proof of their result.} Indeed, if so we have 
	\begin{align*}
	\fg\mod^{B\hspace{-.6mm} \on{-gen}} & \simeq D(G/B) \underset{D(B \backslash G / B)} 
	\otimes  
	\fg\mod^B \\ & \simeq D(G/B) \underset{D(B \backslash G / B)} \otimes  
	\underset{\chi}\oplus \hspace{1mm}\fg\mod_{\chi}^B \\ & \simeq 
	\underset{\chi}\oplus \hspace{1mm}
	\fg\mod_{\chi}. 
	\end{align*}

	Let us show \eqref{e:linkpbinv}, i.e. that for distinct $\phi, \chi$ and 
	modules $\sM \in \fg\mod_{\phi}^B$ and $\sN \in \fg\mod_{\chi}^B$, one has 
	\begin{equation} \label{e:vanblocks}
	\on{Hom}_{\fg\mod^B}(\sM, \sN) \simeq 0. 
	\end{equation}
	Let us first assume that $G$ is either of finite type or $G$ is affine and 
	we 
	are at a negative level. In these cases, by a theorem \cite{kk} of Kac--Kazhdan on 
	singular vectors in Verma modules (due to Shapovalov in finite type), there 
	is 
	a (unique) antidominant Verma module $M_\nu$ in $\fg\mod_{\chi}$.  
	
	As $\fg\mod_{\phi}$ is compactly generated, by Proposition \ref{p:blockofO} 
	we may assume that $\sN$ is a Verma module. As we may then write $\sN 
	\simeq 
	j_{w, !} \star M_\nu$ for some $w \in W$, by 
	\[
	\on{Hom}_{\fg\mod^B}(\sM, j_{w, !} \star M_\nu) \simeq 
	\on{Hom}_{\fg\mod^B}( 
	j_{w^{-1}, *} \star \sM, M_\nu),
	\]
	we may assume that $\sN \simeq M_\nu$. It suffices to  to take 
	$\sM$ to 
	be a Verma module  $M_\mu$, in which case 
	\[
	\on{Hom}_{\fg\mod^B}(M_\mu, M_\nu) \simeq \on{Hom}_{\fg\mod^B}(M_\mu, 
	A_\nu) 
	\simeq 0
	\]
	by \eqref{e:vanMA} and the assumption that $\phi$ and $\chi$ are distinct. 
	
	Finally, the case of $G$ affine and positive level follows formally from 
	the 
	case of negative level by duality, cf. Proposition \ref{p:gmodduals}. 
	\end{proof}

\subsection{The relation with infinitesimal characters}

\subsubsection{} Let $G$ be a connected reductive group. Recall that we 
defined, for a character 
$\lambda$ of its abstract Cartan, the category 
\[
\fg\mod_{\lambda} \subset \fg\mod
\]
as the $D(G)$-submodule generated by the Verma module $M_\lambda$. 
We now compare this with the traditional definition via infinitesimal characters.

\begin{re} Of course, the hypothesis that $G$ is reductive appears since in 
	the affine case there is not such a definition via infinitesimal characters, due 
	to the triviality of the center of the enveloping algebra. 	
\end{re}

\subsubsection{} Let us write $\chi$ for the character by which the center 
$Z(\fg)$ of 
$U(\fg)$ acts on $M_\lambda$. Consider the full subcategory 
\begin{equation}
\label{e:gencc1}
\fg\mod_{\chi} \subset \fg\mod
\end{equation}
consisting of objects on whose cohomology groups $Z(\fg)$ acts with generalized 
infinitesimal character $\chi$.

\subsubsection{}  For the convenience of the reader, we now collect 
some alternative presentations of this category.

The first alternative is via abelian categories. For a category $\sC$ with a 
$t$-structure, we denote by $\sC^\heartsuit$ 
the abelian category given by the heart of the $t$-structure. Consider the full 
subcategory
\[
\fg\mod^{\heartsuit}_\chi \subset \fg\mod^\heartsuit    
\]
consisting of objects with generalized infinitesimal character $\chi$. If we write 
$D(\sC^\heartsuit)$ for the corresponding dg-enhanced derived category of 
$\sC^\heartsuit$, we have a canonical functor 
\begin{equation}\label{e:gencc2}
D(\fg\mod_\chi^\heartsuit) \rightarrow D(\fg\mod^\heartsuit) \simeq 
\fg\mod. 
\end{equation}

The second alternative is via base change of categories. The centrality of 
$Z(\fg)$ in $U(\fg)$ gives rise to a tautological action of $Z(\fg)\mod$ on 
$\fg\mod$. Concretely, the associated action map  
\[
     Z(\fg)\mod \otimes \hspace{.5mm} \fg\mod \rightarrow \fg\mod.
\]
sends a $Z(\fg)$-module $\sM$ and a $\fg$-module $\sN$ to the tensor product 
$
      \sM \underset{Z(\fg)}{\otimes} \sN.
$ 

Let us write $Z(\fg)\mod_{\chi}$ for the full subcategory of 
$Z(\fg)\mod$ consisting of objects supported on the formal neighborhood of 
$\chi$, i.e. on which the maximal ideal $\mathfrak{m}_\chi$ acts locally nilpotently on their cohomology. There is a tautological $Z(\fg)\mod$ equivariant adjunction 
\begin{equation} \label{e:loccoh}
      i_*:  Z(\fg)\mod_{\chi} \rightleftarrows   Z(\fg)\mod: i^!_\chi,
\end{equation}
wherein $i_*$ is fully faithful. In particular, tensoring up with $\fg\mod$, we 
obtain a fully faithful embedding 
\begin{equation}
 \label{e:gencc3}    \fg\mod \underset{Z(\fg)\mod}\otimes 
 Z(\fg)\mod_{\chi} \subset \fg\mod.       
\end{equation}

All three categories coincide due to the flatness of $U(\fg)$ over $Z(\fg)$, 
as we spell out in the following. 

\begin{pro} The functors \eqref{e:gencc1}, \eqref{e:gencc2}, and 
\eqref{e:gencc3} induce equivalences 
\begin{equation} \label{e:gencc3real}
      \fg\mod_{\chi} \simeq D(\fg\mod_{\chi}^\heartsuit) \simeq  \fg\mod 
      \underset{Z(\fg)\mod}\otimes 
      Z(\fg)\mod_{\chi}.
\end{equation}
\end{pro}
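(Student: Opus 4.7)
The strategy is to prove the three equivalences by identifying all three categories with a common fourth: the full subcategory $\scc_\chi \subset \fg\mod$ consisting of complexes whose cohomology has generalized infinitesimal character $\chi$.

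First, I would identify $\fg\mod \underset{Z(\fg)\mod}\otimes Z(\fg)\mod_{\chi}$ with $\scc_\chi$. The adjunction $i_*: Z(\fg)\mod_\chi \rightleftarrows Z(\fg)\mod: i^!_\chi$ has fully faithful left adjoint, with essential image the modules supported on the formal neighborhood of $\chi$. Tensoring with $\fg\mod$ over $Z(\fg)\mod$ preserves this fully faithfulness by the freeness of $U(\fg)$ over $Z(\fg)$ (Kostant), and cuts out precisely the objects of $\fg\mod$ whose cohomology is locally annihilated by a power of the maximal ideal $\mathfrak{m}_\chi$, namely $\scc_\chi$. The same flatness makes the projector to $\scc_\chi$ $t$-exact with heart $\fg\mod_\chi^\heartsuit$; combining this with the convention that $\fg\mod$ is the canonical dg-enhancement of the unbounded derived category of $\fg\mod^\heartsuit$, together with a standard compact-generation argument, identifies $\scc_\chi$ with $D(\fg\mod_\chi^\heartsuit)$.

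The crucial step is to match $\scc_\chi$ with $\fg\mod_\chi$, defined as the $D(G)$-submodule of $\fg\mod$ generated by $M_\lambda$. The inclusion $\fg\mod_\chi \subset \scc_\chi$ is immediate, since $M_\lambda$ has infinitesimal character $\chi$ and the $D(G)$-action commutes with the central action of $Z(\fg)$. For the reverse inclusion I would compare both sides at the $B$-equivariant level: by Proposition~\ref{p:blockofO}, $\fg\mod_\chi^B$ is compactly generated by the Vermas $M_{w\cdot\lambda}$ for $w\in W$, and by standard Bernstein--Gelfand--Gelfand theory the same Vermas compactly generate $\scc_\chi^B$, the corresponding block of derived Category $\mathscr{O}$. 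Since $\fg\mod_\chi$ is generated by its $B$-invariants by construction, Corollary~\ref{c:rad} reduces the reverse inclusion to the assertion that $\scc_\chi$ is likewise generated by $B$-invariants in the sense of Definition~\ref{d:genkinvs}.

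This last assertion is the main obstacle. The plan to address it is to exhibit a convenient compact generator of $\scc_\chi$, such as the truncation $U(\fg) \otimes_{Z(\fg)} Z(\fg)/\mathfrak{m}_\chi^N$ for $N$ sufficiently large, and to realize it as a colimit of $D(G)$-convolutions of objects in $\fg\mod_\chi^B$. The Harish--Chandra decomposition $U(\fg)\simeq U(\fn^-)\otimes Z(\fg)\otimes U(\fn)$ as a $Z(\fg)$-module (a reformulation of Kostant's freeness theorem) lets one filter this generator by $\fn$-depth, identifying each successive subquotient with a parabolically induced object visibly lying in the $D(G)$-orbit of $\fg\mod_\chi^B$; closing up under the colimits by which $\fg\mod_\chi$ is generated then yields the desired containment.
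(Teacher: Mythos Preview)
You have misread the notation. In this proposition, $\fg\mod_\chi$ refers to \eqref{e:gencc1}: the full subcategory of $\fg\mod$ whose cohomology has generalized infinitesimal character $\chi$, which is exactly your $\scc_\chi$. The identification $\fg\mod_\chi \simeq \scc_\chi$ is therefore a tautology, not a ``crucial step''. The comparison with the $D(G)$-submodule generated by the Verma module (what the paper denotes $\fg\mod_\lambda$) is the content of the \emph{next} proposition, Proposition~\ref{p:bgencc}, not this one.

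Your first paragraph is thus essentially a correct, if compressed, sketch of what is actually needed here: use flatness of $U(\fg)$ over $Z(\fg)$ to see that the base change \eqref{e:gencc3} cuts out precisely the objects with cohomology supported at $\chi$, and then identify this with the derived category of the heart. The paper's argument is similar in spirit but more explicit about one point: it first verifies that $Z(\fg)\mod_\chi$ is the derived category of its heart (via injective envelopes and finite global dimension), and then deduces the analogous statement for $\fg$ by comparing with $Z(\fg)$ through a Cartesian square. Your ``standard compact-generation argument'' hides this work.

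As for the step you mistakenly attached here (really the content of Proposition~\ref{p:bgencc}): the paper's route is much shorter than yours. Rather than filtering $U(\fg)\otimes_{Z(\fg)} Z(\fg)/\mathfrak{m}_\chi^N$ by $\fn$-depth, it observes that the $D(G)$-equivariant twisted global sections functor $\Gamma^\lambda: D(G/B)\to \fg\mod_\lambda$ sends the sheaf of twisted differential operators to the central quotient $U(\fg)\otimes_{Z(\fg)} k_\chi$, by Beilinson--Bernstein; since this object compactly generates $\fg\mod_\chi$, the reverse inclusion follows at once. Your filtration approach might be salvageable, but the claimed decomposition $U(\fg)\simeq U(\fn^-)\otimes Z(\fg)\otimes U(\fn)$ is not literally correct and would have to be replaced by a careful use of Kostant's separation of variables.
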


\begin{proof} We begin by showing the map \eqref{e:gencc2} is a fully faithful 
embedding. This will be deduced from the analogous assertion for $Z(\fg)$ as 
follows. Consider the tautological commutative diagrams 
\[
\xymatrix{  \fg\mod_{\chi}^\heartsuit  \ar[d]_{\on{Oblv}} 
& \fg\mod^\heartsuit  \ar[l]_{i^!} \ar[d]^{\on{Oblv}}   & & &  
\fg\mod_{\chi}^\heartsuit  \ar[r]^{i_*} 
&\fg\mod^\heartsuit    \\ 
Z(\fg)\mod_{\chi}^\heartsuit &  
Z(\fg)\mod^\heartsuit \ar[l]_{i^!} & & & Z(\fg)\mod_{\chi}^\heartsuit 
\ar[u]^{\on{ind}} 
\ar[r]^{i_*} & Z(\fg)\mod^\heartsuit, \ar[u]_{\on{ind}} }
\]
where the left diagram is obtained from the right by passing to right adjoints.

We next note that $Z(\fg)\mod_\chi$ is the derived category of its heart. Indeed, recall that the injective envelope of the skyscraper sheaf $k_\chi$ supported at $\chi$ lies in $Z(\fg)\mod^\heartsuit_\chi$. As the category $Z(\fg)\mod_\chi^\heartsuit$ is locally Noetherian, and in particular arbitrary direct sums of injective objects are injective, it follows that the tautological map on bounded below categories
\begin{equation} \label{e:bb}
     D(Z(\fg)\mod^\heartsuit_\chi)^+ \rightarrow Z(\fg)\mod_\chi^+
\end{equation}
is fully faithful. It is straightforward to see that $Z(\fg)\mod_\chi$ is compactly generated by $k_\chi$. It therefore remains to see the same is true for $D(Z(\fg)\mod_\chi^\heartsuit)$. However, as $Z(\fg)\mod_\chi^\heartsuit$ has finite global dimension, we may identify its unbounded derived category with the homotopy category of injective complexes (and not merely its quotient by acyclic complexes), which makes the asserted compactness manifest.

Prolonging to the corresponding (dg-enhanced) derived categories, we obtain 
diagrams 
\begin{equation*}  
\xymatrix{  D(\fg\mod_{\chi}^\heartsuit)  \ar[d]_{\on{Oblv}} 
	& \fg\mod  \ar[l]_{Ri^!} \ar[d]^{\on{Oblv}}   & & &  
	D(\fg\mod_{\chi}^\heartsuit)  \ar[r]^{i_*} 
	&\fg\mod    \\ 
	Z(\fg)\mod_{\chi} &  
	Z(\fg)\mod \ar[l]_{Ri^!} & & & Z(\fg)\mod_{\chi} 
	\ar[u]^{\on{ind}} 
	\ar[r]^{i_*} & Z(\fg)\mod. \ar[u]_{\on{ind}} }
\end{equation*}
We claim these again commute. As the left diagram is again obtained from the 
right by passing to right adjoints, it is enough to argue for one diagram. 
However, on the right this is visible due to the $t$-exactness of the 
appearing functors, i.e. the flatness of $U(\fg)$ over $Z(\fg)$. 

     The fully faithfulness of \eqref{e:gencc2} now follows formally using the 
     analogous assertion for $Z(\fg)$ and the (tautological) conservativity of 
     the functor 
     \[  
              \on{Oblv}: D(\fg\mod_{\chi}^\heartsuit) \rightarrow 
              Z(\fg)\mod_{\chi}.  
     \]
Having shown the fully faithfulness of \eqref{e:gencc2}, it follows that all 
three categories $\scc$ appearing in \eqref{e:gencc3real} fit 
into the Cartesian square 
\[
 \xymatrix{ \scc \ar[r] \ar[d] & \fg\mod \ar[d]^{\on{Oblv}} \\ 
 Z(\fg)\mod_{\chi} \ar[r]^{i_*} & Z(\fg)\mod,}
\]
as desired. \end{proof}

%

	

	%
	%
	%
	%

	%

\subsubsection{} Having reviewed the definition of the category via infinitesimal 
characters, let us show that the two obtained categories coincide.

\begin{pro} \label{p:bgencc} The inclusions into $\fg\mod$ yield an equivalence 
	\[
	\fg\mod_{\lambda} \simeq \fg\mod_{\chi}.
	\]
\end{pro}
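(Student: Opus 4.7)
The plan is to prove the two inclusions separately, with the reverse inclusion being the substantive step and reduced to a comparison of Borel-invariant objects via the machinery of Section \ref{s:hckinv}.

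\textbf{Forward inclusion.} I would first check that $\fg\mod_\lambda \subseteq \fg\mod_\chi$. Since the $D(G)$-action on $\fg\mod$ of \eqref{e:convgmod} is given by tensor product over $U(\fg)$ and $Z(\fg) \subset U(\fg)$ is central, the action of $D(G)$ commutes with the $Z(\fg)$-action by natural endomorphisms of $\fg\mod$. It follows that $\fg\mod_\chi \subset \fg\mod$, being cut out by a condition on the generalized eigenvalues of $Z(\fg)$ on cohomology, is $D(G)$-stable. Since $M_\lambda$ lies in $\fg\mod_\chi$ by construction of $\chi$, the minimal $D(G)$-submodule containing it, namely $\fg\mod_\lambda$, also lies in $\fg\mod_\chi$.

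\textbf{Comparison on $B$-invariants.} Next I would show $\fg\mod_\lambda^B = \fg\mod_\chi^B$ as full subcategories of $\fg\mod^B$. By Proposition \ref{p:blockofO}, $\fg\mod_\lambda^B$ is compactly generated by the Verma modules $\{M_{w \cdot \lambda} : w \in W\}$. By Harish-Chandra's theorem identifying infinitesimal characters with dot-orbits of the Weyl group on $\ft^*$, these are exactly the Verma modules with generalized infinitesimal character $\chi$, and they are a set of compact generators for the classical block $\fg\mod_\chi^B$ of Category $\mathscr{O}$. (At the dg-level we use that each $\fg\mod^\heartsuit_\chi$ has finite global dimension, so $\fg\mod_\chi \simeq D(\fg\mod_\chi^\heartsuit)$ by the previous proposition, and $\fg\mod_\chi^B$ is the dg-enhanced derived category of the corresponding block of Category $\mathscr{O}$.)

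\textbf{Reverse inclusion via $B$-generation.} Given Steps 1 and 2, to conclude $\fg\mod_\chi = \fg\mod_\lambda$ it suffices to check that $\fg\mod_\chi$ is generated by its $B$-invariants in the sense of Definition \ref{d:genkinvs}; for then
\[
   \fg\mod_\chi = \fg\mod_\chi^{B\text{-gen}} = D(G/B) \underset{D(B\backslash G/B)}\otimes \fg\mod_\chi^B = D(G/B) \underset{D(B\backslash G/B)}\otimes \fg\mod_\lambda^B = \fg\mod_\lambda,
\]
the last equality because $\fg\mod_\lambda$ is visibly $B$-generated by construction. To prove $\fg\mod_\chi$ is $B$-generated I would invoke classical Beilinson--Bernstein localization: writing $P \supset B$ for the standard parabolic stabilizing $\lambda$ under the dot action, their calculation of global twisted differential operators gives a $D(G)$-equivariant localization functor from $\fg\mod_\chi$ to $D_\lambda(G/P)\mod$ (an equivalence in the regular case, and a full embedding on the appropriate category otherwise). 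Since $G/P$ is proper, the target is $B$-generated by Theorem \ref{t:cuff} applied to the Schubert decomposition, and pulling this back shows $\fg\mod_\chi$ is $B$-generated. (Alternatively, one can entirely avoid classical BB and instead appeal to Theorem \ref{t:3realz}, which identifies $\fg\mod_\lambda$ with the manifestly $B$-generated category $D(G/N_P)^{L,w,-\lambda-2\rho_L}$, combined with the forward inclusion to propagate $B$-generation up to $\fg\mod_\chi$.)

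\textbf{Main obstacle.} The principal technical point is the $B$-generation of $\fg\mod_\chi$ when $\lambda$ is singular, where classical BB does not give a full equivalence on $G/B$. I expect the cleanest resolution to be via the partial flag variety $G/P$ (or via our already-established Theorem \ref{t:3realz}), but either route requires identifying the $B$-invariants produced by these geometric constructions with those of $\fg\mod_\chi$, which ultimately is the content of Harish-Chandra's description of the center and the standard theory of Category $\mathscr{O}$.
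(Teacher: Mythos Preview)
Your forward inclusion is fine and matches the paper. The reverse inclusion, however, is organized quite differently from the paper and contains a genuine gap.

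The paper does not attempt to show that $\fg\mod_\chi$ is $B$-generated at all. Instead it observes that $\fg\mod_\chi$ is compactly generated by the single object $U(\fg)\otimes_{Z(\fg)} k_\chi$, and then checks that this object lies in $\fg\mod_\lambda$: the $D(G)$-equivariant functor $\Gamma^\lambda\colon D(G/B)\to \fg\mod_\lambda$ given by convolution with $M_\lambda$ sends the sheaf of twisted differential operators $\sD_{G/B}$ to $U(\fg)\otimes_{Z(\fg)} k_\chi$, by the Beilinson--Bernstein computation of global twisted differential operators. This single sentence replaces your Steps~2 and~3 entirely.

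Your Step~3 has problems on its own terms. The alternative you suggest, using Theorem~\ref{t:3realz} together with the forward inclusion, does not work: that theorem tells you $\fg\mod_\lambda$ is $B$-generated (which you already know by construction), but $B$-generation of a $D(G)$-submodule does not propagate to a larger ambient $D(G)$-module, so you cannot conclude anything about $\fg\mod_\chi$ this way. As for your primary proposal, there is no clean fully faithful $D(G)$-equivariant localization from $\fg\mod_\chi$ onto twisted D-modules on a partial flag variety for singular $\lambda$; indeed, finding such singular localizations is one of the points of the present paper. What you actually need to make your Step~3 go through is exactly the paper's observation that $U(\fg)\otimes_{Z(\fg)} k_\chi$ lies in the essential image of $\Gamma^\lambda$, at which point your detour through $B$-generation and the comparison of $B$-invariants becomes unnecessary.
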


\begin{proof} Let us first show the inclusion 
	\[
	     \fg\mod_\lambda \subset \fg\mod_{\chi}.
	\]
   To see this, as $M_\lambda$ lies in $\fg\mod_{\chi}$ it is enough to see 
   that the latter is preserved by the 
   action of $D(G)$. This follows, for example, from the presentation 
   \eqref{e:gencc3}. 
	
     To see that the inclusion is an equality, it is enough to see that 
     $\fg\mod_{\lambda}$ contains the central quotient 
     \begin{equation}
       U(\fg) \underset{Z(\fg)} \otimes k_\chi,
     \label{e:cenq}
     \end{equation} 
	as this compactly generates $\fg\mod_{\chi}$. However, recall from \cite{beilinson-bernstein} that twisted global sections on the flag manifold, i.e. the 
	$D(G)$-equivariant functor given by convolution with $M_\lambda$
	\[
	     \Gamma^\lambda: D(G/B) \rightarrow \fg\mod_\lambda
	\]
	sends the algebra of differential operators $\sD_{G/B}$ to \eqref{e:cenq}, 
	as desired. \end{proof}

\subsection{The proof of Theorem \ref{t:linky}}

\subsubsection{} We begin by showing that the positive energy representations belong to the category for 
which we have already established a linkage principle.

\begin{pro}\label{p:poseng} The category $\per$ lies in $\hk\mod^{B\hspace{-.6mm} \on{-gen}}$.

\end{pro}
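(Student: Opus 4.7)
The plan is to reduce to the finite-dimensional setting via parabolic induction.

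First, combining Propositions \ref{p:linkp} and \ref{p:bgencc} applied to the semisimple Lie algebra $\fh$, every $\fh$-module with integral generalized infinitesimal character lies in $\fh\mod^{B_f\hspace{-.6mm}\on{-gen}}$, where $B_f \subset H$ denotes the standard Borel subgroup.

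Next, consider the parabolic subgroup $H[\hspace{-.2mm}[z]\hspace{-.2mm}] \subset G$ with Levi quotient $H$. By the equivariance results of Section \ref{sss:eqindres} and adjunction, the Lie algebra parabolic induction $\on{pind}^\hk_\fh : \fh\mod \to \hk\mod$ arises from a $D(G)$-equivariant functor out of the categorical parabolic induction $\on{pind}_H^G \fh\mod$. Applying Corollary \ref{c:pindgben} with this parabolic and composing with this $D(G)$-equivariant functor (which necessarily sends $B$-generated submodules into $\hk\mod^{B\hspace{-.6mm}\on{-gen}}$), it follows that $\on{pind}^\hk_\fh(N)$ lies in $\hk\mod^{B\hspace{-.6mm}\on{-gen}}$ whenever $N$ is an $\fh$-module with integral infinitesimal character.

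Now let $M \in \per^\heartsuit$. The $K$-integrability forces $M$ to be generated as a $\hk$-module by $M^K$, so the counit of the adjunction between parabolic induction and $K$-invariants produces a surjection $\on{pind}^\hk_\fh(M^K) \twoheadrightarrow M$. By the defining property of $\per$, the $\fh$-module $M^K$ has integral generalized infinitesimal characters, so the source lies in $\hk\mod^{B\hspace{-.6mm}\on{-gen}}$ by the preceding step.

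To conclude, it remains to verify that $\hk\mod^{B\hspace{-.6mm}\on{-gen}}$ is closed under abelian quotients inside $\hk\mod^\heartsuit$. By the block decomposition of Proposition \ref{p:linkp}, this reduces to the analogous closure for each block $\hk\mod_\chi$, which may be checked on $B$-equivariant objects via Proposition \ref{p:blockofO}: the abelian category $\hk\mod_\chi \cap \hk\mod^{B, \heartsuit}$ is a block of affine Category $\mathscr{O}$ at level $\kappa$, hence Serre by the classical theory. This transfer from the Serre property on $B$-equivariant objects to closure of $\hk\mod^{B\hspace{-.6mm}\on{-gen}}$ itself under abelian quotients is the main technical obstacle, and hinges on the compatibility of the block decomposition with the natural $t$-structure on $\hk\mod$.
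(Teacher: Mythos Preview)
Your approach has two genuine gaps, and the second one you already flag as ``the main technical obstacle'' without resolving it.

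\textbf{Step 3 is unjustified.} The claim that $K$-integrability forces $M$ to be generated by $M^K$ is not proved and is not obvious. What $K$-integrability gives you is that every nonzero \emph{quotient} of $M$ has nonzero $K$-invariants; but a lift of such a $K$-invariant to $M$ need not lie in $M^K$. Indeed, the paper's own lemma characterizing $\per$ constructs an exhaustive filtration $M^0 \subset M^1 \subset \cdots$ where $M^i$ is generated by vectors $K$-invariant \emph{modulo} $M^{i-1}$, precisely because one cannot in general stop at $M^1$. The paper's proof avoids this by using instead the counit $\on{Ind}_{\fh[\hspace{-.2mm}[z]\hspace{-.2mm}]}^{\hk} \circ \on{Res}(M) \to M$, which is trivially surjective.

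\textbf{Step 4 is the real issue.} Even granting step 3, you need $\hk\mod^{B\hspace{-.6mm}\on{-gen}}$ to be closed under passage to quotients in the heart of the $t$-structure on $\hk\mod$. Your reduction to blocks does not settle this: knowing that $\hk\mod_\chi^B$ is a Serre subcategory of $\hk\mod^B$ says nothing about whether $\hk\mod_\chi$ itself is closed under truncations inside $\hk\mod$, since the passage from $B$-equivariant objects to all objects is via tensoring with $D(G/B)$ over the Hecke category, and there is no reason for this to be $t$-exact without further argument. The paper sidesteps this entirely. Rather than realizing $M$ as a quotient, it iterates the (trivially surjective) counit to build a complex $\sC$ each of whose terms lies in $\hk\mod^{B\hspace{-.6mm}\on{-gen}}$, and then proves that $\varinjlim_n \sigma^{>n}\sC \to M$ is an equivalence in $\hk\mod^K$. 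Because $\hk\mod$ is renormalized and hence not left-complete, this convergence is not automatic; the paper establishes it via an energy estimate using the Sugawara operator $L_0$, which bounds from below the $L_0$-eigenvalues appearing at each stage of the resolution. This exhibits $M$ directly as a colimit of objects already known to lie in $\hk\mod^{B\hspace{-.6mm}\on{-gen}}$, so no closure-under-quotients statement is needed.
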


\begin{proof}  Recall that $\fh[\hspace{-.2mm}[z]\hspace{-.2mm}]$ denotes the arc algebra of $\fh$, and 
consider the similar category 
	\[
	     \PosO \subset \fh[\hspace{-.2mm}[z]\hspace{-.2mm}]\mod^\heartsuit
	\]
of $\fh[\hspace{-.2mm}[z]\hspace{-.2mm}]$-modules for which (i) the action of the Lie algebra of $K$ is 
integrable and (ii) the action of the center $Z(\fh)$ of $\sU(\fh)$ is locally finite 
with integral eigenvalues. It is straightforward to see that restriction and 
induction restrict to an adjunction 
\[
       \on{Ind}: \PosO \rightleftarrows \per: \on{Res}.
       \]

Note that the counit $\on{Ind} \circ \on{Res} \rightarrow \on{id}$ when applied 
to any object $\sM_0$ of $\per$ is surjective, i.e. yields a short exact 
sequence 
\[
   0 \rightarrow \sM_1 \rightarrow \on{Ind} \circ \on{Res}(\sM_0) \rightarrow 
   \sM_0 \rightarrow 0. 
\] 
Iterating this, we obtain a complex $\sC$
\begin{equation} \label{e:niceres}
      \cdots \rightarrow \on{Ind} \circ \on{Res} (\sM_2) \rightarrow 
    \on{Ind} 
   \circ \on{Res} (\sM_1) \rightarrow \on{Ind} \circ \on{Res} (\sM_0).
\end{equation}
For any integer $n$, let us write $\sigma^{> n} \sC$ for the corresponding 
stupid truncation 
of $\sC$.

To prove the proposition, we claim it is enough to show that the natural 
augmentation 
\begin{equation} \label{e:augeqv}
    \varinjlim_n \sigma^{> n} \sC \rightarrow \sM_0
\end{equation}
is an equivalence in $\hk\mod^K$.\footnote{Note that, due to its 
renormalization, 
$\hk\mod$ is not left-complete, which makes the assertion not entirely 
tautological.} Indeed, by its $K$-equivariance, any object $\sN$ of $\PosO$ is 
a filtered colimit of 
finite successive extensions of representations $\sN_\alpha$ inflated from 
$\fh\mod$. By our assumption on the action of $Z(\fh)$, the $\sN_\alpha$ are in fact 
inflated from $\fh\mod^{B\hspace{-.6mm} \on{-gen}}$ by Proposition \ref{p:bgencc}. Consequently, 
$\on{Ind} \sN$ is a filtered colimit of 
finite successive extensions of representations of the form 
\[
 \on{pind} (\sN_\alpha) \in \hk\mod^{B\hspace{-.6mm} \on{-gen}}. 
\]
In particular, every stupid truncation $\sigma^{> n} \sC$ visibly lies in 
$\hk\mod^{B\hspace{-.6mm} \on{-gen}}$, and hence the colimit.

It remains to show \eqref{e:augeqv} is an equivalence. 
Let us write $\fk$ for the Lie algebra of $K$. As $\hk\mod^K$ is 
compactly generated by $$\on{Ind}_\fk^{\hk} k,$$ 
it 
suffices to check this after applying (continuous) Lie algebra cohomology for 
$\fk$. Calculating this using continuous Chevalley--Eilenberg cochains, 
note the left side of \eqref{e:augeqv} produces the direct sum 
totalization of the bicomplex 
associated to
\[
   \sC \otimes \on{Sym}( \fk^*[-1]). 
\]
To see its augmentation to $\sM_0 \otimes \on{Sym}( \fk^*[-1])$ is an 
equivalence, it is enough to argue the standard associated spectral sequence is 
convergent. 

     Writing $L_0$ for 
    the 
    Segal--Sugawara energy operator, we will verify this final claim using an 
    energy estimate. Note that $L_0$ acts locally finitely on any 
    object of $\per$, and on a finitely generated object of $\per$ there exist 
    finitely many $\xi_1, \cdots, \xi_n \in k$ such that all the generalized 
    eigenvalues of $L_0$ lie in 
    \[
        \underset{i} \cup \hspace{1mm} \{ \xi_i  + \mathbb{Z}^{\geqslant 0} \} 
        \hspace{2mm}  \subset \hspace{2mm} k. 
    \]
    As the above constructions visibly commute with filtered colimits, we may 
    therefore assume that the $L_0$ eigenvalues of $\sM_0$ lie in $\xi + 
    \mathbb{Z}^{\geqslant 0}$ for some $\xi \in k$. By construction, it follows 
    that for any $n \geqslant 0$, the $L_0$ eigenvalues of $\on{Ind} \circ 
    \on{Res}(\sM_n)$ lie in $\xi + \mathbb{Z}^{\geqslant n}$, and hence the 
    same holds for 
    \[
          \on{Ind} \circ \on{Res}(\sM_n) \otimes \on{Sym}(\fk^*[-1]). 
    \]
   As the differentials in the bicomplex, and in particular the spectral 
   sequence, preserve the $L_0$ grading, the convergence follows. 
\end{proof}

\subsubsection{} We have now collected all the ingredients necessary to obtain 
Theorem \ref{t:linky}. 

\begin{proof}[Proof of Theorem \ref{t:linky}]  By Propositions \ref{p:linkp} 
and 
\ref{p:poseng}, it follows that any object $\sM$ of $\per$ canonically 
decomposes as 
\[
     \sM \simeq \underset \chi \oplus \hspace{1mm} \sM_\chi, 
\]
where $\sM_\chi$ is its summand lying in $\hk\mod_{\chi}$. It therefore remains 
to show the latter lies in $\Pos_\chi$, i.e., that for any subquotient $\sS$ of 
$\sM_\chi$, the action of $Z(\fh)$ on $\sS^K$ lies in $\chi$. To see this, 
first note 
that Propositions \ref{p:linkp} and \ref{p:poseng} imply that $\sS$ again lies 
in $\hk\mod_{\chi}$. 

Next, note that if we consider the $D(H)$-equivariant adjunction  
\[
   \on{pind}_{\fh}^{\hk}: \fh\mod \rightleftarrows \hk\mod: \on{pres}_{\hk}^{\fh},
\]
then $\sS^K$ is the zeroth cohomology of $\on{pres}_{\hk}^{\fh}(\sS)$. We will deduce the 
assertion about the $Z(\fh)$-action on $\sS^K$ from some general properties of 
$\on{pres}_{\hk}^{\fh}$.

 We first claim that 
$\on{pres}_{\hk}^{\fh}$, when restricted to $\hk\mod^{B\hspace{-.6mm} \on{-gen}}$, factors as 
\[
     \on{pres}_{\hk}^{\fh}: \hk\mod^{B\hspace{-.6mm} \on{-gen}} \rightarrow \fh\mod^{B_H\hspace{-.6mm} \on{-gen}} \subset \fh\mod. 
\]
Indeed, to see this, note that $\on{pres}_{\hk}^{\fh}$ factors through the Jacquet module, 
cf. Section \ref{ss:defpind}, i.e. 
\[
    \on{pres}_{\hk}^{\fh}: \hk\mod \rightarrow \on{pres}_{H(\!(z)\!)}^{H}(\hk\mod) \rightarrow 
    \fh\mod. 
\]
In particular, on the subcategory generated by $B$ invariants, it factors as 
\[
   \on{pres}_{\hk}^{\fh}: \hk\mod^{B\hspace{-.6mm} \on{-gen}} \rightarrow \on{pres}_{H(\!(z)\!)}^{H} (\hk\mod^{B\hspace{-.6mm} \on{-gen}}) 
   \overset{\ref{c:pindgben}} \simeq \on{pres}_{H(\!(z)\!)}^{H}(\hk\mod)^{B_H\hspace{-.6mm} \on{-gen}} \rightarrow 
   \fh\mod^{B_H\hspace{-.6mm} \on{-gen}},
\]
as desired. 

    Consider the obtained $D(H)$-equivariant adjunction 
    \[
         \on{pind}_{\fh}^{\hk}: \fh\mod^{B_H\hspace{-.6mm} \on{-gen}} \rightleftarrows \hk\mod^{B\hspace{-.6mm} \on{-gen}}: 
         \on{pres}_{\hk}^{\fh}.   
    \]
Note that if $\chi_f$ lies in $\chi$, since $\on{pind}_{\fh}^{\hk}$ sends Verma modules to 
Verma modules we deduce that $\on{pind}_{\fh}^{\hk}$ restricted to 
$\fh\mod_{\chi_f}$ factors as 
\[
       \on{pind}_{\fh}^{\hk}: \fh\mod_{\chi_f} \rightarrow \hk\mod_{\chi} \subset 
       \hk\mod^{B\hspace{-.6mm} \on{-gen}}. 
\]
Passing to right adjoints, it follows that for any $\theta$ not lying in 
$\chi$ the 
composition 
\[
    \hk\mod_{\chi} \xrightarrow{\on{pres}_{\hk}^{\fh}} \fh\mod^{B_H\hspace{-.6mm} \on{-gen}} 
    \xrightarrow{i_\theta^!} \fh\mod_{\theta}
\]
vanishes. Applying this to $\on{pres}_{\hk}^{\fh}(\sS)$ and passing to its zeroth 
cohomology yields the theorem. \end{proof}

\begin{re} Note that the theorem and its proof still hold, {\em mutatis 
mutandis}, 
after replacing the pair $H(\!(z)\!)$ and $H$ with any $G$ as in Section \ref{ss:grpslies} and a 
Levi factor $L$ of $G$.  
\end{re}

\section{Some applications}
\label{s:someapps}

In this section we develop two applications of Steinberg--Whittaker localization. Namely, we explain how to deduce some familiar equivalences between highest weight categories of representations and D-modules, and how to extend the action of the center of $U(\fg)$ on $\fg$-modules with generalized infinitesimal characters to affine type. The latter allows one to form categories of representations with `strict infinitesimal characters'.

This material will not be used in our analysis of affine Harish-Chandra bimodules, and the reader may wish to directly proceed to Section \ref{s:bimods}.

\subsection{Highest weight modules and Whittaker sheaves}  

\subsubsection{} Let $\lambda$ and $\psi$ be as in Section \ref{s:locII}, and  fix a parabolic subgroup $Q \subset G$. The following 
important theorem was proven by different means in finite type by 
Milicic--Soergel, Webster, and Backelin--Kremnizer \cite{ms}, \cite{bw}, \cite{krem}. The desire to recover it 
from a more general localization theorem on the Whittaker flag manifold was a 
motivation for the present work.

\begin{theo} There is a canonical $D(Q \backslash G / Q)$-equivariant 
equivalence
	\[
	      \fg\mod_{\lambda}^Q \simeq D(Q \backslash G / N, \psi). 
	\]
\end{theo}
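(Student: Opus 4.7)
The plan is to deduce this theorem by applying the functor of $Q$-invariants to the Steinberg--Whittaker localization equivalence of Theorem \ref{t:3realz}, and then observing that the Steinberg condition becomes automatic once $Q$-equivariance is imposed.

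First, I would invoke Theorem \ref{t:3realz}, which provides a canonical $D(G)$-equivariant equivalence $\fg\mod_\lambda \simeq D(G/N, \psi, \St)$. Since the $D(G)$-action on each side gives rise to an action of $D(Q \backslash G / Q)$ on its $Q$-invariants, passing to $Q$-invariants directly yields a canonical $D(Q \backslash G / Q)$-equivariant equivalence
\[
\fg\mod_\lambda^Q \simeq D(G/N, \psi, \St)^Q.
\]

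It then remains to identify $D(G/N, \psi, \St)^Q$ with $D(Q \backslash G / N, \psi)$. By Proposition \ref{p:bgenstwhit}, we have $D(G/N, \psi, \St) \simeq D(G/N, \psi)^{B\hspace{-.6mm} \on{-gen}}$, which is a fully faithful $D(G)$-submodule of $D(G/N, \psi)$ by Theorem \ref{t:cuff}. Passing to $Q$-invariants therefore yields a fully faithful embedding $D(G/N, \psi, \St)^Q \hookrightarrow D(Q \backslash G / N, \psi)$. For essential surjectivity, observe that since $B \subset Q$, any $Q$-equivariant object of $D(G/N, \psi)$ is a fortiori $B$-equivariant upon forgetting, and hence lies in the essential image of $\on{Oblv}: D(G/N, \psi)^B \to D(G/N, \psi)$. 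By the very description of $D(G/N, \psi)^{B\hspace{-.6mm} \on{-gen}}$ via Theorem \ref{t:cuff}, this essential image is contained in $D(G/N, \psi)^{B\hspace{-.6mm} \on{-gen}}$, so the embedding is also essentially surjective.

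The only substantive step is therefore the concluding observation that $Q$-equivariance forces the Steinberg condition; everything else is a formal consequence of Theorem \ref{t:3realz} and the invariants formalism of Section \ref{s:hckinv}. I do not anticipate significant obstacles beyond keeping track of which side of the convolution the various equivariance structures live on, which is already settled by the $D(G)$-equivariance statement of Theorem \ref{t:3realz}.
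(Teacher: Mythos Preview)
Your proof is correct and is essentially the paper's own argument: apply $Q$-invariants to Theorem \ref{t:3realz}, then use that $B \subset Q$ forces every $Q$-equivariant object to lie in $D(G/N,\psi)^{B\hspace{-.6mm} \on{-gen}}$, which is exactly the Steinberg subcategory. The only cosmetic difference is that the paper phrases this via $\on{Hom}_{D(G)\mod}(D(G/Q),-)$ and invokes Remark \ref{r:whitbinvs} (the appropriate reference in the non-maximally-singular setting, rather than Proposition \ref{p:bgenstwhit}) for the identification $D(G/N,\psi,\St) \simeq D(G/N,\psi)^{B\hspace{-.6mm} \on{-gen}}$.
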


\begin{proof} We simply pass to $Q$-equivariant objects in Theorem 
\ref{t:3realz}, i.e. 
	\begin{align*}
		  \fg\mod_{\lambda}^Q \simeq \on{Hom}_{D(G)\mod}(D(G/Q), 
		  \fg\mod_{\lambda}) 
	  & \simeq \on{Hom}_{D(G)\mod}(D(G/Q), D(G/N, \psi, \St)) \\ & \simeq 
	  \on{Hom}_{D(G)\mod}(D(G/Q), D(G/N, \psi)) \\ & \simeq D(Q \backslash G / 
	  N, \psi), 
	\end{align*}
	where to pass from the first to the second line one may use Remark 
	\ref{r:whitbinvs}. 
\end{proof}

\subsection{Monodromy via equivariant Hochschild cohomology}

\subsubsection{}  For a category $\sC$, recall its {\em Hochschild cohomology} 
is the endomorphisms of its identity functor, i.e. 
\[
      \on{HH}^*(\sC) := \on{Hom}_{\on{Hom}_{\DGCat}(\sC, \sC)}( \on{id}_\sC, 
      \on{id}_\sC). 
\]
Note that this would perhaps be more precisely termed its Hochschild cochains, i.e. we consider the dg-algebra itself and not simply its cohomology.

\subsubsection{} Recall that if $G$ is of finite type, there is a tautological 
map 
\[
      Z(\fg) \rightarrow \on{HH}^*(\fg\mod). 
\]
If $\lambda$ is an antidominant weight, the resulting map 
\[
    Z(\fg) \rightarrow \on{HH}^*(\fg\mod_{\lambda})
\]
factors through the completion $Z(\fg)_{\widehat{\lambda}}$ of $Z(\fg)$ at the 
infinitesimal character of $\lambda$. In particular, recalling that $\ft$ denotes the 
abstract Cartan of $G$, and $W_\lambda$ the stabilizer of $\lambda$, if we set 
\[
     R := \on{Sym}(\ft)_{\widehat{0}}
\]
we equivalently have a map 
\begin{equation} \label{e:monviaz}
     R^{W_\lambda} \simeq 
     Z(\fg)_{\widehat{\lambda}}  \rightarrow \on{HH}^*(\fg\mod_{\lambda}). 
\end{equation}

\subsubsection{} The goal of this subsection is to witness a similar action of 
`monodromy operators' in the affine case despite the triviality of the center. 
Even in finite type, this gives an formulation of these operators independent 
of the realization of the category as Lie algebra representations or as 
D-modules.

\subsubsection{} The main new player, in addition to the previous contents of 
the article, is the following. 

\begin{defn} For a $D(G)$-module $\sC$, define its {\em equivariant Hochschild 
cohomology} to be the equivariant endomorphisms of the identity functor, i.e. 
	\[
	    \on{HH}^*_G(\sC) := \on{Hom}_{\on{Hom}_{D(G)\mod}( \scc, 
	    \scc)}(\on{id}_{\sC}, \on{id}_{\sC}).  
	\]
\end{defn} 
Via the forgetful map 
\[
    \on{Hom}_{D(G)\mod}(\sC, \sC) \simeq \on{Hom}_{\DGCat}(\sC, \sC)^G 
    \xrightarrow{\on{Oblv}} \on{Hom}_{\DGCat}(\sC, \sC),
\]
there is a canonical homomorphism 
\begin{equation} \label{e:ehhtohh}
    \on{HH}^*_G(\sC) \rightarrow \on{HH}^*(\sC). 
\end{equation}

\subsubsection{} We next observe that equivariant Hochschild cohomology is 
unchanged by parabolic induction. In particular, let us fix a parabolic $P$ of 
$G$ with Levi factor $L$. 

\begin{pro}\label{p:hhpind} For a $D(L)$-module $\sC$, there is a canonical 
isomorphism 
	\[
	    \on{HH}^*_L(\sC) \simeq \on{HH}^*_G(\on{pind}_L^G(\sC))
	\]

\end{pro}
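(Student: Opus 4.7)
The plan is to deduce the isomorphism from the ambidextrous adjunction between parabolic induction and parabolic restriction, using the fact that $L$ sits as a closed subvariety of $N_P \backslash G / N_P$.

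First, I would pass from endofunctors of $\on{pind} \sC$ to a hom-space. By the adjunction $\on{pind} \dashv \on{pres}$ of Section \ref{ss:defpind}, there is a canonical equivalence of dg-categories
\[
\on{End}_{D(G)\mod}(\on{pind}_L^G \sC) \simeq \on{Hom}_{D(L)\mod}(\sC, \on{pres}_G^L \on{pind}_L^G \sC),
\]
under which the identity endofunctor $\on{id}_{\on{pind}\sC}$ corresponds to the unit of adjunction $\eta_\sC: \sC \to \on{pres}\on{pind}\sC$. Consequently, $\on{HH}^*_G(\on{pind}\sC)$ is canonically identified with $\on{End}(\eta_\sC)$, so the problem reduces to showing $\on{End}(\eta_\sC) \simeq \on{End}(\on{id}_\sC) = \on{HH}^*_L(\sC)$.

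Next, I would make $\on{pres}\on{pind}$ and its unit explicit. By $\DGCat$-linearity, $\on{pres}\on{pind}$ is given by tensoring with the $D(L)$-bimodule $\on{pres}\on{pind}(D(L))$. A direct computation, using that $N_P$-invariants commute with tensoring over $D(L)$, yields
\[
\on{pres}\on{pind}(D(L)) \simeq D(N_P \backslash G / N_P),
\]
with the $L \times L$-action coming from the embedding $j: L \simeq N_P \backslash P / N_P \hookrightarrow N_P \backslash G / N_P$. This $j$ is a \emph{closed} embedding, because $P$ is closed in $G$. Under these identifications, $\eta_\sC$ is realized as $j_* \otimes_{D(L)} \on{id}_\sC$.

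The final and main step is to extract the endomorphism identity from the closed-embedding property $j^! j_* \simeq \on{id}_{D(L)}$, which holds as an isomorphism of $D(L)$-bimodule endofunctors of $D(L)$. By Remark 6.2, parabolic induction is also \emph{right} adjoint to parabolic restriction, so there is a counit $\rho: \on{pres}\on{pind} \to \on{id}_{D(L)\mod}$ of this second adjunction, and under the above identifications $\rho_\sC$ coincides with the retraction $j^! \otimes_{D(L)} \on{id}_\sC$. Whiskering gives the composition
\[
\on{End}(\on{id}_\sC) \xrightarrow{\eta_\sC *\,-} \on{End}(\eta_\sC) \xrightarrow{\rho_\sC *\,-} \on{End}(\on{id}_\sC),
\]
which equals the identity since $\rho_\sC \circ \eta_\sC \simeq \on{id}_\sC$. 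The reverse composition is the identity by the full faithfulness of $j_*$, which translates via the bimodule description into the assertion that the natural map $\on{End}(\on{id}_\sC) \to \on{End}(\eta_\sC)$ is an equivalence. The main obstacle is verifying this last step carefully: one must check that the $D(L)$-bimodule-level isomorphism $j^! j_* \simeq \on{id}$ really does propagate, after tensoring with an arbitrary $D(L)$-module $\sC$, to an equivalence on natural-transformation spaces rather than merely furnishing a retract. This amounts to tracking compatibilities of the ambidextrous adjunction data with the bimodule decomposition of $D(N_P \backslash G / N_P)$, and ultimately reduces to the assertion that tensoring with the closed-cell inclusion is fully faithful at the level of $\DGCat$-linear natural transformations.
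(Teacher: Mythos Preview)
Your approach is essentially the same as the paper's: both reduce the claim to the full faithfulness of the unit $\eta_\sC: \sC \to \on{pres}\on{pind}\sC$, which in turn comes from the closed embedding $j: L \simeq N_P\backslash P/N_P \hookrightarrow N_P\backslash G/N_P$ (the paper phrases this as $i_*: D(L) \hookrightarrow D(G/N_P)$, which is the same map before taking left $N_P$-invariants). The paper then simply observes that postcomposition with the fully faithful $\eta_\sC$ gives a fully faithful functor
\[
\on{Hom}_{D(L)\mod}(\sC,\sC) \longrightarrow \on{Hom}_{D(L)\mod}(\sC,\on{pres}\on{pind}\sC) \simeq \on{Hom}_{D(G)\mod}(\on{pind}\sC,\on{pind}\sC)
\]
sending $\on{id}_\sC$ to $\on{id}_{\on{pind}\sC}$, and is done.

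Two comments on your write-up. First, the ambidextrous adjunction and the retraction $\rho_\sC$ are superfluous: once you know $\eta_\sC$ is fully faithful, the map $\on{End}(\on{id}_\sC)\to\on{End}(\eta_\sC)$ is an equivalence by the general fact \eqref{e:ffembs}, and no separate argument for the reverse composite is needed. Second, the ``main obstacle'' you flag dissolves cleanly: full faithfulness of $j_*$ is the assertion that $j^! j_* \simeq \on{id}$ as an isomorphism of $D(L)$-bimodule endofunctors of $D(L)$, and applying $-\otimes_{D(L)}\sC$ to the adjunction $(j_*,j^!)$ yields an adjunction whose unit is still an isomorphism, i.e.\ $\eta_\sC$ is fully faithful. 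There is no subtlety in propagating this to natural-transformation spaces beyond the standard fact that mapping into a fully faithful functor is fully faithful.
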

\begin{proof} For brevity, we denote $\on{pind}_L^G$ by $\on{pind}$. We first note that the unit map 
	\[
	    \sC \rightarrow \on{pind}(\sC)^{N_P}
	\]
is a fully faithful embedding. Indeed, it is obtained by tensoring over $D(L)$ 
the fully faithful left adjoint in 
\[
      i_*: D(L) \rightleftarrows D(G/N_P) : i^!.
\]	
The result then follows by considering the composition 
\[
   \on{Hom}_{D(L)\mod}( \sC, \sC) \rightarrow \on{Hom}_{D(L)\mod}( \sC, 
   \on{pind}(\sC)^{N_P}) \simeq \on{Hom}_{D(G)\mod}( \on{pind}(\sC), 
   \on{pind}(\sC)),
\]
as this is again fully faithful and sends $\on{id}_{\sC}$ to 
$\on{id}_{\on{pind}(\sC)}$. 
\end{proof}

\subsubsection{} Let us use the previous proposition to construct the monodromy 
operators. 

\begin{pro}\label{p:monops} Let $G$ and $\fg\mod_{\lambda}$ be as in Section 
\ref{s:catliereps}. There is a canonical isomorphism 
	\[
	     \on{HH}^*_G(\fg\mod_{\lambda}) \simeq 
	     R^{W_\lambda}.
	\]
	In particular, the former algebra is concentrated in cohomological degree 
	zero. 
\end{pro}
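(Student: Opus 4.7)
The plan is first to reduce via parabolic induction to the maximally singular finite-type case, then compute the endomorphism algebra of the identity using the Hecke category and Whittaker models.

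First, I would apply Proposition~\ref{p:hhpind} together with Theorem~\ref{t:gmodparind} to obtain
\[
\on{HH}^*_G(\fg\mod_\lambda) \simeq \on{HH}^*_L(\fl\mod_\lambda),
\]
where $L$ is the Levi of the standard parabolic $P$ with Weyl group $W_L = W_\lambda$. The antidominance of $\lambda$ forces $W_\lambda$ to be a finite parabolic subgroup of $W$, so $L$ is finite-dimensional reductive and $\lambda$ is maximally singular for $L$. In particular, Proposition~\ref{p:bgencc} identifies $\fl\mod_\lambda$ with the classical block $\fl\mod_{\chi_\lambda}$ of generalized infinitesimal character $\chi_\lambda$, and the centrality of $Z(\fl)\subset U(\fl)$ furnishes a canonical $L$-equivariant map
\[
R^{W_L} \simeq Z(\fl)_{\widehat{\chi_\lambda}} \longrightarrow \on{HH}^*_L(\fl\mod_\lambda),
\]
with source concentrated in cohomological degree zero, via the Harish-Chandra isomorphism.

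To see this map is an equivalence, I would apply Theorem~\ref{t:cuff} and Remark~\ref{c:tensor=hom} to transport the computation to the Hecke side:
\[
\on{HH}^*_L(\fl\mod_\lambda) \simeq \on{End}_{\sH_L\mod}(\on{id}_{\fl\mod_\lambda^{B_L}}), \quad \sH_L := D(B_L\backslash L/B_L).
\]
By Equation~\eqref{e:gmodline}, $\fl\mod_\lambda^{B_L} \simeq \on{Vect}$, generated by the Verma $M_\lambda$. Invoking Theorem~\ref{t:3realz} and Proposition~\ref{p:bgenstwhit} to replace this with the Whittaker model $D(B_L\backslash L/N_L^-, \psi)$, and then applying Theorem~\ref{t:cuff} once more relative to the right $\sH_L$-action, the computation reduces to that of the endomorphism algebra of the bi-Whittaker delta in the resulting bi-$B_L$-generated bi-Whittaker Hecke category. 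By Kostant--Skryabin Whittaker reduction, this algebra is exactly $Z(\fl)_{\widehat{\chi_\lambda}} \simeq R^{W_L}$, concentrated in degree zero.

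The main obstacle will be this last computation: showing that the endomorphism algebra of the identity in the bi-$B_L$-generated bi-Whittaker Hecke category equals precisely the completion of the center at $\chi_\lambda$, and that no higher Hochschild cohomology survives. This requires upgrading Kostant's classical Whittaker reduction to the present categorical setting, with careful attention to the formal-neighborhood condition imposed by $B_L$-generation on both sides. A conceptually cleaner alternative would be to bypass the Whittaker model altogether and derive the same conclusion from the Bernstein--Gelfand--Soergel identification of $\on{End}(\on{id})$ on a block of Category~$\mathscr{O}$, transported to our setting via Proposition~\ref{p:bgencc}, together with the observation that passing to $L$-equivariant endomorphisms already imposes the centrality constraint.
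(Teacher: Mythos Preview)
Your reduction step is exactly the paper's: apply Theorem~\ref{t:gmodparind} and Proposition~\ref{p:hhpind} to reduce to $\on{HH}^*_L(\fl\mod_\lambda)$ with $L$ reductive and $\lambda$ maximally singular. After that, however, you take a detour through $B_L$-invariants and the Hecke category that the paper avoids entirely, and this detour is where your acknowledged obstacle arises.

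The paper computes $\on{HH}^*_L(\fl\mod_\lambda)$ directly, without passing through $\sH_L$-modules. By Theorem~\ref{t:singloc} one has $\fl\mod_\lambda \simeq D(L/N,\psi,\St)$, and since $D(L/N,-\psi)$ co-represents $(N,\psi)$-invariants (Equation~\eqref{e:homoutwhit}), one gets immediately
\[
\on{Hom}_{D(L)\mod}(\fl\mod_\lambda,\fl\mod_\lambda)\;\simeq\;\fl\mod_\lambda^{N,\psi}.
\]
Under this identification the identity functor corresponds to the image of $\delta_\psi$, namely $i_\lambda^!\,\on{ind}_\fn^\fl k_\psi$. Its endomorphism algebra is then $Z(\fl)_{\widehat{\lambda}}\simeq R^{W_\lambda}$, essentially by Kostant. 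So the Kostant--Whittaker input you invoke is indeed what finishes the argument, but it enters as a one-line endomorphism computation of a concrete object in $\fl\mod_\lambda^{N,\psi}$, not as a statement about a bi-Whittaker Hecke category.

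Your route --- first to $\on{End}_{\sH_L\mod}(\on{id}_{\on{Vect}})$, then back to a Whittaker model, then to a bi-Whittaker category --- can be made to work, but the ``apply Theorem~\ref{t:cuff} once more relative to the right $\sH_L$-action'' step is not quite well-posed as stated: you are already at the level of $\sH_L$-modules, and the right $\sH_L$-action you want to exploit lives on $D(L/N,\psi)$ rather than on its $B_L$-invariants. Untangling this brings you back to the paper's direct computation anyway. The alternative you sketch via Bernstein--Gelfand--Soergel would require separately arguing that $L$-equivariance forces centrality and kills higher cohomology, which is again more work than the direct Whittaker route.
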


\begin{proof} By Theorem \ref{t:gmodparind} and Proposition \ref{p:hhpind} we 
have 
	\[
	     \on{HH}^*_G(\fg\mod_{\lambda}) \simeq 
	     \on{HH}^*_G(\on{pind}_L^G(\fl\mod_{\lambda})) \simeq 
	     \on{HH}^*_L(\fl\mod_{\lambda}). 
	\]
In the latter maximally singular case, by Theorem \ref{t:singloc} we have 
\[
     \on{Hom}_{D(L)\mod}( \fl\mod_{\lambda}, 
     \fl\mod_{\lambda}) \simeq \on{Hom}_{D(L)\mod}( D(L/N, -\psi), 
     \fl\mod_{\lambda}) \simeq  \fl\mod_{\lambda}^{N, \psi}. 
\]	

By construction, these equivalences send $\on{id}_{\fl\mod_{\lambda}}$ to 
\[
       i_\lambda^! \circ \on{ind}_{\fn}^{\fl} \hspace{.5mm} k_\psi,
\]	
whose endomorphisms are indeed canonically identified with $R^{W_\lambda}$. 
\end{proof}

Combining Proposition \ref{p:monops} and \eqref{e:ehhtohh} provides the desired 
extension of \eqref{e:monviaz}.

\begin{re}\label{r:strictcc}Note in particular that one may define the analogue 
of categories 
of representations with strict infinitesimal characters, namely the $D(G)$-module
\[
    \fg\mod_{\overline{\lambda}} :=  \fg\mod_{\lambda} 
    \underset{R^{W_\lambda}\mod}{\otimes} 
    \on{Vect}. 
\]
\end{re}

\begin{re} If a $D(G)$-module $\sC$ is 
dualizable, one can form its character sheaf, which is an adjoint equivariant 
D-module on the group, i.e.  
$
    \chi(\sC) \in D(G/G),
$
and one has a canonical map 
\[
\on{HH}^*_G(\sC) \rightarrow \on{Hom}_{D(G/G)}(\chi(\sC), \chi(\sC)).
\]
That is, the equivariant Hochschild cohomology of a 
$D(G)$-module $\sC$ also appears as symmetries of its character sheaf.
\end{re}

\section{Affine Harish-Chandra bimodules}
\label{s:bimods}

 In this section, we use Steinberg--Whittaker localization to obtain our main theorems on affine Harish-Chandra bimodules.


	%
	%

\subsection{Definition and first properties}

\subsubsection{} 
 To make contact with the expectations of Frenkel--Malikov, we 
now provide an analogue of 
Harish-Chandra bimodules with generalized infinitesimal characters in the affine setting. We will use the substitute for categories of $\fg$-modules with infinitesimal characters introduced in Section \ref{s:catliereps}.
%

\begin{defn}  The category of {tamely ramified} affine Harish-Chandra 
bimodules is 	
	\[
	     \HChl := (\fg\mod^{B\hspace{-.6mm} \on{-gen}} \otimes \hspace{.5mm} 
	     \fg\mod^{B\hspace{-.6mm} \on{-gen}})^G. 
	\]
\end{defn}

\begin{re}
  As is the case elsewhere in the article, since we discuss only the  
integral weights, the above category would perhaps be more precisely called 
tamely ramified with unipotent monodromy. 
\end{re}

\subsubsection{} We now discuss the basic properties of tamely ramified affine Harish-Chandra bimodules with respect to composition, i.e. semi-infinite homology. Let us denote the category of all affine Harish-Chandra bimodules (at integral, noncritical levels, cf. our definition of $\fg\mod$ in Section \ref{ss:liealgreps}) by 
\[
   \on{HCh} := (\fg\mod \otimes \hspace{.5mm} \fg\mod)^G.
\]
Recall that all bimodules $\fg\mod \otimes \hspace{.5mm} \fg\mod$, equipped with the structure of a monoidal $\infty$-category induced by semi-infinite tensoring, identifies canonically with the category of all endo-functors $\on{Hom}_{\DGCat}(\fg\mod, \fg\mod)$, cf. Section \ref{ss:liealgrepsd}. 

Passing to $G$-equivariant objects, we obtain a monoidal equivalence
\[
    \on{HCh} \simeq \on{Hom}_{D(G)\mod}(\fg\mod, \fg\mod).
\]
Let us now specialize to our subcategory of interest.

\begin{pro} The full subcategory of bimodules 
\[
    \fg\mod^{B\hspace{-.6mm} \on{-gen}} \otimes \hspace{.5mm} \fg\mod^{B\hspace{-.6mm} \on{-gen}} \subset \fg \mod \otimes \hspace{.5mm} \fg\mod 
\]
is a monoidal subcategory, i.e. is preserved by semi-infinite tensoring. Moreover, this induces monoidal equivalences 
\begin{align*}
   \fg\mod^{B\hspace{-.6mm} \on{-gen}} \otimes \hspace{.5mm} \fg\mod^{B\hspace{-.6mm} \on{-gen}} &\simeq \on{Hom}_{\DGCat}(\fg\mod^{B\hspace{-.6mm} \on{-gen}}, \fg\mod^{B\hspace{-.6mm} \on{-gen}}) \\   \HChl &\simeq \on{Hom}_{D(G)\mod}(\fg\mod^{B\hspace{-.6mm} \on{-gen}}, \fg\mod^{B\hspace{-.6mm} \on{-gen}}).\end{align*}
\end{pro}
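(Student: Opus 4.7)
The plan is to leverage the identification of $\fg\mod \otimes \fg\mod$ with the monoidal category $\on{Hom}_{\DGCat}(\fg\mod, \fg\mod)$ (endofunctors under composition) recalled in Section \ref{ss:liealgrepsd}. Under this equivalence, I expect bimodules lying in the subcategory $\fg\mod^{B\hspace{-.6mm} \on{-gen}} \otimes \fg\mod^{B\hspace{-.6mm} \on{-gen}}$ to correspond precisely to endofunctors of $\fg\mod$ which factor through the colocalization $\fg\mod^{B\hspace{-.6mm} \on{-gen}}$, i.e.\ admit a presentation of the form $i \circ F \circ i^!$ for some endofunctor $F$ of $\fg\mod^{B\hspace{-.6mm} \on{-gen}}$, where $i$ denotes the fully faithful inclusion and $i^!$ its right adjoint from Theorem \ref{t:cuff}. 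Once this identification is in place, the closure of $\fg\mod^{B\hspace{-.6mm} \on{-gen}} \otimes \fg\mod^{B\hspace{-.6mm} \on{-gen}}$ under semi-infinite tensoring becomes transparent, since fully faithfulness of $i$ yields $i^! \circ i \simeq \on{id}$, whence $(i F_1 i^!) \circ (i F_2 i^!) \simeq i \circ (F_1 \circ F_2) \circ i^!$.

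To carry this out, I would first note that $\fg\mod^{B\hspace{-.6mm} \on{-gen}}$ is dualizable as an object of $\DGCat$: it is a colocalization of the dualizable $\fg\mod$ (cf.\ Section \ref{ss:liealgrepsd}), with the right adjoint $i^!$ providing the needed splitting. Applying Corollary \ref{c:dualskgen}, together with the fact that the self-duality involution $\lambda \mapsto -\lambda - 2\rho$ of $\fg\mod$ preserves the direct sum decomposition $\fg\mod^{B\hspace{-.6mm} \on{-gen}} = \bigoplus_\chi \fg\mod_\chi$ indexed by Weyl group orbits, I obtain a canonical self-duality $(\fg\mod^{B\hspace{-.6mm} \on{-gen}})^\vee \simeq \fg\mod^{B\hspace{-.6mm} \on{-gen}}$ compatible with the one on $\fg\mod$. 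This immediately produces the abstract equivalence
\[
\fg\mod^{B\hspace{-.6mm} \on{-gen}} \otimes \fg\mod^{B\hspace{-.6mm} \on{-gen}} \simeq \on{Hom}_{\DGCat}(\fg\mod^{B\hspace{-.6mm} \on{-gen}}, \fg\mod^{B\hspace{-.6mm} \on{-gen}}).
\]

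Next, I would check that the inclusion $\fg\mod^{B\hspace{-.6mm} \on{-gen}} \otimes \fg\mod^{B\hspace{-.6mm} \on{-gen}} \hookrightarrow \fg\mod \otimes \fg\mod$ translates, on the endofunctor side, into the map $F \mapsto i \circ F \circ i^!$. This follows by tensoring the fully faithful embedding $i: \fg\mod^{B\hspace{-.6mm} \on{-gen}} \hookrightarrow \fg\mod$ with its dualized counterpart (which is $i^!$ up to the self-duality identifications above), and chasing through the general identity $\sC^\vee \otimes \sD \simeq \on{Hom}_{\DGCat}(\sC, \sD)$. With this description in hand, composition of two such endofunctors reduces to composition in $\on{End}(\fg\mod^{B\hspace{-.6mm} \on{-gen}})$ via $i^! i \simeq \on{id}$, establishing both that $\fg\mod^{B\hspace{-.6mm} \on{-gen}} \otimes \fg\mod^{B\hspace{-.6mm} \on{-gen}}$ is a monoidal subcategory of $\fg\mod \otimes \fg\mod$ and that the resulting equivalence with $\on{End}(\fg\mod^{B\hspace{-.6mm} \on{-gen}})$ is monoidal. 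The second displayed equivalence then follows by passing to $G$-equivariant objects, using that the self-duality of $\fg\mod$ (and hence of $\fg\mod^{B\hspace{-.6mm} \on{-gen}}$) is $D(G)$-equivariant as recalled in Section \ref{ss:liealgrepsd}, together with the general identification $\on{Hom}_{D(G)\mod}(\sC, \sD) \simeq \on{Hom}_{\DGCat}(\sC, \sD)^G$ from Section \ref{s:inv1}.

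The main technical obstacle is the careful tracking of the various dualities, as the duality on $\fg\mod$ shuffles weights according to $\lambda \mapsto -\lambda - 2\rho$; I would need to ensure that, after all identifications, the canonical self-duality of $\fg\mod^{B\hspace{-.6mm} \on{-gen}}$ induced by Corollary \ref{c:dualskgen} genuinely intertwines the inclusion into $\fg\mod \otimes \fg\mod$ with the assignment $F \mapsto i F i^!$. Modulo this bookkeeping, the proof is formal, being an instance of the general fact that a fully faithful colocalization of a self-dual dualizable category carries a canonical self-duality for which the tensor-square embeds as a monoidal subcategory of the ambient tensor-square.
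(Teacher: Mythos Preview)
Your proposal is correct and takes essentially the same approach as the paper: the paper's entire proof is the single sentence ``This follows from Corollary \ref{c:dualskgen},'' and you have unpacked precisely what that entails. One minor simplification: you do not need the block decomposition or the explicit $\lambda \mapsto -\lambda - 2\rho$ involution, since Corollary \ref{c:dualskgen} applied directly to the self-dual $\fg\mod$ already yields $(\fg\mod^{B\hspace{-.6mm}\on{-gen}})^\vee \simeq (\fg\mod^\vee)^{B\hspace{-.6mm}\on{-gen}} \simeq \fg\mod^{B\hspace{-.6mm}\on{-gen}}$, and its proof shows this self-duality exchanges the adjunction $(i, i^!)$ with itself, which is exactly the bookkeeping you flagged as the remaining obstacle.
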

\begin{proof} This follows from Corollary \ref{c:dualskgen}. \end{proof}

\subsection{Block decomposition}

\subsubsection{} We may decompose the above category using the following. 

\begin{defn} Let $\phi$ and $\chi$ be $W$-orbits in $\Lambda$. Define the 
associated category of affine Harish-Chandra bimodules to be 
	\[
	     \HCh_{\phi, \chi} := ( \fg\mod_{\phi} \otimes \hspace{.5mm} 
	     \fg\mod_{\chi})^G. 
	\]
\end{defn}
By a mild abuse of 
notation, let us  write $\fg\mod_{-\phi - 2 \rho}$ for 
the subcategory of $\fg\mod$ dual to $\fg\mod_{\phi}$, cf. Proposition 
\ref{p:gmodduals}.
With this, we may obtain the following direct sum decomposition. 

\begin{pro}\label{p:linkyhc} The direct sum of inclusions yields an equivalence 
	\[
	   \hspace{.5mm} \underset{\phi, 
	    \chi}\oplus \hspace{1.5mm} \hspace{.5mm}
	    \!\HCh{}_{\phi, \chi} \simeq \HChl, 
	\]
where $\phi$ and $\chi$ run through the set of all $W$-orbits on $\Lambda$. Moreover, given fixed orbits $\phi, \chi, \mu, \nu$, the composition 
\[
   \on{HCh}_{\phi, \chi} \otimes \hspace{.5mm} \on{HCh}_{\mu, \nu} \rightarrow \on{HCh}^{tame}
\]
vanishes unless $\mu = - \chi - 2 \rho$, in which case the functor factors through 
\[
   \on{HCh}_{\phi, \chi} \otimes \hspace{.5mm} \on{HCh}_{-\chi - 2 \rho, \nu} \rightarrow \on{HCh}_{\phi, \nu}. 
\]
\end{pro}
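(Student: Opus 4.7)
My plan is to deduce both assertions from the linkage principle (Proposition \ref{p:linkp}) combined with the self-duality of $\fg\mod$ recalled in Section \ref{ss:liealgrepsd}. For the direct sum decomposition, I would start from the $D(G)$-module equivalence
\[
     \fg\mod^{B\hspace{-.6mm} \on{-gen}} \simeq \underset{\chi}\oplus \hspace{.5mm} \fg\mod_\chi
\]
furnished by Proposition \ref{p:linkp}. Since the $\fg\mod_\chi$ are mutually orthogonal full $D(G)$-submodules of $\fg\mod^{B\hspace{-.6mm} \on{-gen}}$, this decomposition is simultaneously a coproduct and a product in $D(G)\mod$. Tensoring it with itself yields an analogous orthogonal decomposition
\[
    \fg\mod^{B\hspace{-.6mm} \on{-gen}} \otimes \fg\mod^{B\hspace{-.6mm} \on{-gen}} \simeq \underset{\phi, \chi}\oplus \hspace{.5mm} \fg\mod_\phi \otimes \fg\mod_\chi
\]
as a $D(G\times G)$-module. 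Passing to $G$-equivariants for the diagonal action commutes with such orthogonal decompositions, since invariants (as a right adjoint) preserves products and the orthogonality of summands is inherited, yielding the first claim.

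For the composition statement, I would use the monoidal equivalence $\HChl \simeq \on{Hom}_{D(G)\mod}(\fg\mod^{B\hspace{-.6mm} \on{-gen}}, \fg\mod^{B\hspace{-.6mm} \on{-gen}})$ supplied by the preceding proposition. Under this equivalence, the convolution $\star$ of bimodules corresponds to composition of endo-functors, which by the self-duality of $\fg\mod$ is computed by contracting the middle tensor factor against the evaluation pairing as in \eqref{e:dualend}. The relevant middle pairing is the semi-infinite homology pairing, which by Proposition \ref{p:gmodduals} identifies $\fg\mod_\chi^\vee \simeq \fg\mod_{-\chi-2\rho}$. Consequently its block-refined form
\[
  \fg\mod_\chi \otimes \hspace{.5mm} \fg\mod_\mu \to \Vect
\]
vanishes unless $\mu = -\chi - 2\rho$, and is the canonical evaluation pairing of dual categories otherwise. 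This immediately yields the claimed vanishing of $\on{HCh}_{\phi, \chi} \star \on{HCh}_{\mu, \nu}$ for $\mu \neq -\chi - 2\rho$; in the matched case, contracting the middle factor leaves the outer factors alone, so the result lands in $\fg\mod_\phi \otimes \fg\mod_\nu$, hence in $\on{HCh}_{\phi, \nu}$ after $G$-equivariance.

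The main technical point I anticipate is the commutation of $G$-invariants with the infinite direct sum in the first step. Since $G$-invariants only a priori preserves products, the argument relies essentially on orthogonality of the $\fg\mod_\chi$, which promotes the coproduct to a biproduct; once this is granted, the remaining steps are formal consequences of Propositions \ref{p:linkp} and \ref{p:gmodduals} and the formalism of Section \ref{ss:liealgrepsd}.
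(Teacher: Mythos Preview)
Your argument is correct, and your identification of the commutation of $G$-invariants with the infinite direct sum as the only nonformal point is exactly right. The paper, however, handles this point differently: rather than arguing that the orthogonal decomposition is simultaneously a product (so that the right adjoint $(-)^G$ preserves it), the paper simply invokes the \emph{continuity} of $G$-invariants, i.e.\ that $(-)^G$ preserves colimits, citing Appendix~D.1 of \cite{whitlocglob}. This is a nontrivial result for a loop group $G$, where invariants are defined as a totalization and are only a priori limit-preserving.

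Your route has the advantage of avoiding that external input: once one knows (via Proposition~\ref{p:linkp}) that the summands are mutually orthogonal, the projections $\fg\mod^{B\hspace{-.6mm}\on{-gen}} \to \fg\mod_\chi$ exhibit the decomposition as a product in $D(G)\mod$, and invariants preserve products for free. The paper's route is shorter once the continuity of $(-)^G$ is granted, and that result is of independent interest elsewhere in the subject. For the composition statement your argument via Proposition~\ref{p:gmodduals} and the monoidal equivalence with equivariant endofunctors is essentially what the paper has in mind as well; it says nothing more explicit there.
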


\begin{proof}This follows from Theorem \ref{t:linky} and the continuity of 
$G$ invariants, cf. Appendix D.1 of \cite{whitlocglob} for the latter. 
\end{proof}

\subsection{Blocks and affine Hecke categories}

\subsection{} Finally, we apply the localization theory developed in the previous 
sections to describe a fixed block $$\HCh_{\phi, \chi}.$$To do so, fix 
 additive characters $\psi$ and $\omega$ of $N$ corresponding to 
$\phi$ and $\chi$, respectively, cf. Section \ref{s:locII}.

\begin{theo} There are canonical equivalences \label{t:1blockhc}
	\begin{equation} \label{e:1blockhc}
	    \HCh_{\phi, \chi} \simeq \fg\mod_{\chi}^{N, \psi} \simeq \fg\mod_{-\phi 
	    - 2 \rho}^{N, -\omega} \simeq 
	    D(N, \psi, \St\hspace{-.6mm} \backslash G / N, \omega, \St).
	\end{equation}
\end{theo}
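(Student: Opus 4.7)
The proof should combine three ingredients already established: the self-duality $\fg\mod_\phi^\vee \simeq \fg\mod_{-\phi-2\rho}$ of Proposition \ref{p:gmodduals}, the fundamental identification $(\sC^\vee \otimes \sD)^G \simeq \Hom_{D(G)\mod}(\sC, \sD)$ for dualizable categorical representations (cf.~equation \eqref{e:homscats}), and the Steinberg--Whittaker localization equivalences of Theorem \ref{t:3realz} and Corollary \ref{c:3realz}.

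My plan is to first use the self-duality and the Hom-interpretation of invariants to rewrite the definition of the bimodule category as
\[
\HCh_{\phi, \chi} = (\fg\mod_\phi \otimes \fg\mod_\chi)^G \simeq \Hom_{D(G)\mod}(\fg\mod_{-\phi - 2\rho}, \fg\mod_\chi).
\]
Applying the Steinberg--Whittaker equivalence $\fg\mod_{-\phi - 2\rho} \simeq D(G/N, \psi, \St)$ (with the sign of $\psi$ adjusted using Remark \ref{r:longint}) converts the source into a Whittaker D-module category, and the right-hand side becomes by definition $\fg\mod_\chi^{N, \psi, \St}$. To upgrade to the plain Whittaker invariants $\fg\mod_\chi^{N, \psi}$ claimed in the first equivalence, I would observe that $\fg\mod_\chi$ is generated by its Borel invariants (cf.\ Section \ref{s:catliereps}), which via Remark \ref{c:tensor=hom} implies that $\Hom_{D(G)\mod}(D(G/N,\psi),\fg\mod_\chi)$ and $\Hom_{D(G)\mod}(D(G/N,\psi)^{B\hspace{-.6mm} \on{-gen}},\fg\mod_\chi)$ agree, since both are computed from $D(G/N,\psi)^B$ and $\fg\mod_\chi^B$ by tensoring with $D(G/B)$ over the Hecke category. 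The second equivalence, $\HCh_{\phi, \chi} \simeq \fg\mod_{-\phi - 2\rho}^{N, -\omega}$, follows by the symmetric argument: dualize the $\chi$ factor instead of the $\phi$ factor, and realize $\fg\mod_{-\chi - 2\rho}$ via localization.

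For the third equivalence with bi-Whittaker D-modules, the plan is to apply Steinberg--Whittaker localization to both tensor factors simultaneously. Then
\[
\HCh_{\phi, \chi} \simeq \Hom_{D(G)\mod}(D(G/N, \psi, \St), D(G/N, \omega, \St)),
\]
which by the tautological interpretation of equivariant Hom spaces between D-module categories on homogeneous spaces (cf.\ the discussion at the end of Section \ref{sss:defstwhit}) is precisely $D((N, \psi, \St) \backslash G / (N, \omega, \St))$.

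The main obstacle will be bookkeeping. One must carefully check that the Steinberg condition is automatic on the appropriate side — this is the assertion that $\fg\mod_\chi^{N,\psi} \simeq \fg\mod_\chi^{N,\psi,\St}$, relying on the generation-by-Borel-invariants property and a mild calculation using Theorem \ref{t:cuff} together with the fact that $(D(G/N, \psi)^{B\hspace{-.6mm} \on{-gen}})^B \simeq D(G/N, \psi)^B$. One must also track signs of Whittaker characters under the cohomological duality $\lambda \mapsto -\lambda - 2\rho$, absorbing the resulting character flips via the $T$-equivariant identification $D(G/N, \psi, \St) \cong D(G/N, -\psi, \St)$ of Remark \ref{r:longint}.
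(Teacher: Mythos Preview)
Your proposal is correct and follows essentially the same route as the paper: rewrite the bimodule category as $\Hom_{D(G)\mod}(\fg\mod_{-\phi-2\rho}, \fg\mod_\chi)$ via duality, apply Steinberg--Whittaker localization to one or both arguments, and then drop the Steinberg decoration using that $\fg\mod_\chi$ is generated by its Borel invariants. One small point: the paper avoids Remark \ref{r:longint} entirely by invoking Corollary \ref{c:3realz}, which directly gives $\fg\mod_{-\phi-2\rho} \simeq D(G/N, -\psi, \St)$; the sign on $\psi$ then flips naturally when passing from the source of a Hom to Whittaker invariants of the target (cf.\ \eqref{e:homoutwhit}), yielding genuinely canonical equivalences rather than ones depending on a choice of $T$-translate.
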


\begin{proof} Recall that for $D(G)$-modules $\scc$ and $\sD$, where $\scc$ is 
dualizable, we have a canonical equivalence 
	\begin{equation} \label{e:homduals}
	     (\scc^\vee \otimes \sD)^G \simeq \on{Hom}_{D(G)\mod}(\scc, \sD).
	\end{equation}
     Using  \eqref{e:homduals}, we have
	\begin{align} \label{e:substitutein}
	    \HCh_{\phi, \chi} \simeq ( \fg\mod_{\phi} \otimes 
	    \hspace{.5mm} 
	    \fg\mod_{\chi})^G \simeq  \on{Hom}_{D(G)\mod}(\fg\mod_{-\phi - 2 
	    \rho}, \fg\mod_{\chi}).  
	\end{align}
	By Theorem \ref{t:3realz} and Corollary \ref{c:3realz} we have canonical 
	$D(G)$-equivariant equivalences 
	\begin{equation}  \label{e:3realz}
	    \fg\mod_{-\phi - 2 \rho} \simeq D(G/N, - \psi, \St) \quad \quad 
	    \text{and} \quad \quad \fg\mod_{\chi} \simeq D(G/N, \omega, \St).
	\end{equation}
	In addition, note that for any $D(G)$-module $\sS$ generated by its 
	$B$ invariants, we have by adjunction and Corollary \ref{c:pindgben}
	\begin{align} \label{e:homstinvs}
	    &\on{Hom}_{D(G)\mod}( D(G/N, -\psi, \St), \sS) \simeq \sS^{N, \psi} \\ 
	    \label{e:homstinvs2}  
	     &\on{Hom}_{D(G)\mod}( \sS, D(G/N, \omega, \St)) 
	     \simeq 
	     \sS_{N, - \omega} \simeq \sS^{N, - \omega}. 
	\end{align}
	Substituting the equivalences of \eqref{e:3realz} into 
	\eqref{e:substitutein} and then using 
	\eqref{e:homstinvs}-\eqref{e:homstinvs2} 
	yields the four 
	desired equivalences.  
\end{proof}

We finish this subsection with a few remarks. 

\begin{re}\label{r:monoidaleq}Note that, in the special case of  $\chi = - \phi - 2 \rho$, the 
proof of Theorem \ref{t:1blockhc} gives a monoidal  equivalence 
	\[
	    \on{HCh}_{\phi, -\phi - 2\rho} \simeq D(N, \psi, \St\hspace{-.6mm} \backslash G / N, - 
	    \psi, \St)
	\]
	If $G$ is of finite type and $\phi$ a regular 
	orbit, this is essentially due to Beilinson--Ginzburg by a different 
	argument, cf. Section 5 of \cite{beilinsonginzburg} and compare with Remark 
	\ref{r:longint}. More generally, Theorem \ref{t:1blockhc} yields a monoidal equivalence 
	\begin{align*}
	   \on{HCh}^{tame} \simeq \on{Hom}_{D(G)\mod}(\fg\mod^{B\hspace{-.6mm} \on{-gen}}, \fg\mod^{B\hspace{-.6mm} \on{-gen}}) &\simeq \underset{\phi, \chi}{\oplus} \hspace{1mm} \on{Hom}_{D(G)\mod}(\fg\mod_\phi, \fg\mod_\chi) \\ & \simeq \underset{\phi, \chi} \oplus  \hspace{1mm} D(N, \psi, \St \hspace{-.6mm}\backslash G / N, \omega, \St),
	\end{align*}
i.e. of all tame Harish-Chandra bimodules with an algebroid of bi--Whittaker sheaves. 
\end{re}

\begin{re} One may use Remark \ref{r:strictcc} to construct categories of 
Harish-Chandra bimodules with `strict infinitesimal characters'. For example, if we 
set 
	\[
	   \on{HCh}_{\overline{\phi}, \chi} := (\fg\mod_{\overline{\phi}} \otimes 
	   \hspace{.5mm} \fg\mod_{\chi})^G,
	\]
	then if $\phi$ is a regular orbit a variant of the proof of Theorem 
	\ref{t:1blockhc} shows that 
	\[
	  \on{HCh}_{\overline{\phi}, \chi} \simeq \fg\mod_{\chi}^B.
	\]
	If $G$ is of finite type, this recovers an important theorem due to 
	Bernstein--Gelfand \cite{begel}. 
\end{re}

\begin{re} Suppose $\phi$ and $\chi$ are at levels of opposite sign, so that they correspond to intertwining operators between categories of $\fg$-modules at levels of the same sign. 

Under this hypothesis, Theorem \ref{t:1blockhc} exchanges an appropriately defined category of projective functors, i.e. Harish-Chandra bimodules built from tensoring with integrable modules for the affine Lie algebra, with the category of free-monodromic tilting bi-Whittaker sheaves. This, as well its relation to the Kazhdan--Lusztig fusion and some conjectures of Frenkel--Malikov, will be explained in a sequel to this paper. 
\end{re}

\bibliographystyle{amsalpha}
\bibliography{samplez}

\end{document}